\documentclass[11pt]{amsart}
\usepackage{amsmath}
\usepackage{amssymb}
\usepackage{amsthm}
\usepackage{amscd}
\usepackage{amsxtra}
\usepackage{verbatim}
\usepackage{xcolor}
\usepackage{color}
\usepackage{enumerate}
\usepackage{mathrsfs}

\usepackage[colorlinks=true, bookmarks=true,pdfstartview=FitV, linkcolor=blue, citecolor=blue, urlcolor=blue]{hyperref}

\numberwithin{equation}{section}

\RequirePackage{geometry}
\geometry{twoside,
 paperwidth=210mm,
 paperheight=297mm,
 textheight=622pt,
 textwidth=468pt,
 centering,
 headheight=50pt,
 headsep=12pt,
 footskip=18pt,
 footnotesep=24pt plus 2pt minus 12pt,
 columnsep=2pc
 }

\newtheorem{theorem}{Theorem}[section]
\newtheorem{lemma}[theorem]{Lemma}

\newtheorem{proposition}[theorem]{Proposition}

\theoremstyle{definition}

\newtheorem{remark}[theorem]{Remark}

\theoremstyle{remark}

\newcommand{\floor}[1]{\left\lfloor #1 \right\rfloor}
\DeclareMathOperator*{\essinf}{ess\,inf}
\DeclareMathOperator*{\esssup}{ess\,sup}
\DeclareMathOperator{\supp}{\mathrm{supp}}
\newcommand{\Sp}{\mathcal S}

% commands for variable Lebesgue spaces

\newcommand{\pp}{{p(\cdot)}}

\newcommand{\Lp}{L^{p(\cdot)}}
\newcommand{\Pp}{\mathcal P}
\newcommand{\qq}{{q(\cdot)}}

\newcommand{\R}{\mathbb R}
\newcommand{\subRn}{{{\mathbb R}^n}}
\newcommand{\B}{{\mathcal B}}
\newcommand{\F}{{\mathcal F}}

\def\Xint#1{\mathchoice 
{\XXint\displaystyle\textstyle{#1}}% 
{\XXint\textstyle\scriptstyle{#1}}% 
{\XXint\scriptstyle\scriptscriptstyle{#1}}% 
{\XXint\scriptscriptstyle\scriptscriptstyle{#1}}% 
\!\int} 
\def\XXint#1#2#3{{\setbox0=\hbox{$#1{#2#3}{\int}$} 
\vcenter{\hbox{$#2#3$}}\kern-.5\wd0}}

\def\avgint{\Xint-}

\allowdisplaybreaks

\begin{document}

\title[Multilinear Calder\'on-Zygmund operators on weighted and Variable
Hardy spaces]{The boundedness of multilinear Calder\'on-Zygmund
  operators on weighted and variable Hardy spaces}

\author[Cruz-Uribe]{David Cruz-Uribe, OFS}
\address{Department of Mathematics, University of Alabama, Tuscaloosa, AL 35487}
\email{dcruzuribe@ua.edu}

\author[Moen]{Kabe Moen}
\address{Department of Mathematics, University of Alabama, Tuscaloosa, AL 35487}
\email{kabe.moen@ua.edu}

\author[Nguyen]{Hanh Van Nguyen}
\address{Department of Mathematics, University of Alabama, Tuscaloosa, AL 35487}
%\address{Department of Mathematics, Hue College of Education, Hue, Vietnam}
\email{hvnguyen@ua.edu}

\subjclass[2010]{42B25, 42B30, 42B35}

\keywords{Muckenhoupt weights, weighted Hardy spaces, variable Hardy spaces, multilinear
  Calder\'on-Zygmund operators, singular integrals}

\thanks{The first author is supported by NSF Grant DMS-1362425 and research funds from the
  Dean of the College of Arts \& Sciences, the University of Alabama. The second author is supported by the Simons Foundation.}

\date{July 20, 2017}

\begin{abstract}
We establish the boundedness of the multilinear Calderon-Zygmund
operators from a product of weighted Hardy spaces into a weighted
Hardy or Lebesgue space.  Our results generalize to the weighted
setting results obtained by Grafakos and Kalton~\cite{GK01} and recent
work by the third author, Grafakos, Nakamura, and
Sawano~\cite{GNNS17A}.   As part of our proof we provide a finite atomic
decomposition theorem for weighted Hardy spaces, which is interesting
in its own right.  As a consequence of our weighted results, we prove
the corresponding estimates on variable Hardy spaces.  Our main tool
is a multilinear extrapolation theorem that generalizes a result of
the first author and Naibo~\cite{CN16}.
\end{abstract}

\maketitle

\section{Introduction}
In this paper we study the boundedness of multilinear Calder\'on-Zygmund operators ($m$-CZOs) on
products of weighted and variable Hardy spaces.   More precisely, we
are interested in the following operators.  Let
$K(y_0,y_1,\ldots,y_m)$  be a kernel that  is defined away from the diagonal $y_0=y_1=\cdots =y_m$ in $(\mathbb{R}^{n})^{m+1}$ and satisfies the smoothness condition
 \begin{equation}\label{eqn:kernelregularity}
 \big| \partial^{\alpha_0}_{y_0}\cdots  \partial^{\alpha_m}_{y_m}
 K(y_0,y_1,\ldots,y_m)\big|
 \le A_{\alpha_0,\ldots,\alpha_m}\Big(
 \sum_{k,l=0}^{m}|y_k-y_l|
 \Big)^{-(mn+|\alpha_0|+\cdots+|\alpha_m|)}
 \end{equation}
 for all $\alpha=(\alpha_0,\ldots,\alpha_m)$ such that
 $|\alpha|=|\alpha_0|+\cdots+|\alpha_m|\leq N$, where $N$ is a
 sufficiently large integer.  An $m$-CZO is a multilinear operator $T$ that
 satisfies
 $$T:L^{q_1}(\mathbb R^n)\times\cdots\times L^{q_m}(\mathbb R^n)\rightarrow L^q(\mathbb R^n)$$
for some $1<q_1,\ldots,q_m<\infty$ and 
$\frac1q=\frac1{q_1}+\cdots+\frac{1}{q_m}$,  and $T$ has the integral representation  
$$T(f_1,\ldots,f_m)(x)=\int_{\mathbb (\mathbb R^n)^m}K(x,y_1,\ldots,y_m)f(y_1)\cdots f(y_m)\,dy_1\cdots dy_m$$
whenever $f_i\in L^\infty_c(\mathbb R^n)$ and $x\notin \cap_i \supp(f_i)$.

Multilinear CZOs were introduced by Coifman and Meyer
\cite{CM75,CM78} in the 1970s and were systematically studied by
Grafakos and Torres \cite{GT02}.  They showed that $m$-CZOs are bounded
from
$L^{p_1}(\mathbb R^n)\times\cdots\times L^{p_m}(\mathbb
R^n)\rightarrow L^p(\mathbb R^n)$, for any $1<p_1,\ldots,p_m<\infty$
and $p$ defined by $\frac1p=\frac{1}{p_1}+\cdots+\frac{1}{p_m}$.
Further, $m$-CZOs satisfy weak endpoint bounds when $p_i=1$ for some $i$.
For Lebesgue space bounds, it is sufficient to take $N=1$
in~\eqref{eqn:kernelregularity} and in fact weaker regularity
conditions are sufficient.
Bounds for $m$-CZOs from products of Hardy spaces into Lebesgue spaces
were proved by Kalton and Grafakos~\cite{GK01} (see also Grafakos and
He~\cite{GH14}).  As in the linear case, more regularity is required on
the operators:  in this case, $N\geq s= {\lfloor
n(\frac{1}{p}-1)\rfloor}_+$ where $x_+=\max(0,x)$.  Very recently, bounds into Hardy spaces were proved
by the third author, Grafakos, Nakamura and Sawano~\cite{GNNS17A}.
To map into Hardy spaces the kernel $K$ must
satisfy~\eqref{eqn:kernelregularity} for 
\[ N > s + \max\bigg\{ \bigg\lfloor mn\Big(\frac{1}{p_k}-1\Big) \bigg\rfloor_+ : 1\leq k \leq m
  \bigg\}+mn. \]
Moreover, in the multilinear case the operator $T$ must satisfy an
additional cancelation condition:
\begin{equation}
\label{eq.can}
\int x^{\alpha}T(a_1,\ldots,a_m)(x)\, dx=0,
\end{equation}
for $|\alpha|\leq s$ and all $(p_k,\infty,N)$ atoms $a_k$.   For linear
CZOs of convolution type, this condition holds automatically:
see~\cite[Lemma~2.1]{GNNS17B}.  An example of a bilinear CZO that
satisfies this cancelation condition is $T=R_1+R_2$, where $R_i$
is the bilinear Riesz transform
\[ R_i(f,g)(x) = \mathrm{pv} \int_{\R}\int_{\R} 
\frac{x-y_i}{|(x-y_1,x-y_2)|^3} 
f(y_1)g(y_2)\,dy_1dy_2. \]
Somewhat surprisingly, neither Riesz transform itself has sufficient
cancellation.  For more examples of convolution-type multilinear
operators that do and do not satisfy this cancelation condition,
see~\cite{GNNS17A,GNNS17B}.\footnote{We note in passing that the
  results for $m$-CZOs in~\cite{GNNS17A} are stated for convolution
  type operators, but as the authors note (see Remark~3.4), their results
  extend to non-convolution type $m$-CZOs.}

\medskip

Weighted norm inequalities for multilinear operators were first
considered by Grafakos and Torres~\cite{GrT02}.  Later, Lerner,
{\em et al.}~\cite{LOPTT09}
characterized the weighted inequalities for $m$-CZOs using a
multilinear generalization of the Muckenhoupt $A_p$ condition.
Weighted Hardy spaces were introduced by Garc\'ia-Cuerva~\cite{GC79}.
A complete treatment of weighted Hardy spaces is due to Str\"omberg
and Torchinsky~\cite{ST89}; they proved that (linear)
Calder\'on-Zygmund operators whose kernels have enough regularity map
$H^p(w)$ into $L^p(w)$ or $H^p(w)$, for $0<p<\infty$ and for weights
$w\in A_\infty$. 

Our goal is to generalize the results of Str\"omberg and
Torchinsky to $m$-CZOs.  To state them, we first define some notation.
To do so we rely on some (hopefully) well-known concepts; complete
definitions will be given below.  
%
% It is well known (see \cite{GT02}) A weight is a non-negative
% a.e. function that is locally integrable.  Given $p>1$ we say a
% weight belongs to the Muckenhoupt class $A_p$ if there is a positive
% finite constant $C$ such that
% $$\left(\frac1{|Q|}\int_Q w(x) \,dx\right)\left(\frac{1}{|Q|}\int_Q w(x)^{1-p'}\,dx\right)^{p-1}\leq C$$
% for all cubes $Q$ in $\mathbb R^n$, where $p'$ denotes the conjugate exponent of $p$ defined by $p'=\frac{p}{p-1}$.  An $A_1$ weight is a weight that satisfies
% $$\left(\frac1{|Q|}\int_Q w(x) \,dx\right)\leq C(\inf_{Q} w)$$
% for some constant $C>0$ and all cubes $Q$ in $\mathbb R^n$.  Finally,
% we will define the class $A_\infty$ as the union over all $A_r$ for
% $r\geq 1$.  
%
Given $w\in A_\infty$, we define
\[
r_w = \inf\{r\in (1,\infty) : w\in A_r\}
\]
and for $0<p<\infty$ we define the critical index $s_w$ of $w$ by 
\[  s_w = \floor{n\Big(\frac {r_w}{p}-1\Big)}_+.  \]
%
%
% Let $0<p<\infty$ and $w\in A_{\infty}(\mathbb{R}^n)$.
% A bounded function $a$ is said to be a $(p,\infty,w)$-atom if $a$ is supported on some cube $Q$ and
% satisfies
% \begin{equation}
% \label{eq.atom}
% \|a\|_{L^{\infty}}
% \leq
% 1,
% \quad
% \int_{\mathbb{R}^n}
% x^\alpha a(x)dx=0
% \end{equation}
% for all $|\alpha| \le N$, where $N$ is a fixed integer and $N\ge s_w$.

Our first result gives the boundedness of $m$-CZOs into weighted Lebesgue spaces.

\begin{theorem}  \label{thm:1}
Given an integer $m\ge 1$, $0<p_1,\ldots,p_m<\infty$,  and $w_k\in A_\infty$, $1\le k\le m$,
let $T$ be an $m$-CZO associated to a kernel $K$ that satisfies
\eqref{eqn:kernelregularity} for $N$ such that
\begin{equation}
\label{eq.N1}
N\ge \max\bigg\{\floor{mn\Big(\frac{r_{w_k}}{p_k}-1\Big)}_+,1\le k\le m\bigg\}+(m-1)n.
\end{equation}
Then 
$$
T: H^{p_1}(w_1)\times \cdots \times H^{p_m}(w_m)\to L^{p}(\overline{w}),
$$
where $\overline{w} = \prod_{k=1}^{m}w_k^{\frac{p}{p_k}}$ and
\[
\frac1p=\frac1{p_1}+\cdots+\frac1{p_m}.
\]
\end{theorem}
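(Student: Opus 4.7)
The strategy is the classical one for Hardy spaces: reduce to uniform bounds on tuples of atoms via the finite atomic decomposition of weighted Hardy spaces (which the paper establishes as a preliminary result), then handle the atomic estimate by splitting $\R^n$ into a local and a global region. By the atomic decomposition, it suffices to show a uniform bound
\[
\|T(a_1,\ldots,a_m)\|_{L^p(\bar w)} \lesssim \prod_{k=1}^{m} \|a_k\|_{H^{p_k}(w_k)}
\]
for all $m$-tuples of $(p_k,\infty,N)$-atoms $a_k$ supported in balls $B_k = B(c_k,r_k)$, satisfying $\|a_k\|_\infty \le w_k(B_k)^{-1/p_k}$ and the moment condition $\int y^\alpha a_k(y)\,dy = 0$ for $|\alpha| \le s_{w_k}$. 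The quasi-subadditivity of $\|\cdot\|_{L^p(\bar w)}^{\min(p,1)}$ then assembles these atomic bounds into the full estimate.

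For the local region, let $B^*$ be a common enlargement of the $B_k$, for instance the smallest ball containing suitable dilates of each $B_k$. On $B^*$, apply H\"older's inequality together with weighted multilinear Lebesgue bounds for $T$ --- obtained from Lerner \emph{et al.}~\cite{LOPTT09} or, more flexibly, via the multilinear extrapolation theorem advertised in the abstract. The idea is to choose exponents $q_k$ slightly larger than $r_{w_k}$ so that each $w_k$ lies in the relevant multilinear Muckenhoupt class; the $L^\infty$ size of the atoms combined with these Lebesgue bounds, together with a reverse H\"older comparison between $\bar w(B^*)$ and $\prod_k w_k(B_k)^{p/p_k}$, then yields the desired estimate on $B^*$.

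For the global region $\R^n \setminus B^*$, exploit the kernel regularity~\eqref{eqn:kernelregularity} together with the moment conditions on the atoms. For each $a_k$, subtract from $K(x,y_1,\ldots,y_m)$ its Taylor polynomial in the variable $y_k$ about $c_k$ of degree $s_{w_k}$; the moment condition on $a_k$ cancels this polynomial, and the remainder carries a decay of order $r_k^{s_{w_k}+1}$ times a negative power of the relevant distance $|x-c_k|$. The hypothesis~\eqref{eq.N1} on $N$ is precisely what is needed to perform these Taylor subtractions simultaneously in all $m$ variables.

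The main obstacle is the global estimate. Because the balls $B_k$ can have very different sizes and positions, one must order the Taylor subtractions carefully --- roughly, starting from the smallest ball --- and each iteration involves differentiating $K$ in one variable while the others are already off-center, which is why \eqref{eq.N1} demands the additional $(m-1)n$ beyond the individual moment orders $\lfloor mn(r_{w_k}/p_k-1)\rfloor_+$. The resulting decay in the various $|x-c_k|$ must then be combined with the $A_\infty$ / reverse H\"older properties of each $w_k$ to produce an integrable bound against $\bar w$; this bookkeeping, together with tracking how the product weight $\bar w$ interacts with the separate decay factors coming from each atom, is the delicate step of the proof.
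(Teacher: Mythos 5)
The central reduction in your proposal is not sound. You claim that, after the finite atomic decomposition, it ``suffices to show a uniform bound'' $\|T(a_1,\ldots,a_m)\|_{L^p(\overline w)}\lesssim\prod_k\|a_k\|_{H^{p_k}(w_k)}$ on tuples of atoms, and that ``quasi-subadditivity of $\|\cdot\|_{L^p(\overline w)}^{\min(p,1)}$ assembles these atomic bounds into the full estimate.'' In the \emph{multilinear} setting this assembly step fails even with a finite decomposition. Writing $f_k=\sum_{j_k}\lambda_{k,j_k}a_{k,j_k}$ and applying the $p$-quasi-subadditivity to the $m$-fold sum yields
\[
\|T(f_1,\ldots,f_m)\|_{L^p(\overline w)}^p \;\lesssim\; \sum_{j_1,\ldots,j_m}\prod_{k=1}^m\lambda_{k,j_k}^{\,p}\;=\;\prod_{k=1}^m\sum_{j_k}\lambda_{k,j_k}^{\,p}.
\]
Since $\frac1p=\sum_k\frac1{p_k}$ forces $p<p_k$, the quantity $\sum_{j_k}\lambda_{k,j_k}^{\,p}$ is strictly \emph{larger} than $\big(\sum_{j_k}\lambda_{k,j_k}^{\,p_k}\big)^{p/p_k}\approx\|f_k\|_{H^{p_k}(w_k)}^p$, i.e.\ the inequality goes the wrong way; you cannot conclude the product bound by $\prod_k\|f_k\|_{H^{p_k}(w_k)}$. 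This is precisely why the paper does not prove a uniform atom estimate and then sum. Instead it proves the stronger inequality
\[
\|T(f_1,\ldots,f_m)\|_{L^p(\overline w)}\lesssim\prod_{k=1}^m\Big\|\sum_{j_k}\lambda_{k,j_k}\chi_{Q_{k,j_k}}\Big\|_{L^{p_k}(w_k)},
\]
keeping the coefficients and cube indicators together, and closes the argument with the weighted Fefferman--Stein vector-valued maximal inequality and the averaging operator lemma (Lemma~\ref{lm:AvgLpw}), combined with H\"older's inequality to split the product across the $L^{p_k}(w_k)$ norms. Your scheme has no mechanism to produce the correct $\ell^{p_k}$-type accounting of the coefficients.

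Two secondary issues. For the local piece, you propose invoking multilinear $L^p$ weighted bounds from Lerner \emph{et al.}~\cite{LOPTT09}, which would require the tuple $(w_1,\ldots,w_m)$ (or the resulting pair) to lie in a multilinear $A_{\vec p}$ class; but the hypotheses here only give $w_k\in A_\infty$, so that route is not available without further work. The paper instead observes (Lemma~\ref{lm:TL2w}, via sparse domination) that $T$ maps $L^q(\overline w)\times L^\infty\times\cdots\times L^\infty\to L^q(\overline w)$ whenever $\overline w\in A_q$, which is exactly what is needed since $\overline w\in A_\infty$ by Lemma~\ref{lemma:Ainfty}. For the global piece, you describe iterated Taylor subtraction in all $m$ variables and attribute the $(m-1)n$ in~\eqref{eq.N1} to that. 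In fact the paper's Lemma~\ref{lm.4A1} subtracts the Taylor polynomial in only \emph{one} variable (the one attached to the smallest cube in the relevant index set $\Lambda$), obtaining decay $\prod_k M(\chi_{Q_k})^{(n+N+1)/(mn)}$; the condition~\eqref{eq.N1} is then used to guarantee that $(n+N+1)/(mn)>r_{w_k}/p_k$, so that the vector-valued maximal inequality can absorb the powers of $M$ in $L^{p_k}(w_k)$.
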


Our second result gives boundedness of $m$-CZOs into weighted Hardy spaces.

\begin{theorem}
\label{thm:2}
Given $p,p_1,\ldots,p_m,\overline{w},w_1,\ldots,w_m$ and $T$ as in
Theorem \ref{thm:1}, suppose  the kernel $K$ satisfies
\eqref{eqn:kernelregularity} for $N$ such that 
\begin{equation}
\label{eq.N2}
N> s_{\overline{w}}+\max\bigg\{\floor{mn\Big(\frac{r_{w_k}}{p_k}-1\Big)}_+,1\le k\le m\bigg\}+mn.
\end{equation}
Suppose further that $T$ satisfies the cancellation condition \eqref{eq.can}
for all $|\alpha|\le s_{\overline{w}}$,
where for $1\leq k \leq m$,  $a_k$ is an $(N,\infty)$ atom:  i.e.,
$a_k$ is supported on a cube $Q_k$, $\|a_k\|_\infty\leq 1$, and 
\begin{equation}
\label{eq.atom}
\int_{\mathbb{R}^n}  x^\beta a_k(x)dx=0
\end{equation}
for all $|\beta|\le N$.
Then
$$
T: H^{p_1}(w_1)\times \cdots \times H^{p_m}(w_m)\to H^{p}(\overline{w}).
$$
\end{theorem}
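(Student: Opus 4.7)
The plan is to reduce the theorem to an atom-wise estimate using the weighted finite atomic decomposition (established earlier in the paper), then to recognize $T$ applied to a single tuple of weighted atoms as a constant multiple of an $H^p(\overline{w})$-molecule. Concretely, for each $f_k\in H^{p_k}(w_k)$ we expand $f_k=\sum_{j}\lambda_{k,j}a_{k,j}$, where each $a_{k,j}$ is a weighted $(p_k,\infty,N)$-atom supported in a cube $Q_{k,j}$, with $\|a_{k,j}\|_\infty\le w_k(Q_{k,j})^{-1/p_k}$ and vanishing moments through order $N$. After the normalization $\tilde a_{k,j}=w_k(Q_{k,j})^{1/p_k}a_{k,j}$, each $\tilde a_{k,j}$ is an $(N,\infty)$-atom as defined in the hypothesis, so the assumed cancellation \eqref{eq.can} applies to $T(\tilde a_{1,j_1},\dots,\tilde a_{m,j_m})$.

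The heart of the proof is the estimate on a single $T(a_1,\dots,a_m)$. Let $Q^*$ be a suitable dilate of the smallest cube containing all $Q_k$. The local bound on $Q^*$ comes from Theorem~\ref{thm:1} applied to the normalized atoms, combined with H\"older's inequality and the standard $A_\infty$ comparison $\overline w(Q^*)\lesssim\prod_k w_k(Q_k)^{p/p_k}$ up to a controlled power. For the far-field decay one expands the kernel $K(x,y_1,\dots,y_m)$ in a Taylor polynomial of degree $N$ about the centers of the $Q_k$ in each variable $y_k$ separately; the vanishing moments of the atoms annihilate the polynomial part, and \eqref{eqn:kernelregularity} together with the choice of $N$ in \eqref{eq.N2} yields pointwise decay of order $|x-x_{Q^*}|^{-(mn+s_{\overline w}+1)}$, modulo the appropriate sidelengths. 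The vanishing-moment condition on $T(a_1,\dots,a_m)$ itself up to order $s_{\overline w}$ is precisely \eqref{eq.can}, which is applicable because $N\ge s_{\overline w}$. Together these yield $T(a_1,\dots,a_m)=c\cdot M$ with $M$ an $H^p(\overline w)$-molecule and $c$ of the expected size.

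Once the atom-wise bound is in hand, the full theorem is assembled as follows: by multilinearity,
\[
T(f_1,\dots,f_m)=\sum_{j_1,\dots,j_m}\Big(\prod_{k=1}^m\lambda_{k,j_k}\Big)T(a_{1,j_1},\dots,a_{m,j_m}),
\]
and the $H^p(\overline w)$-quasinorm is controlled by an $\ell^{\underline p}$ inequality on the resulting molecular sum, with $\underline p=\min(p,1)$. The use of the finite atomic decomposition (as opposed to a merely distributional one) guarantees that the termwise interchange of $T$ with the sum over atoms is legitimate. The multi-indexed sum then factors into a product of one-variable sums, each bounded by the atomic quasinorm of $f_k$ in $H^{p_k}(w_k)$; a density argument extends the bound to all $f_k$ in their respective weighted Hardy spaces.

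The main obstacle lies in the second step. Because the cubes $Q_k$ may have very different sizes and centers, choosing the correct reference point for the Taylor expansion and producing decay uniformly in all $m$ variables is delicate; the kernel regularity must be strong enough to compensate along every variable simultaneously. This is what forces the stronger regularity in \eqref{eq.N2}, and is the only step where the critical index $s_{\overline w}$ of the product weight, as opposed to the individual $s_{w_k}$, enters the argument.
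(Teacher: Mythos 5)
Your proposal takes a genuinely different route from the paper, and as written it has a gap. The paper does not build a molecular theory; it estimates the non-tangential maximal function $M_\phi T(f_1,\ldots,f_m)$ pointwise via Lemma~\ref{lm.mphi}, controlling $M_\phi T(a_1,\ldots,a_m)$ off a dilate of the \emph{smallest} atom cube by products of Hardy--Littlewood maximal functions of the $\chi_{Q_k}$, and then closes the estimate with the averaging Lemma~\ref{lm:AvgLpw} and the weighted Fefferman--Stein vector-valued inequality (Remark~\ref{FS-argument}). The product-of-maximal-functions form of the estimate is precisely what lets H\"older's inequality separate the $L^p(\overline w)$ norm into the factors $\|\sum_{j_k}\lambda_{k,j_k}\chi_{Q_{k,j_k}}\|_{L^{p_k}(w_k)}$, which Theorem~\ref{lm.fin.dec} controls by $\|f_k\|_{H^{p_k}(w_k)}$.

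The molecular route you sketch meets concrete obstructions. The correct far-field bound, established in Lemma~\ref{lm.4A1}, is
\[
|T(a_1,\ldots,a_m)(y)|\;\lesssim\;\frac{\min_{k}\ell(Q_k)^{\,n+N+1}}{\big(\sum_{k}|y-c_k|\big)^{\,n+N+1}},
\]
so the decay rate is $-(n+N+1)$, not the $-(mn+s_{\overline w}+1)$ you claim (the $mn$ belongs to the kernel, and integrating out the $(m-1)n$ remaining variables removes it), and the prefactor is governed by the \emph{smallest} cube, not by a cube $Q^*$ containing all of them. When the $Q_k$ scatter, $T(a_1,\ldots,a_m)$ is at best a molecule centered at $Q^*$ whose molecular norm depends on the joint geometry of the entire tuple $(Q_1,\ldots,Q_m)$ and does not factor into per-index quantities; see Lemma~\ref{lm.4A4}, where even the local $L^q$ bound already produces $w(Q_1)^{1/q}\prod_k\inf_{Q_1}M(\chi_{Q_k})^{(n+N+1)/(mn)}$. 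As a result the $\ell^{\underline p}$-assembly cannot separate as you propose: in a sum $\sum_{j_1,\ldots,j_m}\prod_k\lambda_{k,j_k}^{\underline p}\,c(j_1,\ldots,j_m)^{\underline p}$ the constants $c(j_1,\ldots,j_m)$ do not factor as $\prod_k d_k(j_k)$; and even if they did, the single exponent $\underline p=\min(p,1)$ on the molecular side does not match the individual exponents $p_k$ required to reassemble $\prod_k\|f_k\|_{H^{p_k}(w_k)}$. These are exactly the points that force the proof through the maximal-function/vector-valued machinery rather than through a termwise molecular estimate.
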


\begin{remark}
In Theorems~\ref{thm:1} and~\ref{thm:2}, if all the weights $w_k=1$, then $r_{w_k}=1$, so we recapture the
unweighted results in~\cite{GK01,GNNS17A}.
\end{remark}

\begin{remark}
If $p>1$ and $w \in A_p$, then $H^p(w)=L^p(w)$ (see~\cite{ST89}).
Therefore, in Theorems~\ref{thm:1} and~\ref{thm:2},  if $w_k\in
A_{p_k}$, then we can replace $H^{p_k}(w_k)$ by $L^{p_k}(w_k)$ in the conclusion.
\end{remark}

\begin{remark}
Implicit in the statement of Theorem~\ref{thm:2} is the assumption
that $\overline{w} \in A_\infty$.  However, this is always the case:
see Lemma~\ref{lemma:Ainfty} below.
\end{remark}

\begin{remark}
Earlier, Xue and Yan \cite{XY12} proved a version of
Theorem~\ref{thm:1} with the additional  restriction that $0<p_k\le 1$
for all $1\le k\le m$.  We want to thank the authors for calling our
attention to their paper, which we had overlooked.
\end{remark}

\bigskip

Our next pair of results are the analogs of Theorems~\ref{thm:1}
and~\ref{thm:2} for the variable Lebesgue spaces.  The variable
Lebesgue spaces are a generalization of the classical $L^p$ spaces
with the exponent $p$ replaced by a measurable exponent function $\pp : \R^n \rightarrow
(0,\infty)$.  It consists of all measurable functions $f$ such that
for some $\lambda>0$. 
\[ \rho(f/\lambda) = \int_{\R^n}
  \left(\frac{|f(x)|}{\lambda}\right)^{p(x)}\,dx < \infty.  \]
This becomes a quasi-Banach space with quasi-norm
\[ \|f\|_\pp = \inf\{ \lambda >0 : \rho(f/\lambda)\leq 1 \}.  \]
If $p(x)\geq 1$ a.e., then this is a norm and $\Lp$ is a Banach
space.  These spaces were introduced by Orlicz~\cite{Orlicz} in 1931,
and have been extensively studied by a number of authors in the past
25 years.  For complete details and references, see~\cite{CF13}.
Variable Hardy spaces were introduced by the first author and
Wang~\cite{CW14} and independently by Nakai and Sawano~\cite{NS12}.

In variable Lebesgue exponent spaces,
harmonic analysis requires some assumption of regularity on the
exponent function $\pp$.  A common assumption that is sufficient for
almost all applications is that the exponent function is log-H\"older
continuous both locally and at infinity.  More precisely, there exist
constants $C_0$, $C_\infty$ and $p_\infty$ such that
\begin{equation} \label{eqn:log-holder1}
 |p(x)-p(y)| \leq \frac{C_0}{-\log(|x-y|)}, \qquad
  0<|x-y|<\frac{1}{2}, 
\end{equation}
and
\begin{equation} \label{eqn:log-holder2}
 |p(x) - p_\infty| \leq \frac{C_\infty}{\log(e+|x|)}. 
\end{equation}
Finally, given an exponent function $\pp$, we define
\[ p_- = \essinf_{x\in \R^n} p(x), \qquad p_+ = \esssup_{x\in \R^n}
  p(x). \]

As an immediate application of Theorems~\ref{thm:1} and~\ref{thm:2},
and multilinear Rubio de Francia extrapolation in the scale of
variable Lebesgue spaces, we get the following two results.

\begin{theorem}
\label{thm:3}
Given an integer $m\ge 1$, let $p_1,\ldots,p_m$ be real numbers, and
let  $q_1(\cdot),\ldots,q_m(\cdot)$ be
log-H\"older continuous
exponent functions such that $0<p_k<(q_k)_-\leq (q_k)_+<\infty$.  Define
\[
\frac1{q(\cdot)}
=
\frac1{q_1(\cdot)}
+\cdots
+
\frac1{q_m(\cdot)},
\qquad
\frac1{p}=\frac1{p_1}+\cdots+\frac1{p_m}.
\]
Let $T$ be an $m$-CZO as in Theorem \ref{thm:1} satisfying \eqref{eqn:kernelregularity} for all $|\alpha|\le N$ with
\[
N\ge \max\bigg\{\floor{mn\Big(\frac{1}{p_k}-1\Big)}_+,1\le k\le m\bigg\}+(m-1)n.
\]
Then 
$$
T: H^{q_1(\cdot)}\times \cdots \times H^{q_m(\cdot)}\to L^{q(\cdot)}.
$$
\end{theorem}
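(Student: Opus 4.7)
The plan is to deduce Theorem~\ref{thm:3} from Theorem~\ref{thm:1} by applying a multilinear variable-Lebesgue extrapolation theorem of the type the paper promises (generalizing~\cite{CN16}). In such an extrapolation argument the weights that arise are Rubio de Francia iterates that may be taken to lie in $A_1$; for these, $r_{w_k}=1$, and the regularity condition~\eqref{eq.N1} of Theorem~\ref{thm:1} collapses exactly to the hypothesis on $N$ imposed here. Thus Theorem~\ref{thm:1} furnishes the base estimate
\[
\|T(f_1,\ldots,f_m)\|_{L^p(\overline w)}\le C\prod_{k=1}^{m}\|f_k\|_{H^{p_k}(w_k)}
\]
for every $m$-tuple of $A_1$ weights, with the constant depending only on their $A_1$ characteristics.

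Next I would reformulate this estimate in purely Lebesgue-space terms by invoking the grand maximal-function characterization of Hardy norms. For $w\in A_\infty$, $\|f\|_{H^{p_k}(w_k)}\simeq\|M_N f\|_{L^{p_k}(w_k)}$, and under log-H\"older regularity~\eqref{eqn:log-holder1}--\eqref{eqn:log-holder2}, $\|f\|_{H^{q_k(\cdot)}}\simeq\|M_N f\|_{L^{q_k(\cdot)}}$. Setting $g_k=M_N f_k$, the base estimate becomes a multilinear Lebesgue inequality to which the variable-Lebesgue extrapolation machinery applies. The hypothesis $p_k<(q_k)_-\le(q_k)_+<\infty$ together with log-H\"older continuity guarantees that the Hardy--Littlewood maximal operator is bounded on each $L^{(q_k(\cdot)/p_k)'}$, which is precisely what makes the Rubio de Francia iteration converge in each slot. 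The extrapolation then yields
\[
\|T(f_1,\ldots,f_m)\|_{L^{q(\cdot)}}\le C\prod_{k=1}^{m}\|g_k\|_{L^{q_k(\cdot)}}\simeq C\prod_{k=1}^{m}\|f_k\|_{H^{q_k(\cdot)}},
\]
which is the desired conclusion.

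The main obstacle is not in Theorem~\ref{thm:3} itself but in setting up the underlying multilinear variable-Lebesgue extrapolation: one must allow a base class of weights as weak as $A_1$ in each slot, track how the individual iterates combine to produce the product weight $\overline w$ on the left, and ensure that the resulting constants depend only on uniform $A_1$ bounds so that, after the Rubio de Francia machinery is run, one ends up with a bound depending only on the variable-exponent data. Once such a theorem is available, Theorem~\ref{thm:3} reduces to a formal application of it to the weighted estimate supplied by Theorem~\ref{thm:1}; the only numerical point to check, which we did above, is that the regularity assumption on $N$ matches~\eqref{eq.N1} in the $A_1$ case $r_{w_k}=1$.
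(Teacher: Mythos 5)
Your high-level strategy matches the paper's exactly: run multilinear variable-exponent extrapolation (the paper's Theorem~\ref{thm:xtpl}, from~\cite{CN16}) against the weighted estimate of Theorem~\ref{thm:1} specialized to $A_1$ weights, where $r_{w_k}=1$ and the regularity condition~\eqref{eq.N1} collapses to the one you state, and pass between Hardy and Lebesgue norms via the grand maximal function. That numerical check and the identification of the $A_1$ base class are correct and are precisely what the paper does.

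There is, however, a genuine gap in the way you invoke the extrapolation theorem. Theorem~\ref{thm:xtpl} carries the a priori clause ``provided $\|F\|_{L^{q(\cdot)}}<\infty$,'' and on the weighted side the tuple must simultaneously satisfy the $L^p(\overline w)$ bound for \emph{every} choice of $A_1$ weights. You apply the theorem directly with $F=|T(f_1,\ldots,f_m)|$ for arbitrary $f_k\in H^{q_k(\cdot)}$, but there is no a priori reason that $\|T(f_1,\ldots,f_m)\|_{L^{q(\cdot)}}$ is finite, nor that such $f_k$ lie in $H^{p_k}(w_k)$ for all $w_k\in A_1$ at once, so the extrapolation theorem as stated does not yet apply. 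The paper circumvents this by restricting to $f_k$ that are finite linear combinations of $(K_0,\infty)$ atoms with $K_0>\max_k\floor{n(1/p_k-1)}_+$ (such $f_k$ lie in every $H^{p_k}(w_k)$ with $w_k\in A_1$, and also in $H^{q_k(\cdot)}$), by replacing $F$ with the truncation $\max\{|T(f_1,\ldots,f_m)|,R\}\chi_{B(0,R)}$ so that $\|F\|_{q(\cdot)}\le R\|\chi_{B(0,R)}\|_{q(\cdot)}<\infty$, then letting $R\to\infty$ via Fatou's lemma in $L^{q(\cdot)}$, and finally extending to general $f_k\in H^{q_k(\cdot)}$ by density of finite atomic sums. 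Without this truncation-and-density scaffolding your outline does not close; with it added, the argument is the paper's.
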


\begin{theorem}
\label{thm:4}
Given $q(\cdot),q_1(\cdot),\ldots,q_m(\cdot)$, $p,p_1,\ldots,p_m$ as in Theorem \ref{thm:3},
let $T$ be an $m$-CZO as in Theorem~\ref{thm:1}
satisfying \eqref{eqn:kernelregularity} for all $|\alpha|\le N$ with
\[
N> \floor{n\Big(\frac {1}{p}-1\Big)}_+
+\max\bigg\{\floor{mn\Big(\frac{1}{p_k}-1\Big)}_+,1\le k\le m\bigg\}+mn.
\]
Suppose further
that $T$ satisfies \eqref{eq.can} for all $|\alpha|\le
\floor{n(1/p-1)}_+$. Then 
$$
T: H^{q_1(\cdot)}\times \cdots \times H^{q_m(\cdot)}\to H^{q(\cdot)}.
$$
\end{theorem}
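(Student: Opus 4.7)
The plan is to derive Theorem~\ref{thm:4} directly from the weighted Hardy bound in Theorem~\ref{thm:2} by applying multilinear Rubio de Francia extrapolation in the variable Lebesgue scale, as developed by the first author and Naibo~\cite{CN16}. First, I would restrict attention to tuples $w_1, \ldots, w_m \in A_1$. For such weights $r_{w_k} = 1$, and since $\sum_{k=1}^m p/p_k = 1$, a single application of H\"older's inequality (with exponents $p_k/p$) shows that the product weight $\overline{w} = \prod_{k=1}^m w_k^{p/p_k}$ again lies in $A_1$. Consequently $s_{\overline{w}} = \floor{n(1/p - 1)}_+$, and both the cancellation hypothesis of Theorem~\ref{thm:4} and its quantitative assumption on $N$ reduce to precisely what Theorem~\ref{thm:2} requires when each $r_{w_k} = 1$. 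Applying Theorem~\ref{thm:2} yields the family of weighted bounds
\[
\|T(f_1,\ldots,f_m)\|_{H^p(\overline{w})} \le C \prod_{k=1}^m \|f_k\|_{H^{p_k}(w_k)}, \qquad w_1, \ldots, w_m \in A_1,
\]
with the constant depending only on the $A_1$ characteristics of the $w_k$.

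To pass to the variable Hardy scale, I would recast this as a weighted Lebesgue inequality for a grand maximal function $\Sp$ that simultaneously characterizes $H^p(w)$ for $w \in A_\infty$ and $H^{q(\cdot)}$ for log-H\"older continuous exponents. The weighted Hardy bound above becomes
\[
\|\Sp T(\vec f)\|_{L^p(\overline{w})} \le C \prod_{k=1}^m \|\Sp f_k\|_{L^{p_k}(w_k)},
\]
uniformly in $w_k \in A_1$. The multilinear extrapolation theorem from~\cite{CN16}, applied to the scalar pair $\bigl(\Sp T(\vec f);\,\Sp f_1, \ldots, \Sp f_m\bigr)$ at the base exponents $(p_1, \ldots, p_m)$, then converts this family of $A_1$-weighted Lebesgue estimates into
\[
\|\Sp T(\vec f)\|_{L^{q(\cdot)}} \le C \prod_{k=1}^m \|\Sp f_k\|_{L^{q_k(\cdot)}},
\]
and reinterpreting via the maximal characterization of variable Hardy spaces yields $T: H^{q_1(\cdot)} \times \cdots \times H^{q_m(\cdot)} \to H^{q(\cdot)}$.

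The main technical obstacle is verifying that the extrapolation result of~\cite{CN16}, formulated in the variable Lebesgue setting, is invoked in the correct pair-of-functions form and that its quantitative hypotheses are matched by the log-H\"older regularity of the $q_k(\cdot)$ together with the strict inequalities $p_k < (q_k)_-$ assumed in Theorem~\ref{thm:4}. Because extrapolation is applied not to the operator $T$ itself but to a pair of scalar functions, the underlying Rubio de Francia iteration is exactly the one in the Lebesgue setting; checking that $\overline{w} \in A_1$, inheriting the cancellation and regularity conditions from Theorem~\ref{thm:4} to Theorem~\ref{thm:2}, and translating between weighted/variable Hardy norms and their grand-maximal-function versions are all standard and amount to bookkeeping.
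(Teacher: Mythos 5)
Your overall strategy --- restrict to $A_1$ weights so that $r_{w_k}=1$ (hence $s_{w_k}=\floor{n(1/p_k-1)}_+$), observe via H\"older that $\overline{w}\in A_1$ so $s_{\overline{w}}=\floor{n(1/p-1)}_+$, invoke Theorem~\ref{thm:2} to get a family of weighted Hardy bounds, and extrapolate via Theorem~\ref{thm:xtpl} using the maximal-function characterization of Hardy spaces --- is exactly the route the paper takes (Theorem~\ref{thm:4} is obtained as a corollary of the more general Theorem~\ref{thm:Hp-Hp}, whose proof is the above, with $T$ replaced by $M_{N_0}\circ T$ in the extrapolation family). Your check that the hypotheses of Theorem~\ref{thm:2} match those of Theorem~\ref{thm:4} under the $A_1$ restriction is correct.

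However, you dismiss as ``bookkeeping'' the one genuinely delicate point in this extrapolation argument: Theorem~\ref{thm:xtpl} requires, as a standing hypothesis, that $F\in L^p(\overline w)$ on the weighted side and that $\|F\|_{L^{q(\cdot)}}<\infty$ before the variable-exponent conclusion can be read off. If one simply sets $F=M_{N_0}T(f_1,\ldots,f_m)$ for arbitrary $f_k\in H^{q_k(\cdot)}$, the finiteness of $\|F\|_{L^{q(\cdot)}}$ is precisely what is to be proved, and the argument becomes circular. The paper's proof avoids this by (i) restricting the extrapolation family $\F$ to tuples where each $f_k$ is a \emph{finite} linear combination of $(K_0,\infty)$ atoms, so that $f_k$ lies in $H^{p_k}(w_k)$ for every $w_k\in A_1$ and Theorem~\ref{thm:2} applies; (ii) truncating the left-hand entry, taking $F=\max\{M_{N_0}T(\vec f),\,R\}\chi_{B(0,R)}$, which is trivially in $L^{q(\cdot)}$ and bounded above by $M_{N_0}T(\vec f)$ in $L^p(\overline w)$; (iii) after extrapolation, letting $R\to\infty$ by Fatou's lemma in $L^{q(\cdot)}$; and (iv) concluding by the density of finite atomic sums in $H^{q_k(\cdot)}$. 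Your proposal should be completed with this truncation-and-density scaffolding; as written, the step ``apply Theorem~\ref{thm:xtpl} to the pair $(\Sp T(\vec f);\Sp f_1,\ldots,\Sp f_m)$'' does not have a verifiable hypothesis to start from.
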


\begin{remark}
As we were completing this paper we learned that a version of 
Theorems~\ref{thm:3} and~\ref{thm:4},  with the additional hypothesis
that $(q_k)_+\le 1$ for all $1\le k\le m$,  was independently proved
by Tan~\cite{Tan17}.  We want to thank the author for sharing with us
a preprint of his work.
\end{remark}

The remainder of this paper is organized as follows.  In
Section~\ref{section:weights} we give some basic definitions and
theorems about weights that we will use in subsequent sections.  In
particular, we prove a finite atomic decomposition for weighted Hardy
spaces that extends the results in~\cite{CW14}.    In
Section~\ref{section:auxiliary} we gather together a number of
technical lemmas that we need for the proofs of Theorems~\ref{thm:1}
and~\ref{thm:2}.  Then in Sections~\ref{section:proof1}
and~\ref{section:proof2} we prove these results.  Finally, in
Section~\ref{section:variable} we give some basic facts about variable
exponent spaces and prove Theorems~\ref{thm:3}
and~\ref{thm:4}.  In fact, we prove more general results which include
these theorems as special cases.  Their statements, however, require
additional facts about variable exponent spaces, and so we delay their
statement until the final section.

Throughout this paper, we will use $n$ to denote the dimension of the
underlying space, $\R^n$, and will use $m$ to denote the ``dimension''
of our multilinear operators.  By a cube $Q$ we will always mean a cube
whose sides are parallel to the coordinate axes, and for $\tau>1$ let
$\tau Q$ denote the cube with same center such that $\ell(\tau Q)=\tau
\ell(Q)$. 
We define the
average of a function $f$ on a cube $Q$ by $f_Q= \avgint_Q f\,dx=
|Q|^{-1}\int_Q f\,dx$.   By $C$, $c$, etc. we will mean
constants that may depend on the underlying parameters in the
problem.  Sometimes, to emphasize that they (only) depend on certain
parameters, we will write $C(X,Y,Z,\ldots)$.  The values of these
constants may change from line to line.  If we write $A\lesssim B$, we
mean that $A\leq cB$ for some constant~$c$.

\section{Weights and weighted Hardy spaces}
\label{section:weights}

\subsection*{Weights and weighted norm inequalities}
In this section we give some basic definitions and results about
$A_p$ weights.  For complete information, we refer the reader
to~\cite{duoandikoetxea01,garcia-cuerva-rubiodefrancia85, grafakos08b}.
  By a weight~$w$ we always mean a non-negative, locally
integrable function such that $0<w(x)<\infty$ a.e.  For $1<p<\infty$,
we say that $w$ is in the Muckenhoupt class $A_p$, denoted by $w\in
A_p$, if 
\[ [w]_{A_p}=\sup_Q\left(\avgint_Q w\,dx\right)\left(\avgint_Q
    w^{1-p'}\,dx\right)^{p-1} <\infty.\]
When $p=1$ we say
that $w\in A_1$ if there is a constant $C$ such that for every
cube $Q$ and a.e. $x\in Q$,
\[  \avgint_Q w\,dx \leq C w(x). \]
The infimum over all such constants will be denoted by $[w]_{A_1}$.  The $A_p$ classes are nested:  for $1<p<q<\infty$, $A_1\subsetneq A_p
\subsetneq A_q$.  Let $A_\infty$ denote the union of all the $A_p$
classes, $p\geq 1$.

Given $w\in A_\infty$, then $w$ is a doubling measure.  More precisely,
if $w\in A_p$ for some $p\geq 1$, then it follows from the definition
that given any cube $Q$ and $\tau>1$, 
\[  w(\tau Q) \leq C\tau^{np} w(Q). \]

\medskip

In the study of multilinear weighted norm inequalities, we often need
the fact that the convex hull of $A_\infty$ weights is again in
$A_\infty$.  
The following result can be found, for instance, in~\cite{XY12} or 
in~\cite[Lemma~5]{GM04}.  For completeness we sketch a short
proof, using a multilinear reverse H\"older inequality:  if
$w_1,\ldots,w_m\in A_\infty$,  $1<p_1,\ldots,p_m<\infty$, and
$\frac{1}{p}=\frac{1}{p_1}+\cdots+\frac{1}{p_m}$, then for every cube $Q$,
\[ \prod_{k} \left(\avgint_Q w_k\,dx\right)^{\frac{p}{p_k}}
\lesssim
\avgint_Q \prod_k w_k^{\frac{p}{p_k}}\,dx. \]
This was originally proved in the bilinear case by the first author
and Neugebauer~\cite{cruz-uribe-neugebauer95}; for simpler proofs in
the multilinear case, see~\cite{DCU-KMP17,XY12}. 

\begin{lemma} \label{lemma:Ainfty}
Given $m\geq 1$, $1<p_1,\ldots,p_m<\infty$,
$\frac{1}{p}=\frac{1}{p_1}+\cdots+\frac{1}{p_m}$,  if
$w_1,\ldots,w_m \in A_\infty$, then  $\overline{w}=\prod_{k=1}^m
w_k^{\frac{p}{p_k}} \in A_\infty$. 
\end{lemma}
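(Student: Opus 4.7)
The plan is to verify the quantitative $A_\infty$ characterization for $\overline{w}$: there exist constants $C,\delta>0$ such that $\overline{w}(E)/\overline{w}(Q)\le C(|E|/|Q|)^\delta$ for every cube $Q$ and every measurable $E\subset Q$. The strategy is to compare $\overline{w}$ to the product $\prod_k w_k^{p/p_k}$ on both $E$ and $Q$: upstairs by ordinary Hölder, downstairs by the multilinear reverse Hölder inequality recalled just before the lemma.

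First, since $\sum_k p/p_k=1$, the generalized Hölder inequality with conjugate exponents $\{p_k/p\}_{k=1}^m$ applied to $\overline{w}=\prod_k w_k^{p/p_k}$ integrated over $E$ yields the upper bound
\[
\overline{w}(E)=\int_E \prod_k w_k^{p/p_k}\,dx \le \prod_k w_k(E)^{p/p_k}.
\]
Next, I would apply the multilinear reverse Hölder inequality on the cube $Q$ and clear the factor $|Q|$ (using $\sum_k p/p_k=1$ once more) to obtain the matching lower bound
\[
\prod_k w_k(Q)^{p/p_k}\lesssim \overline{w}(Q).
\]
Dividing these two displays gives
\[
\frac{\overline{w}(E)}{\overline{w}(Q)}\lesssim \prod_k \left(\frac{w_k(E)}{w_k(Q)}\right)^{p/p_k}.
\]

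Finally, the $A_\infty$ assumption on each $w_k$ supplies constants $C_k,\delta_k>0$ with $w_k(E)/w_k(Q)\le C_k(|E|/|Q|)^{\delta_k}$, and substituting into the previous display produces $\overline{w}(E)/\overline{w}(Q)\lesssim (|E|/|Q|)^{\delta}$ with $\delta=\sum_k \delta_k p/p_k>0$, which is the desired $A_\infty$ characterization. The only potentially delicate step is the denominator estimate, but it is handled directly by the multilinear reverse Hölder inequality stated above, so I do not anticipate any real obstacle; the rest is routine application of Hölder and the standard quantitative $A_\infty$ characterization.
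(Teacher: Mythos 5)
Your proposal is correct and follows essentially the same path as the paper: bound $\overline{w}(E)$ from above by $\prod_k w_k(E)^{p/p_k}$ via H\"older, bound $\overline{w}(Q)$ from below by $\prod_k w_k(Q)^{p/p_k}$ via the multilinear reverse H\"older inequality, and then invoke the quantitative $A_\infty$ characterization for each $w_k$. The paper simply compresses these steps into a single chain of inequalities; the ideas and tools are identical.
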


\begin{proof}
Since each $w_k \in A_\infty$, by choosing $C$ sufficiently large and
$\delta<1$ sufficiently close to $1$, we have that for every cube $Q$
and $E\subset Q$,
\[ \frac{w_k(E)}{w_k(Q)} \leq C\left(\frac{|E|}{|Q|}\right)^\delta. \]
But then, if we apply H\"older's inequality and the multilinear
reverse H\"older's inequality, we have that
\[ \frac{\overline{w}(E)}{\overline{w}(Q)} 
\leq
\frac{\prod_{k=1}^m \left(\int_E w_k\,dx\right)^{\frac{p}{p_k}}}
{\prod_{k=1}^m \left(\int_Q w_k\,dx\right)^{\frac{p}{p_k}}}
\leq
C\left(\frac{|E|}{|Q|}\right)^{\delta}. \]
\end{proof}

\medskip

There is a close connection between Muckenhoupt weights and the
Hardy-Littlewood maximal operator, defined by 
\[ Mf(x) = \sup_Q \avgint_Q |f(y)|\,dy \cdot \chi_Q(x), \]
where the supremum is taken over all cubes $Q$.   We have that if
$1<p<\infty$, then the maximal operator is bounded $L^p(w)$ if and
only if $w\in A_p$.  Moreover, we have a weighted vector-valued
inequality that generalizes the Fefferman-Stein inequality.  This was
first proved by Anderson and John~\cite{AJ80}; for an elementary proof
via extrapolation, see~\cite{CMP11}. 

\begin{lemma} \label{vv-max}
Given $1<p,q<\infty$ and $w\in A_p$, then for any sequence $\{f_k\}$ in
$L^p(w)$, 
\[ \Big\|\Big(\sum_k (Mf_k)^q\Big)^{\frac{1}{q}}\Big\|_{L^p(w)} 
\lesssim 
\Big\|\Big(\sum_k |f_k|^q\Big)^{\frac{1}{q}}\Big\|_{L^p(w)}. \]
\end{lemma}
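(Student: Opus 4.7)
The plan is to prove this via multilinear (or vector-valued) Rubio de Francia extrapolation, as suggested by the reference to~\cite{CMP11}. The strategy is to identify a single exponent $p_0$ at which the desired inequality can be proved directly for all $w\in A_{p_0}$, and then invoke extrapolation to spread this estimate across every pair $(p,w)$ with $p\in(1,\infty)$ and $w\in A_p$. The natural choice is $p_0=q$, since at this exponent the $\ell^q$-valued estimate decouples into a sum of scalar estimates.

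For the base case, fix $w\in A_q$ and use the classical scalar weighted bound for the Hardy--Littlewood maximal operator on $L^q(w)$:
\[
\Big\|\Big(\sum_k (Mf_k)^q\Big)^{1/q}\Big\|_{L^q(w)}^{q}
= \sum_k \int_{\R^n} (Mf_k)^{q}\, w\,dx
\lesssim \sum_k \int_{\R^n} |f_k|^{q}\, w\,dx
= \Big\|\Big(\sum_k |f_k|^q\Big)^{1/q}\Big\|_{L^q(w)}^{q}.
\]
This is essentially the only nontrivial analytic input; everything else is soft.

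To finish, apply Rubio de Francia extrapolation to the family $\mathcal{F}$ of pairs
\[
(F,G)=\Big(\big(\textstyle\sum_k (Mf_k)^q\big)^{1/q},\ \big(\textstyle\sum_k |f_k|^q\big)^{1/q}\Big),
\]
indexed by finite sequences $\{f_k\}$ of bounded, compactly supported functions. The previous step shows $\|F\|_{L^q(w)}\lesssim \|G\|_{L^q(w)}$ for every $w\in A_q$, with a constant depending only on $[w]_{A_q}$. The extrapolation theorem (in the form stated in~\cite{CMP11}) then yields $\|F\|_{L^p(w)}\lesssim \|G\|_{L^p(w)}$ for every $p\in(1,\infty)$ and every $w\in A_p$, which is precisely Lemma~\ref{vv-max}. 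Passage from finite sequences of $L^\infty_c$ functions to arbitrary sequences in $L^p(w)$ is by monotone convergence.

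The main obstacle is really a conceptual one rather than a computational one: one must verify that extrapolation applies to a family of pairs indexed by vector-valued inputs, and in particular that the constant in the base case depends only on $[w]_{A_q}$ (so that the hypotheses of the extrapolation theorem are met). Once this is set up, the argument is short. A minor technical point worth flagging is that, to avoid issues of a priori finiteness of the $\ell^q$-norm before extrapolation is applied, one works at first with finite sequences of nice functions and only afterwards extends to general sequences.
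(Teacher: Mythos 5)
The paper does not give its own proof of this lemma; it attributes the result to Anderson and John \cite{AJ80} and explicitly points the reader to \cite{CMP11} for ``an elementary proof via extrapolation.'' Your argument --- decoupling at $p_0=q$ using the scalar $L^q(w)$ bound for $M$ with $w\in A_q$, and then invoking Rubio de Francia extrapolation over the family of pairs $\big(\big(\sum_k(Mf_k)^q\big)^{1/q},\,\big(\sum_k|f_k|^q\big)^{1/q}\big)$ --- is exactly that extrapolation proof, and it is correct.
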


\begin{remark} \label{FS-argument}
Below we will repeatedly apply Lemma~\ref{vv-max} in the following
way.  Fix $0<p<\infty$ and $w\in A_\infty$.   Then $w\in A_q$ and
without loss of generality we may assume $p<q$. Let
$r=\frac{q}{p}>1$.    Given a sequence of cubes $Q_k$, let
$Q_k^*=\tau Q_k$, $\tau>1$.  Then $\chi_{Q_k^*} \lesssim M(\chi_{Q_k})$, and the
implicit constant depends only on $n$ and $\tau$.   But then by
Lemma~\ref{vv-max}, we have that for any non-negative $\lambda_k$, 
\begin{multline*}
\Big\| \sum_k \lambda_k\chi_{Q_k^*}\Big\|_{L^p(w)}
 \lesssim
\Big\| \sum_k M\big(\lambda_k ^{\frac{1}{r}}\chi_{Q_k}^{\frac{1}{r}}\big)^r\Big\|_{L^p(w)}
 = 
\Big\| \Big(\sum_k
  M\big(\lambda_k ^{\frac{1}{r}}\chi_{Q_k}^{\frac{1}{r}}\big)^r\Big)^{\frac{1}{r}}
\Big\|_{L^q(w)}^r \\
 \lesssim 
\Big\| \Big(\sum_k
\lambda_k\chi_{Q_k}\Big)^{\frac{1}{r}}
\Big\|_{L^q(w)}^r 
 = 
\Big\| \sum_k \lambda_k\chi_{Q_k}\Big\|_{L^p(w)}.
\end{multline*}
\end{remark}

\medskip

Below we will need to prove a weighted norm inequality for an
$m$-CZO.  To do so, we will make use of some recent developments in the theory
of harmonic analysis on the domination of singular integrals by sparse
operators.    Here we sketch the basic definitions; for further
information, see, for instance,~\cite{CruzUribe:2016ji}.  

A collection of cubes $\Sp$ is called a {\it sparse family} if each cube
$Q\in \Sp$ contains measurable subset $E_Q\subset Q$ such that
$|E_Q|\geq \frac12|Q|$ and the family $\{E_Q\}_{Q\in \Sp}$ is pairwise
disjoint.  Given a sparse family $\Sp$ we define a linear sparse
operator
\[ T^\Sp f(x) = \sum_{Q\in \Sp} \avgint_Q f(y)\,dy\cdot \chi_Q(x). \]
The following estimate is proved in~\cite{CMP12, M12}.

\begin{proposition} \label{prop:linear-sparse}
If $1<q<\infty$ and $w\in A_q$, then given any sparse linear operator
$T^\Sp$,
$$
\|
T^\Sp f
\|_{L^q(w)}
=
\Big\| \sum_{Q\in \Sp}\avgint_Q
    f\,dy\cdot \chi_Q\Big\|_{L^q(w)}\leq
C[w]_{A_q}^{\max(1,\frac{1}{q-1})}\|f\|_{L^q(w)}.$$
\end{proposition}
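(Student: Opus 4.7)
The plan is to combine unweighted duality with a principal-cube grouping and the Carleson embedding theorem. Set $\sigma:=w^{1-q'}$; then $\sigma\in A_{q'}$ with $[\sigma]_{A_{q'}}=[w]_{A_q}^{q'-1}$, and the $A_q$ condition takes the symmetric form $w(Q)\sigma(Q)^{q-1}\le[w]_{A_q}|Q|^q$, so the desired estimate is self-dual under $(w,q)\leftrightarrow(\sigma,q')$.

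First, I would use the unweighted pairing $\int fg\,dx\le\|f\|_{L^q(w)}\|g\|_{L^{q'}(\sigma)}$ to reduce to showing that for nonnegative $f,g$,
\begin{equation*}
\Lambda(f,g) := \sum_{Q\in\Sp}|Q|\langle f\rangle_Q\langle g\rangle_Q \le C[w]_{A_q}^{\max(1,1/(q-1))}\|f\|_{L^q(w)}\|g\|_{L^{q'}(\sigma)}.
\end{equation*}
After replacing $f$ by $f\sigma$ and $g$ by $gw$, the form becomes $\sum_Q \frac{w(Q)\sigma(Q)}{|Q|}\langle f\rangle_Q^\sigma\langle g\rangle_Q^w$, with $f\in L^q(\sigma)$ and $g\in L^{q'}(w)$; this is a symmetric reformulation in which the coefficient $w(Q)\sigma(Q)/|Q|$ is controlled directly by $[w]_{A_q}$.

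Second, I would run the standard principal-cube construction: let $\mathcal{F}\subset\Sp$ consist of the maximal cubes of $\Sp$ together with, recursively for each $F\in\mathcal{F}$, the maximal cubes $Q\subsetneq F$ in $\Sp$ for which $\max\{\langle f\rangle_Q^\sigma/\langle f\rangle_F^\sigma,\,\langle g\rangle_Q^w/\langle g\rangle_F^w\}>2$. Write $\pi(Q)$ for the minimal element of $\mathcal{F}$ containing $Q$. By construction $\langle f\rangle_Q^\sigma\le 2\langle f\rangle_{\pi(Q)}^\sigma$ (similarly for $g$), and $\mathcal{F}$ itself is sparse by a standard weak-type estimate applied to the stopping children. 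Grouping $\Lambda$ by principal cubes, and using sparseness ($|Q|\le 2|E_Q|$) together with the $A_\infty$ property of $w$ and $\sigma$ ($w(Q)\lesssim w(E_Q)$, $\sigma(Q)\lesssim\sigma(E_Q)$, which follows from Lemma~\ref{lemma:Ainfty} and the sparseness of $\{E_Q\}$), the inner sum $\sum_{\pi(Q)=F}$ collapses to $C[w]_{A_q}^{\max(1,1/(q-1))}\langle f\rangle_F^\sigma\langle g\rangle_F^w|F|$.

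Finally, summing over $F\in\mathcal{F}$ and applying H\"older together with the Carleson embedding theorem for $\mathcal{F}$ with respect to $\sigma$ and $w$ yields the stated bound. The main obstacle is attaining the \emph{sharp} exponent $\max(1,1/(q-1))$: a simpler route that bounds $\Lambda(f,g)$ by $\int Mf\cdot Mg\,dx$ and invokes Buckley's sharp maximal inequality in each factor gives only $[w]_{A_q}^{1+1/(q-1)}$, which is strictly worse by one full power of $[w]_{A_q}$. The sharp constant requires the asymmetric allocation of weight constant between the two norms provided by the principal-cube grouping, and this is the one genuinely delicate step.
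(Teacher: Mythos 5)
The paper does not give an in-text proof of Proposition~\ref{prop:linear-sparse}: it simply cites~\cite{CMP12,M12}, so your blind attempt is to be judged on its own merits. The route you propose---dualize, reweight $f\mapsto f\sigma$, $g\mapsto gw$ to get the symmetric form $\sum_Q \tfrac{\sigma(Q)w(Q)}{|Q|}\langle f\rangle_Q^\sigma\langle g\rangle_Q^w$, then run a principal-cube (corona) decomposition and close with Carleson embedding---is indeed one of the modern proofs of the sharp sparse-operator bound, in the spirit of the $A_2$ theorem. You also correctly identify that the naive $\int Mf\cdot Mg$ argument with Buckley loses a power. However, the argument as written has a genuine gap precisely at the step you flag as ``the one genuinely delicate step.''

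The claimed inner-sum collapse to $C[w]_{A_q}^{\max(1,1/(q-1))}\langle f\rangle_F^\sigma\langle g\rangle_F^w\,|F|$ cannot be right, because the Lebesgue coefficient $|F|$ is incompatible with the Carleson embedding you plan to invoke afterwards: bounding $\sum_F(\langle f\rangle_F^\sigma)^q c_F\lesssim\|f\|_{L^q(\sigma)}^q$ requires $\sum_{F\subset F_0}c_F\lesssim\sigma(F_0)$, which forces $c_F\sim\sigma(F)$ (and the $g$-factor forces $c_F\sim w(F)$), not $c_F=|F|$. What the $A_q$ condition actually gives locally is $\tfrac{\sigma(Q)w(Q)}{|Q|}\le[w]_{A_q}^{1/q}\sigma(Q)^{1/q}w(Q)^{1/q'}$, so the inner sum produces (after H\"older) terms of the form $[w]_{A_q}^{1/q}\langle f\rangle_F^\sigma\langle g\rangle_F^w\big(\sum_{\pi(Q)=F}\sigma(Q)\big)^{1/q}\big(\sum_{\pi(Q)=F}w(Q)\big)^{1/q'}$, and it is the two $A_\infty$-Carleson sums $\sum_{\pi(Q)=F}\sigma(Q)\lesssim[\sigma]_{A_\infty}\sigma(F)$ and $\sum_{\pi(Q)=F}w(Q)\lesssim[w]_{A_\infty}w(F)$ that supply the remaining power, giving the mixed bound $[w]_{A_q}^{1/q}\big([\sigma]_{A_\infty}^{1/q}+[w]_{A_\infty}^{1/q'}\big)$ and hence $[w]_{A_q}^{\max(1,1/(q-1))}$. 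Your version skips this bookkeeping entirely. In addition, the assertion that $\mathcal F$ is ``sparse by a standard weak-type estimate'' glosses over the fact that the stopping rule mixes two measures: the weak-$(1,1)$ inequality controls the $\sigma$-measure of children that stopped because $\langle f\rangle^\sigma$ doubled, and the $w$-measure of those that stopped because $\langle g\rangle^w$ doubled, but gives no control on the $\sigma$-measure of the $g$-stopped children (or vice versa). So $\mathcal F$ is not automatically $\sigma$-Carleson or $w$-Carleson, and one must either stop on a single average and treat the other factor by a maximal-function estimate, or split $\mathcal F$ into $f$-stopping and $g$-stopping cubes and handle the two families separately. As it stands the proposal does not close.
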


In a similar way, given a sparse family $\Sp$ we define the 
multilinear sparse operator
$$T^\Sp(f_1,\ldots,f_m)(x)=\sum_{Q\in \Sp}
\prod_{k=1}^m
\avgint_Q  f_k(y_k)\,dy_k\cdot \chi_Q(x).$$ 
The following pointwise domination theorem was proved in \cite[Theorem
13.2]{LN15} (see also \cite{CR16}).

\begin{proposition}\label{thm:LernNaz} 
Let $T$ be an $m$-CZO whose kernel satisfies
\eqref{eqn:kernelregularity} for any $N\geq 1$.  
Then given any collection $f_1,\ldots,f_m$ of bounded functions of
compact support,  there exists $3^n$ sparse families $S_j$ such that
$$|T(f_1,\ldots,f_m)(x)|\leq C\sum_{j=1}^{3^n}
T^{\Sp_j}(|f_1|,\ldots,|f_m|)(x).$$
%x
\end{proposition}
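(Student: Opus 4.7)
The plan is to follow the local-oscillation/stopping-time approach due to Lerner, adapted to the multilinear setting. The preliminary move is the $3^n$-grid trick of Okikiolu/Mei: every cube $Q\subset\R^n$ lies inside a dyadic cube of comparable size from one of $3^n$ shifted dyadic systems $\mathcal{D}^{(1)},\dots,\mathcal{D}^{(3^n)}$. So it is enough to construct, for each grid $\mathcal{D}^{(j)}$, a sparse family $\Sp_j\subset\mathcal{D}^{(j)}$ on which the pointwise bound holds; the $3Q$ appearing below can then be absorbed into a cube comparable to $Q$ in the relevant shifted grid.

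The key auxiliary objects are the multisublinear Hardy--Littlewood maximal function
\[
\mathcal{M}(f_1,\dots,f_m)(x)=\sup_{Q\ni x}\prod_{k=1}^{m}\avgint_Q|f_k|\,dy
\]
and the grand maximal truncation
\[
M_T(f_1,\dots,f_m)(x)=\sup_{Q\ni x}\,\esssup_{\xi\in Q}\bigl|T(f_1\chi_{\R^n\setminus 3Q},\dots,f_m\chi_{\R^n\setminus 3Q})(\xi)\bigr|.
\]
The kernel bound \eqref{eqn:kernelregularity} (with $N\ge 1$ sufficing) yields the pointwise comparison $M_T(f_1,\dots,f_m)(x)\lesssim \mathcal{M}(f_1,\dots,f_m)(x)+M\bigl(T(f_1,\dots,f_m)\bigr)(x)$, and combined with the Grafakos--Torres endpoint $T:L^1\times\cdots\times L^1\to L^{1/m,\infty}$, both $\mathcal{M}$ and $M_T$ are of weak type $(1,\dots,1;1/m)$.

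Fix a grid and a large starting cube $Q_0$ (ultimately letting $Q_0\uparrow\R^n$). Build $\Sp$ by iteration: put $Q_0\in\Sp$, and for each $Q\in\Sp$ define $\mathrm{ch}(Q)$ to be the maximal dyadic subcubes $Q'\subsetneq Q$ on which either
\[
\mathcal{M}(f_1\chi_{3Q},\dots,f_m\chi_{3Q})>A\prod_{k=1}^m\avgint_{3Q}|f_k|\,dy
\quad\text{or}\quad
M_T(f_1\chi_{3Q},\dots,f_m\chi_{3Q})>A\prod_{k=1}^m\avgint_{3Q}|f_k|\,dy.
\]
For $A$ large enough relative to the weak-type constants, $|\bigcup\mathrm{ch}(Q)|\le\tfrac12|Q|$, so $E_Q:=Q\setminus\bigcup\mathrm{ch}(Q)$ satisfies $|E_Q|\ge\tfrac12|Q|$ and the $\{E_Q\}$ are pairwise disjoint; that is, $\Sp$ is sparse. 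On each $E_Q$ both stopping inequalities fail, and splitting $T(f_1,\dots,f_m)$ into its $3Q$-local part (controlled directly by the first stopping inequality in the parent cube) and its far-field part (controlled by $M_T$ via the second), one obtains $|T(f_1,\dots,f_m)(x)|\lesssim A\prod_k\avgint_{3Q}|f_k|\,dy$ for a.e.\ $x\in E_Q$. Summing over $Q\in\Sp$ and then over the $3^n$ grids (re-indexing so $3Q$ is replaced by a comparable dyadic cube in a shifted grid) yields the desired pointwise bound.

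The main obstacle is the multilinear endpoint weak-type bound for $M_T$. In the linear Lerner argument the local median of $Tf$ is controlled directly by the weak $(1,1)$ bound; in the multilinear setting one must separately dominate the off-diagonal piece by $\mathcal{M}$ using \eqref{eqn:kernelregularity} and invoke the nontrivial weak $(1,\dots,1;1/m)$ endpoint for $T$ itself from Grafakos--Torres. Once these ingredients are secured, the stopping-time/sparseness mechanics are routine.
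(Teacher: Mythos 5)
The paper does not prove this proposition; it cites it from Lerner--Nazarov~\cite{LN15} (Theorem~13.2) and Conde-Alonso--Rey~\cite{CR16}, so there is no in-paper proof to compare against. Your proposal follows a Lerner-style stopping-time construction, which is one correct route (and closer in spirit to~\cite{LN15} than to~\cite{CR16}, which works through local mean oscillation and positive dyadic shifts). The overall scaffolding --- $3^n$ shifted grids, the multisublinear maximal function $\mathcal{M}$, a grand maximal truncation $M_T$, the stopping rule, and the recursion over $E_Q$ --- is the right shape.

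However, there is a genuine gap in the one step you flag as the ``main obstacle,'' namely the weak-type endpoint for $M_T$. You claim the pointwise comparison
\[
M_T(f_1,\dots,f_m)(x)\lesssim \mathcal{M}(f_1,\dots,f_m)(x)+M\bigl(T(f_1,\dots,f_m)\bigr)(x)
\]
together with the Grafakos--Torres bound $T\colon L^1\times\cdots\times L^1\to L^{1/m,\infty}$ gives that $M_T$ is of weak type $(1,\dots,1;1/m)$. This does not follow: the Hardy--Littlewood maximal operator $M$ is not bounded on $L^{q,\infty}$ for $q\le 1$. Indeed, $M\circ M$ already fails to be weak $(1,1)$; for $f=\chi_{[0,1]^n}$ one has $M(Mf)(x)\gtrsim |x|^{-n}\log|x|$ at infinity, which lies outside $L^{1,\infty}$. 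So even if the displayed pointwise bound holds (and it is itself delicate, since controlling the partial truncations by $|T\vec f|$ requires $T\vec f\in L^1_{\mathrm{loc}}$ and a simultaneous Chebyshev selection over $Q$), it cannot be used to transfer weak-type behavior from $T$ to $M_T$ through $M$. The correct ingredient is either (i) the multilinear maximal truncated operator $T^*$ in place of $M\circ T$, together with the multilinear Cotlar-type estimate (Grafakos--Torres) that makes $T^*$ weak type $(1,\dots,1;1/m)$, giving $M_T\lesssim\mathcal{M}+T^*$; or (ii) a direct weak-type estimate for $M_T$ via a Calder\'on--Zygmund decomposition, which is how Lerner argues in the linear case and how~\cite{LN15} proceeds in the multilinear one.

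A secondary, fixable imprecision: when you split $T(f_1,\dots,f_m)$ on the good set $E_Q$, there are $2^m$ local/far combinations $T(f_1\chi_{A_1},\dots,f_m\chi_{A_m})$ with $A_k\in\{3Q,(3Q)^c\}$, not just two. Your $M_T$ only controls the all-far term. The mixed terms (at least one far, at least one local) must be estimated separately, typically directly from the kernel size bound \eqref{eqn:kernelregularity}, which gives a pointwise bound by $\mathcal{M}$ (or $\prod_k Mf_k$) on $Q$ after summing over dyadic annuli. Without that, the statement ``the far-field part is controlled by $M_T$'' is not literally correct for $m\ge 2$.
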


\subsection*{Weighted Hardy spaces}
In this section we define the weighted Hardy spaces and prove a finite
atomic decomposition theorem.  In defining them we follow Str\"omberg
and Torchinsky~\cite{ST89} and we refer the reader there for more
information.

Let $\mathscr{S}(\mathbb{R}^n)$ denote the Schwartz class of smooth
functions.  For $N_0\in \mathbb N$ to be a large value determined later, define
\[
\mathfrak{F}_{N_0} 
= 
\{ \varphi\in\mathscr{S}(\mathbb{R}^n) : 
\int (1+|x|)^{N_0}\Big( \sum_{|\alpha|\le N_0}
\Big| \frac{\partial^{\alpha}}{\partial x^{\alpha}}\varphi(x) \Big|^2 \Big)dx\le 1 \}.
\]
Fix  $0<p<\infty$ and $w\in A_{\infty}$; we define the weighted Hardy
space $H^p(w)$ to be the set of distributions
\[
H^p(w) = \{ f\in \mathscr{S}'(\mathbb{R}^n) : \mathcal{M}_{N_0}(f)\in L^p(w) \}
\]
with the quasi-norm
\[
\|f\|_{H^p(w)} = \|\mathcal{M}_{N_0}(f)\|_{L^p(w)},
\]
where the grand maximal function $\mathcal{M}_{N_0}(f)$ is defined by
\[
\mathcal{M}_{N_0}(f)(x) = \sup_{\varphi\in\mathfrak{F}_{N_0}}\sup_{t>0}\big|
\varphi_t*f(x)
\big|.
\]
Note that in this definition, $N_0$ is taken to be a large positive
integer, depending on $n,p$ and $w$, whose value is chosen so that the
usual definitions of unweighted Hardy spaces remain equivalent in the
weighted setting.  Its exact value does not matter for us.  

Given an integer $N>0$, an $(N,\infty)$ atom is a function $a$ such
that there exists a cube $Q$ with $\supp(a)\subset Q$, $\|a\|_\infty
\leq 1$, and for  $|\beta| \leq N$,
\[ \int_{\R^n} x^\beta a(x)\,dx = 0. \]
In~\cite[Chapter VIII]{ST89} it was shown that every $f\in H^p(w)$ has
an atomic decomposition: for every $N\geq s_w$ there exist a sequence
of non-negative numbers $\{\lambda_k\}$ and a sequence of smooth
$(N,\infty)$ atoms $\{a_k\}$ with $\supp(a_k)\subset Q_k$,  such that
\[
f = \sum_{k}\lambda_ka_k,
\]
and the sum converges in the sense of distributions and in the
$H^p(w)$ quasi-norm.   Moreover, we have that
\[ \Big\|\sum_k \lambda_k \chi_{Q_k}\Big\|_{L^p(w)}
\lesssim
\|f\|_{H^p(w)}. \]

Below, we want to use the atomic decomposition to estimate the norm of
an $m$-CZO.  One technical obstacle, however, is that this atomic
decomposition may be an infinite sum, and therefore it is not
immediate that we can exchange sum and integral in the definition of
an $m$-CZO.  For the argument to overcome this problem in the
unweighted setting, see~\cite{GH14}. Our approach here is different:
we show that for a dense subset of $H^p(w)$, we can form the atomic
decomposition using a finite sequence of atoms.  Our result
generalizes a result in the unweighted case from~\cite{MSV08}; in the
weighted case it generalizes results proved in~\cite{CW14,NS12}.

To state our result, note that for  $N\ge s_w$, if  we define
\[
\mathcal{O}_N
=
\big\{f\in \mathcal{C}_0^{\infty}\ :\
\int_{\mathbb{R}^n}x^{\alpha}f(x)\,dx=0,\quad 0 \leq|\alpha|\le N \big\},
\]
then $\mathcal{O}_N\cap H^p(w)$ is dense in $H^p(w)$. 

\begin{theorem} \label{lm.fin.dec} Fix $w\in A_\infty$ and
  $0<p<\infty$, and let $N\geq s_w$.  For each
  $f\in \mathcal{O}_N\cap H^p(w)$, there exists a
  finite sequence of non-negative numbers $\{\lambda_k\}_{k}$ and a
  sequence  $\{a_k\}$ of $(N,\infty)$ atoms, $\supp(a_k)\subset Q_k$, such that
  $f=\sum_k\lambda_k a_k$ and
\begin{equation}
\label{eq.dec.norm}
\Big\|
\sum_{k}\lambda_k\chi_{Q_k}
\Big\|_{L^{p}(w)}
\le C\|f\|_{H^p(w)},
\end{equation}
\end{theorem}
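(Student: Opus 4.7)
The plan is to start from the infinite Str\"omberg-Torchinsky atomic decomposition of $f$, truncate the series at a carefully chosen level, and absorb the remaining tail into a single $(N,\infty)$ atom. Write
\[
f = \sum_{j\in\mathbb{Z}}\sum_k \lambda_{j,k}a_{j,k}
\]
as in~\cite[Chapter VIII]{ST89}, where each $a_{j,k}$ is an $(N,\infty)$ atom supported in a Whitney cube $Q_{j,k}$ of the level set $\Omega_j=\{\mathcal{M}_{N_0}f>2^j\}$, and $\|\sum_{j,k}\lambda_{j,k}\chi_{Q_{j,k}}\|_{L^p(w)}\lesssim \|f\|_{H^p(w)}$. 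The ST construction also yields the telescoping identity $\sum_{j\geq -J,\,k}\lambda_{j,k}a_{j,k}=f-g_{-J}$, where $g_{-J}$ is the Calder\'on-Zygmund ``good'' part of $f$ at height $2^{-J}$ and satisfies $\|g_{-J}\|_\infty\leq C\,2^{-J}$. Since $f\in C^\infty_0$ forces $\mathcal{M}_{N_0}f$ to be bounded, there exists $j_{\max}$ with $\Omega_j=\emptyset$ (and hence $a_{j,k}=0$) for all $j>j_{\max}$, so the upper tail of the decomposition is already a finite sum.

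To control the lower tail I would use that $f\in\mathcal{O}_N$ has vanishing moments up to order $N$. A standard Taylor-expansion argument applied to $\varphi_t*f$ for $\varphi\in\mathfrak{F}_{N_0}$, together with the Schwartz decay of $\varphi$, yields
\[
\mathcal{M}_{N_0}f(x)\leq C_f(1+|x|)^{-(n+N+1)},
\]
so $\Omega_{-J}\subset B(0,R_J)$ with $R_J\lesssim 2^{J/(n+N+1)}$. For fixed large $J$, set $g_J := f - \sum_{j=-J}^{j_{\max}}\sum_k\lambda_{j,k}a_{j,k}$, which by the telescoping identity equals $g_{-J}$. Then: (i) $g_J$ has compact support, contained in $\supp f\cup\overline{\Omega_{-J}}$, which lies in a cube $Q_J$ with $\ell(Q_J)\lesssim 2^{J/(n+N+1)}$; (ii) $g_J$ inherits vanishing moments up to order $N$ from $f$ and from each $a_{j,k}$; (iii) $\|g_J\|_\infty\leq C_0\,2^{-J}$. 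Consequently $g_J=\mu_J a_*$, where $\mu_J := C_0\,2^{-J}$ and $a_* := g_J/\mu_J$ is a single $(N,\infty)$ atom supported in $Q_J$.

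The final step is to show that the contribution $\mu_J\chi_{Q_J}$ does not spoil the norm estimate. Because $N\geq s_w$, the definition of $s_w$ yields $nr_w/p<n+N+1$, so one can pick $q>r_w$ with $w\in A_q$ and $nq<p(n+N+1)$. The $A_q$ doubling inequality then gives $w(Q_J)\lesssim \ell(Q_J)^{nq}$ (with constants depending on $w$ and on a fixed cube containing $\supp f$), and hence
\[
\mu_J^p\, w(Q_J) \lesssim 2^{-Jp}\cdot 2^{Jnq/(n+N+1)} = 2^{J(nq/(n+N+1)-p)} \longrightarrow 0
\]
as $J\to\infty$. Taking $J$ large enough so that $\mu_J w(Q_J)^{1/p}\leq \|f\|_{H^p(w)}$, and combining with the Str\"omberg-Torchinsky bound applied to the remaining finite piece, produces the finite decomposition $f=\mu_J a_* + \sum_{-J\leq j\leq j_{\max},\,k}\lambda_{j,k}a_{j,k}$ satisfying \eqref{eq.dec.norm}. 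I expect the main obstacle to be establishing the polynomial decay of $\mathcal{M}_{N_0}f$ from the vanishing moments of $f$ and then carefully matching this decay rate against the $A_q$-doubling growth of $w$; the hypothesis $N\geq s_w$ is precisely what makes this matching possible.
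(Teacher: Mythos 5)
Your proposal has the right general shape — truncate the Str\"omberg--Torchinsky decomposition and absorb the tail into a single big atom — and the matching argument between $N\ge s_w$ and the $A_q$-doubling growth of $w$ is correct (indeed $s_w+1 > n(r_w/p-1)$ forces $nr_w < p(n+N+1)$, so one can take $q$ slightly above $r_w$ with $nq < p(n+N+1)$). However, there is a genuine gap that leaves the ``finite'' in the theorem unproved.

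The issue is the inner sum over Whitney cubes. Writing $f=\sum_{j}\sum_k\lambda_{j,k}a_{j,k}$, you correctly observe that boundedness of $\mathcal{M}_{N_0}f$ gives a finite range of levels $j$ on the top, and you propose to cut off the bottom at $j=-J$ by absorbing $g_{-J}$ into a single atom. But for each fixed level $j$ with $-J\le j\le j_{\max}$, the Whitney decomposition of $\Omega_j$ generically has \emph{infinitely} many cubes (the cubes shrink toward $\partial\Omega_j$ even when $\Omega_j$ is bounded), so $\sum_k\lambda_{j,k}a_{j,k}$ is still an infinite sum. Your final expression $f=\mu_J a_*+\sum_{-J\le j\le j_{\max},\,k}\lambda_{j,k}a_{j,k}$ is therefore not a finite linear combination of atoms. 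The paper closes exactly this hole: after fixing the finite range of levels, it uses the pointwise bound $\sum_{k,i}\lambda_{k,i}\chi_{Q_{k,i}}\lesssim\mathcal{M}_{N_0}f$ to truncate, at each level, the sum over cubes at some index $\rho_k$, and shows that the collected residue $F_2=\sum_k\sum_{i>\rho_k}\lambda_{k,i}\beta_{k,i}$ is itself supported in a fixed ball, is bounded by a controllable constant, and inherits vanishing moments, hence forms a \emph{second} tail atom $a_\infty$. Your argument needs this extra truncation step and extra atom; without it the decomposition is not finite.

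A secondary remark: your decay estimate $\mathcal{M}_{N_0}f(x)\lesssim(1+|x|)^{-(n+N+1)}$ from the vanishing moments of $f$ is plausible but requires $N_0$ to be taken large enough relative to $N$ so that the test functions in $\mathfrak{F}_{N_0}$ have sufficient quantitative decay; this would need to be stated. The paper avoids the issue entirely by proving instead the weaker (and much cheaper) bound $\mathcal{M}_{N_0}f(x)\lesssim w(B^*)^{-1/p}\|f\|_{H^p(w)}$ for $x\notin B^*$, which follows from a local averaging argument without any use of the vanishing moments, and which already suffices for the truncation. That route is both simpler and more robust, and it also sidesteps any need to tune the rate of decay against the $A_q$ exponent: the paper's cutoff $2^{k_0}\approx w(B^*)^{-1/p}$ makes the $a_0$ contribution to the $L^p(w)$ norm automatically $O(1)$ without any growth-rate comparison.
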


The proof of Theorem~\ref{lm.fin.dec} is gotten by a close analysis of
the atomic decomposition given above.  To prove it, we use the
following technical result.  It is adapted from the corresponding
result from \cite[Chapter VIII]{ST89} (in the weighted case) and from
the proof of the unweighted version of Theorem~\ref{lm.fin.dec}
in~\cite{MSV08}. (See also the construction of the atomic
decomposition in~\cite{CW14}.)  Indeed weights play almost no role in the result
except in $(4)$.  

\begin{lemma}
\label{lm.dec.ST}
Fix $w\in A_{\infty}$, $0<p<\infty$, and $N\geq s_w$,  and let
$f\in \mathcal{O}_N\cap H^p(w)$.   For each $k\in \mathbb{Z}$, let 
\[
\Omega_k = \{x\in \mathbb{R}^n : \mathcal{M}_{N_0}f(x)>2^k\}.
\]
Then there exists a sequence
$\{\beta_{k,i}\}$ of smooth functions with compact support and a
family of cubes $\{Q_{k,i}\}$ with finite overlap that
such that the following hold:
\begin{enumerate}
\item  For each $k$ and all $i$, $Q_{k,i}\subset Q_{k,i}^* \subset
  \Omega_k$, where $Q_{k,i}^*=\tau Q_{k,i}$ for a fix constant
  $\tau>1$ and the $Q_{k,i}^*$ also have finite overlap.

\item The $\beta_{k,i}$ are $(N,\infty)$ atoms with $\supp(\beta_{k,i})
  \subset Q_{k,i}^*$.  In particular,
  $\sum_{i}|\beta_{k,i}|\lesssim C$ uniformly for all
  $k\in \mathbb{Z}$.

\item $f=\sum_{k,i}\lambda_{k,i}\beta_{k,i}$, where the convergence is
  unconditional  both pointwise and in the sense of distributions.

\item $\lambda_{k,i}\lesssim 2^k$ for all $k,i$ and
  $\sum_{k,i}\lambda_{k,i}\chi_{Q_{k,i}}\lesssim \mathcal{M}_{N_0}(f)$.
  In particular,  $\sum_{k,i}\lambda_{k,i}\beta_{k,i}$ also converges absolutely
  to $f$ in $L^q(w),$  whenever $q>1$ is such that $w\in A_q$.
\end{enumerate}
\end{lemma}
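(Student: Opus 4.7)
The strategy is to follow the Calder\'on--Zygmund-type decomposition of Str\"omberg--Torchinsky at level sets of the grand maximal function, tracking the construction carefully enough to confirm the four listed properties. For each $k\in\mathbb{Z}$ the set $\Omega_k=\{\mathcal{M}_{N_0}f>2^k\}$ is open by lower semicontinuity of $\mathcal{M}_{N_0}f$, so a Whitney decomposition produces disjoint cubes $\{Q_{k,i}\}_i$ whose dilates $Q_{k,i}^*=\tau Q_{k,i}$ lie in $\Omega_k$ and have bounded overlap in $i$. Subordinate a smooth partition of unity $\{\eta_{k,i}\}$ with $\sum_i\eta_{k,i}=\chi_{\Omega_k}$, then define $P_{k,i}$ to be the unique polynomial of degree at most $N$ with $\int(f-P_{k,i})\,Q\,\eta_{k,i}\,dx=0$ for every polynomial $Q$ of degree at most $N$, and set the ``bad'' pieces $b_{k,i}=(f-P_{k,i})\eta_{k,i}$ and $g_k=f-\sum_i b_{k,i}$. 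Since $Q_{k,i}^*$ contains a point at which $\mathcal{M}_{N_0}f\le 2^k$, a standard mean-value argument (\cite[Ch.~VIII]{ST89}) yields $|P_{k,i}|\lesssim 2^k$ on $Q_{k,i}^*$, and therefore $\|b_{k,i}\|_\infty,\|g_k\|_\infty\lesssim 2^k$.

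Next I would telescope $f=\sum_k(g_{k+1}-g_k)$, which converges pointwise and in $\mathscr{S}'$ because $g_k\to 0$ for $k\to-\infty$ (using $\|g_k\|_\infty\lesssim 2^k$ and sufficient decay of $\mathcal{M}_{N_0}f$) and $g_k\to f$ for $k\to+\infty$. Inserting the partition of unity at level $k+1$ inside each $b_{k,i}$ and subtracting the appropriate polynomial projections on $Q_{k+1,j}\cap Q_{k,i}^*$ to preserve moment vanishing, one writes $g_{k+1}-g_k=\sum_i\lambda_{k,i}\beta_{k,i}$ where $\supp(\beta_{k,i})\subset Q_{k,i}^*$, $\|\beta_{k,i}\|_\infty\le 1$, $\int x^\beta\beta_{k,i}\,dx=0$ for $|\beta|\le N$, and $\lambda_{k,i}\lesssim 2^k$. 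This gives properties (1), (2), and (3) directly from the construction.

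For property (4), the crucial pointwise estimate comes from the fact that $x\in Q_{k,i}$ forces $x\in\Omega_k$, so $\mathcal{M}_{N_0}f(x)>2^k$; combined with the bounded overlap of the Whitney cubes at each fixed level and the geometric decay in $k$ of $\lambda_{k,i}\lesssim 2^k$,
\[
\sum_{k,i}\lambda_{k,i}\chi_{Q_{k,i}}(x)
\;\lesssim\;\sum_{2^k<\mathcal{M}_{N_0}f(x)}2^k
\;\lesssim\;\mathcal{M}_{N_0}f(x).
\]
Since $|\beta_{k,i}|\le\chi_{Q_{k,i}^*}$ and the $Q_{k,i}^*$ have bounded overlap, the same bound controls $\sum_{k,i}|\lambda_{k,i}\beta_{k,i}|$ pointwise. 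The absolute convergence in $L^q(w)$ then follows if $\mathcal{M}_{N_0}f\in L^q(w)$, and this is where the hypothesis $f\in\mathcal{O}_N$ is used: a Taylor expansion of each $\varphi_t$ about a point of $\supp(f)$, exploiting the vanishing of the moments of $f$ up to order $N$, produces $\mathcal{M}_{N_0}f(x)\lesssim(1+|x|)^{-n-N-1}$, decay that is more than enough to lie in $L^q(w)$ for any $w\in A_q$, whose ball measure grows at most polynomially.

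The step I expect to be the principal technical obstacle is the normalization of $\beta_{k,i}$ in the telescoping stage: one must choose the polynomial corrections at level $k+1$ so that the resulting $\beta_{k,i}$ simultaneously retains support in $Q_{k,i}^*$, the bound $\|\beta_{k,i}\|_\infty\le 1$ after division by $\lambda_{k,i}\sim 2^k$, and full vanishing moments up to order $N$. This is the delicate combinatorial core of the Str\"omberg--Torchinsky construction; weights themselves enter only via the $L^q(w)$ assertion in (4), which as noted is essentially a consequence of the pointwise domination by $\mathcal{M}_{N_0}f$.
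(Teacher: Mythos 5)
Your proposal is correct and follows essentially the same route the paper intends: the paper gives no argument for Lemma~\ref{lm.dec.ST} beyond citing Str\"omberg--Torchinsky \cite[Ch.~VIII]{ST89} and \cite{MSV08}, and your sketch accurately reproduces that standard level-set/Whitney/polynomial-projection construction, including the telescoping $f=\sum_k(g_{k+1}-g_k)$, the bounded-overlap domination $\sum_{k,i}\lambda_{k,i}\chi_{Q_{k,i}}\lesssim\mathcal{M}_{N_0}f$, and the observation that the weight and the hypothesis $f\in\mathcal{O}_N$ (via the decay $\mathcal{M}_{N_0}f(x)\lesssim(1+|x|)^{-n-N-1}$ coming from the vanishing moments and compact support) enter only in the final $L^q(w)$ assertion of item~(4).
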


\begin{proof}[Proof of Theorem~\ref{lm.fin.dec}]
Fix $f\in \mathcal{O}_N \cap H^p(w)$; by homogeneity we may assume
without loss of generality that  $\|f\|_{H^p(w)} =1$. Then there
exists $R>1$ such that $\supp(f)\subset B(0,R) = B$.
Let $B^* = B(0,4R)$.  We claim that for all $x\notin B^*$, 
\begin{equation}
\label{eq.mnb}
\mathcal{M}_{N_0}f(x)
\lesssim w(B)^{\frac{-1}{p}}\|f\|_{H^p(w)}
 \lesssim\frac{1}{w(B^*)^{\frac{1}{p}}}.
\end{equation}
To prove this, we argue as in~\cite[Lemma~7.11]{CW14} (cf. inequality (7.7)).   There they
showed a pointwise inequality:  given any $\varphi\in \mathfrak{F}_{N_0}$ and $t>0$,
\[ |f*\varphi_t(x)| \lesssim \inf_{z\in B_*} \mathcal{M}_{N_0} f(z), \]
where $B_*=B(0,\frac{1}{2}R)$.   Therefore, we have that
\[ |f*\varphi_t(x)|^p \lesssim  \frac{1}{w(B_*)}\int_{B_*}
  \mathcal{M}_{N_0}f(z)^p w(z)\,dz  \leq \frac{1}{w(B_*)}; \]
inequality~\eqref{eq.mnb} follows if we take the supremum over all
$\varphi\in \mathfrak{F}_{N_0}$ and $t>0$, and note that since
$w\in A_\infty$, $w(B^*) \lesssim w(B_*)$. 

Now let $k_0$ be the smallest integer such that for all $k>k_0$,
$\Omega_k\subset B^*$. More precisely, by \eqref{eq.mnb} we can take
$k_0$ to be the largest integer such that
$2^{k_0}\le Cw(B^*)^{\frac{-1}p}$.

\medskip

By Lemma \ref{lm.dec.ST} we can decompose $f$ as
\[
f = \sum_{k,i}\lambda_{k,i}\beta_{k,i}
\]
where the $\beta_{k,i}$ are $(N,\infty)$ atoms.  We will show that this
sum can be rewritten as a finite sum of atoms. Set
\[
F_1 = \sum_{k\le k_0}\sum_i\lambda_{k,i}\beta_{k,i} = f-\sum_{k> k_0}\sum_i\lambda_{k,i}\beta_{k,i}.
\]
Since the $\beta_{k,i}$ are supported in $\Omega_k\subset B^*$ for all $k>k_0,$ the function $F_1$ is also supported in $B^*$. Moreover
\[
\|F_1\|_{\infty}\le \sum_{k\le k_0}
\Big\|\sum_i\lambda_{k,i}|\beta_{k,i}|\Big\|_{L^{\infty}}
\lesssim \sum_{k\le k_0}2^k
= C_12^{k_0}.
\]
Further, $F_1$ has vanishing moments up to order $N$. To see this, fix
$|\alpha|\leq N$ and $q>1$ such that $w\in A_q$.  Then, since $\supp(\beta_{k,i})\subset B^*$,
\begin{multline*}
 \Big\| \sum_{k\leq k_0} \sum_i |x^\alpha|
  |\lambda_{k,i}\beta_{k,i}| \Big\|_{L^1} 
\leq 
(4R)^{|\alpha|} \Big\| \sum_{k\leq k_0} \sum_i 
  |\lambda_{k,i}\beta_{k,i}| \Big\|_{L^q(w)}
  w^{1-q'}(B^*)^{\frac{1}{q'}} \\
\lesssim 
(4R)^{|\alpha|} \|\mathcal{M}_{N_0}f\|_L^q(w) w^{1-q'}(B^*)^{\frac{1}{q'}}
\lesssim 
(4R)^{|\alpha|} \|f\|_L^q(w) w^{1-q'}(B^*)^{\frac{1}{q'}} < \infty. 
\end{multline*}
Therefore, the series on the left-hand side converges absolutely, so
you can exchange the sum and integral; since each $\beta_{k,i}$ has
vanishing moments, so does $F_1$.   Therefore, if we set $a_{0} = C_1^{-1}2^{-k_0}F_1$ then $a_{0}$ is an $(N,\infty)$ atom supported in $B^*$.

To estimate the remaining terms, note that $f$ is a bounded function
and so there exists an integer $k_\infty>k_0$ such that
$\Omega_k=\emptyset$ for all $k\ge k_\infty$. Thus the sum
\[
\sum_{k> k_0}\sum_i\lambda_{k,i}\beta_{k,i} = \sum_{k_0<k< k_\infty}\sum_i\lambda_{k,i}\beta_{k,i}
\]
has finite many terms under the summation of $k$ indices.  Further,
since the sum
$\sum_{k,i} \lambda_{k,i}\chi_{Q_{k,i}} \lesssim \mathcal{M}_{N_0}f$ it
converges everywhere.  Therefore, for each $k_0<k< k_\infty$ there
exists an integer $\rho_k$ such that
\[
\sum_{i>\rho_k}\lambda_{k,i}|\beta_{k,i}|\le 2^{-k_\infty}w(B^*)^{-\frac1p}.
\]

If we define
\[
F_2 = \sum_{k_0<k< k_\infty}\sum_{i>\rho_k}\lambda_{k,i}\beta_{k,i},
\]
then $F_2$ is supported in $B^*$ and
\[
\|F_2\|_{\infty}\le \sum_{k_0<k< k_\infty}2^{-k_\infty}w(B^*)^{-\frac1p}
\leq C_2 w(B^*)^{-\frac1p}.
\]
Moreover, arguing as we did above for $F_1$, we have that $F_2$ has
vanishing moments for $|\alpha|\leq N$.  
Thus if we set $a_{\infty} = C_2^{-1}w(B^*)^{\frac1p}F_2$, then
$a_{\infty}$ is an $(N,\infty)$ atom.

Therefore, we have shown that we can decompose $f$ as a finite sum of
$(N,\infty)$ atoms:
\begin{equation}
\label{eq.fin.dec}
f = (C_12^{k_0})a_{0}+\sum_{k_0<k<k_\infty}\sum_{1\le i\le \rho_k}
\lambda_{k,i}\beta_{k,i}
+C_2w(B^*)^{-\frac1p}a_{\infty}.
\end{equation}

It remains to prove that~\eqref{eq.dec.norm} holds.   But by our
choice of $k_0$, we have that $\|C_12^{k_0}\chi_{B^*}\|_{L^p(w)}\le
C$, and clearly $\|w(B^*)^{-\frac1p}\chi_{B^*}\|_{L^p(w)}\le C$.  
Finally, by the weighted Fefferman-Stein inequality (see
Remark~\ref{FS-argument}), we have that
\[
\Big\|\sum_{k_0<k<k_\infty}\sum_{1\le i\le \rho_k}
\lambda_{k,i}\chi_{Q_{k,i}^*}\Big\|_{L^p(w)}
\lesssim
\Big\|\sum_{k_0<k<k_\infty}\sum_{1\le i\le \rho_k}
\lambda_{k,i}\chi_{Q_{k,i}}\Big\|_{L^p(w)}
\lesssim
\|\mathcal{M}_{N_0}f \|_{L^p(w)}\lesssim 1.
\]
Since $\|f\|_{H^p(w)}=1$, we get the desired inequality, and  this completes the proof of Theorem~\ref{lm.fin.dec}.
\end{proof}

\section{Auxiliary results}
\label{section:auxiliary}

%%%%%%%%%%%%%%%%%%%%%%%%%%%%%%%%%%

In this section we state and prove several lemmas on averaging
operators and $m$-CZOs needed for the
proofs of Theorems~\ref{thm:1} and~\ref{thm:2}.

\subsection*{Averaging operators}

We begin with a well-known result on the maximal operator $M_\mu$
defined with respect to a measure $\mu$:

\[ M_\mu f(x) = \sup_Q \frac{1}{\mu(Q)}\int_Q |f|\,d\mu \cdot
  \chi_Q(x). \]
For a proof, see~\cite[Chapter~II]{garcia-cuerva-rubiodefrancia85}.

\begin{proposition} \label{prop:mu-max}
Let $\mu$ be a doubling measure on $\R^n$.  Then the maximal operator
$M_\mu$ satisfies the weak $(1,1)$ inequality
\begin{equation} \label{eqn:mu-max1}
 \sup_{t>0} t\, \mu(\{ x \in \R^n : M_\mu f(x) > t \}) \leq
  C(\mu) \int_\subRn |f|\,d\mu, 
\end{equation}
and for $1<p<\infty$ the strong $(p,p)$ inequality
\begin{equation} \label{eqn:mu-max2}
 \int_\subRn (M_\mu f)^p\,d\mu \leq C(\mu,p)\int_\subRn
  |f|^p\,d\mu. 
\end{equation}
\end{proposition}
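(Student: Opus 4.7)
The plan is to follow the classical argument for the Hardy--Littlewood maximal operator, with Lebesgue measure replaced by the doubling measure $\mu$. I would first establish the weak-type $(1,1)$ bound \eqref{eqn:mu-max1}, then deduce the strong-type $(p,p)$ bound \eqref{eqn:mu-max2} by Marcinkiewicz interpolation against the trivial $L^\infty(\mu) \to L^\infty(\mu)$ bound (which holds with constant $1$, since $\mu(Q)^{-1}\int_Q |f|\,d\mu \leq \|f\|_{L^\infty(\mu)}$ for every cube $Q$).

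For the weak-type bound, fix $t>0$ and set $E_t = \{x\in\subRn : M_\mu f(x) > t\}$. For each $x \in E_t$ choose a cube $Q_x \ni x$ with $\mu(Q_x)^{-1}\int_{Q_x} |f|\,d\mu > t$; in particular $\mu(Q_x) \leq t^{-1}\|f\|_{L^1(\mu)}$, which bounds the diameters involved (on any bounded piece of $E_t$) and lets one apply a Vitali-type covering lemma. The covering lemma is purely geometric and depends only on the dimension $n$: it produces a disjoint subfamily $\{Q_j\} \subset \{Q_x\}_{x \in E_t}$ together with a constant $\tau = \tau(n)$ such that $E_t \subset \bigcup_j \tau Q_j$. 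Using doubling to pass from $\tau Q_j$ back to $Q_j$, and then the definition of the selected cubes together with their disjointness, I would estimate
\[
\mu(E_t) \;\leq\; \sum_j \mu(\tau Q_j) \;\leq\; C(\mu)\sum_j \mu(Q_j) \;\leq\; \frac{C(\mu)}{t}\sum_j \int_{Q_j}|f|\,d\mu \;\leq\; \frac{C(\mu)}{t}\int_\subRn |f|\,d\mu,
\]
which gives \eqref{eqn:mu-max1} after taking the supremum over $t$. Multiplying by $t$ and taking the supremum yields the stated form.

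The main (mild) obstacle is the Vitali covering step. Since the cubes we are selecting from may have arbitrary sizes, one needs to first truncate by only considering cubes with $\ell(Q_x) \leq R$ and then pass $R \to \infty$ via monotone convergence, or work with a standard $3r$-covering argument on a bounded exhaustion. Once the covering is in hand, the doubling property of $\mu$ enters only in the single estimate $\mu(\tau Q_j) \leq C(\mu)\mu(Q_j)$, so no further properties of $\mu$ are needed. With the weak $(1,1)$ and trivial $(\infty,\infty)$ endpoints in hand, Marcinkiewicz interpolation (in the measure space $(\subRn,\mu)$, with the sublinear operator $M_\mu$) delivers \eqref{eqn:mu-max2} for every $1 < p < \infty$, with constant depending only on $p$, $n$, and the doubling constant of $\mu$.
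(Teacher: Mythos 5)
Your argument is correct, and it is the standard one. The paper does not give its own proof of this proposition---it simply cites García-Cuerva and Rubio de Francia, Chapter~II---and the proof there proceeds exactly as you outline: a Vitali-type covering argument (with the usual truncation of cube sizes so the covering lemma applies) combined with doubling to obtain the weak $(1,1)$ bound, followed by Marcinkiewicz interpolation against the trivial $L^\infty(\mu)$ endpoint for the strong $(p,p)$ bound.
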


The next three lemmas on averaging operators are weighted extensions
of results from ~\cite{GK01}.  Our proofs, however, are different and
are motivated by ideas from~\cite{ST89}.

\begin{lemma}
\label{lm.Wp1}
Let $\mu$ be a doubling measure on $\mathbb{R}^n$ and fix $0<p<1$.
Then given any finite collection $\mathcal J$ of cubes and any set    $\{f_Q:\,\, Q\in
\mathcal J\}$   of non-negative integrable functions 
with $\supp(f_Q) \subset Q$, 
$$
\Big\|\sum_{Q\in\mathcal{J}}f_Q\Big\|_{L^p(\mu)} \le \,
C(\mu,p,n)\,  \Big\|\sum_{Q\in\mathcal {J }}a_1^\mu(Q)\chi_Q \Big\|_{L^p(\mu)},
$$
where
$$
a_1^\mu(Q)=\mu(Q)^{-1}\int_Q f_Q(x)\, d\mu(x).
$$
\end{lemma}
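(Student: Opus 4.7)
For $0<p<1$, $t\mapsto t^p$ is concave, so Jensen's inequality on any measurable set $E$ of finite $\mu$-measure carrying a non-negative integrand $g$ supported in $E$ yields
\[
\int g^p\,d\mu \le \mu(E)^{1-p}\Bigl(\int g\,d\mu\Bigr)^p.
\]
Applied naively to each $f_Q$ together with the pointwise bound $F^p\le\sum_Q f_Q^p$, this gives only $\|F\|_{L^p(\mu)}^p\le\sum_Q \mu(Q)(a_1^\mu(Q))^p$, which can overshoot $\|G\|_{L^p(\mu)}^p$ (with $G=\sum_Q a_1^\mu(Q)\chi_Q$) by the factor $N^{1-p}$ when $N$ identical copies of a single pair $(Q,f_Q)$ are present. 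So one has to group the $f_Q$'s at the right scale before invoking $p$-subadditivity.

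The plan is a Calder\'on--Zygmund/Whitney decomposition of $G$. For $k\in\mathbb Z$ set $\Omega_k=\{M_\mu G>2^k\}$; this is open with $\mu(\Omega_k)<\infty$ since $M_\mu$ is weak $(1,1)$ for the doubling measure $\mu$. Fix a Whitney-type decomposition $\Omega_k=\bigsqcup_j R_{k,j}$ with bounded overlap and $\ell(R_{k,j})\sim\operatorname{dist}(R_{k,j},\Omega_k^c)$, and a dilate $R_{k,j}^*=\tau R_{k,j}$, so that $\avgint_{R_{k,j}^*} G\,d\mu\lesssim 2^k$ by the Whitney property and doubling. For each $Q\in\mathcal J$ let $k(Q)$ satisfy $2^{k(Q)}\le a_1^\mu(Q)<2^{k(Q)+1}$; since $G\ge a_1^\mu(Q)\chi_Q\ge 2^{k(Q)}\chi_Q$, $Q\subset\Omega_{k(Q)-1}$, and after enlarging $\tau$ each $Q$ can be assigned to a unique Whitney cube $R_{k(Q)-1,j(Q)}$ with $Q\subset R_{k(Q)-1,j(Q)}^*$. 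Set $\mathcal J_{k,j}=\{Q\in\mathcal J:(k(Q)-1,j(Q))=(k,j)\}$ and $F_{k,j}=\sum_{Q\in\mathcal J_{k,j}}f_Q$, so $\supp F_{k,j}\subset R_{k,j}^*$, and Jensen on $R_{k,j}^*$ gives
\[
\|F_{k,j}\|_{L^p(\mu)}^p \le \mu(R_{k,j}^*)^{1-p}\Bigl(\sum_{Q\in\mathcal J_{k,j}} a_1^\mu(Q)\mu(Q)\Bigr)^p \le \mu(R_{k,j}^*)^{1-p}\Bigl(\int_{R_{k,j}^*} G\,d\mu\Bigr)^p \lesssim 2^{kp}\mu(R_{k,j}),
\]
since $\sum_{Q\in\mathcal J_{k,j}} a_1^\mu(Q)\chi_Q\le G$ and by the Whitney averaging bound. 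Applying $p$-subadditivity only at the coarse Whitney-piece level,
\[
\|F\|_{L^p(\mu)}^p \le \sum_{k,j}\|F_{k,j}\|_{L^p(\mu)}^p \lesssim \sum_k 2^{kp}\mu(\Omega_k).
\]

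The main obstacle is the final step: bounding $\sum_k 2^{kp}\mu(\Omega_k)\sim\int(M_\mu G)^p\,d\mu$ by $\int G^p\,d\mu$. Since $0<p<1$, $M_\mu$ is \emph{not} bounded on $L^p(\mu)$, so this cannot be done by a general maximal inequality. The way around it exploits the specific structure of $G$ as a sum of characteristic functions of cubes: one invokes the weighted vector-valued Fefferman--Stein estimate of Remark~\ref{FS-argument} (available because doubling implies $\mu\in A_\infty$) to replace the maximal-function level-set expression with the analogous sum for $G$ itself, losing only a constant $C(\mu,p,n)$, after which the layer-cake formula identifies the outcome with $\|G\|_{L^p(\mu)}^p$. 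A secondary technical subtlety is arranging in the grouping step that every $Q\in\mathcal J$ fits inside a single dilated Whitney cube, which is handled by a suitable choice of the dilation constant $\tau$.
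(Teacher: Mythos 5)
The Calder\'on--Zygmund/Whitney reorganization of $G$ and the use of Jensen at the Whitney scale is a sensible idea, but your final step has a genuine gap, and the proposed repair does not close it. You arrive at $\|F\|_{L^p(\mu)}^p \lesssim \sum_k 2^{kp}\mu(\Omega_k)$ with $\Omega_k=\{M_\mu G>2^k\}$, which by the layer-cake formula is comparable to $\int (M_\mu G)^p\,d\mu$. For $0<p<1$ this quantity is \emph{infinite} whenever $\mu(\R^n)=\infty$: already for $\mu$ Lebesgue and $G=\chi_Q$ one has $M\chi_Q(x)\sim |x|^{-n}$ at infinity, so $(M\chi_Q)^p$ is not integrable for $p\le 1$. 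Thus the upper bound you derive is vacuous. Your proposed fix --- invoking Remark~\ref{FS-argument} to ``replace the maximal-function level-set expression with the analogous sum for $G$'' --- is not what that remark provides. Remark~\ref{FS-argument} bounds $\|\sum_Q\lambda_Q\chi_{Q^*}\|_{L^p}$ by $\|\sum_Q\lambda_Q\chi_Q\|_{L^p}$, and its mechanism uses the inequality $\chi_{Q^*}\lesssim M(\chi_Q)$ only on the dilated cube $Q^*$, where $M(\chi_Q)$ is bounded below; it does not control the global tails of $M(\chi_Q)$, which are exactly what make $\int(M_\mu G)^p\,d\mu$ diverge. There is no form of the vector-valued Fefferman--Stein estimate that yields $L^p(\mu)$-boundedness of $M_\mu$ for $p\le1$, because that statement is false. (A minor but separate error: ``doubling implies $\mu\in A_\infty$'' is not true --- the lemma only assumes $\mu$ doubling, and only weak $(1,1)$ of $M_\mu$ is available and used.)

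For contrast, the paper's proof never goes through $\|M_\mu G\|_{L^p}$. It estimates the distribution function of $F$ directly: for each $t>0$ it introduces the good set $U_t=\{M_\mu\chi_{L_t}>\tfrac14\}$ with $L_t=\{G>t\}$, uses the weak $(1,1)$ bound (the only maximal-function input) to get $\mu(U_t)\lesssim\mu(L_t)$, and then bounds the contribution of $F$ off $U_t$ by $\tfrac1t\int_{L_t^c}G\,d\mu$ via Chebyshev and the observation that cubes meeting $U_t^c$ have at most one quarter of their $\mu$-mass inside $L_t$. Multiplying by $pt^{p-1}$ and integrating, the second term converges precisely because $p<1$ (the tail integral $\int_{G(x)}^\infty pt^{p-2}\,dt\sim G(x)^{p-1}$ is finite). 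So the restriction $p<1$ is what rescues the argument there, whereas in your scheme $p<1$ is what makes the final quantity blow up. If you want to salvage a decomposition-type proof, you would have to keep the sum $\sum_{k,j}\|F_{k,j}\|^p$ restricted to Whitney cubes that actually receive some $Q$, and find a genuinely new argument to compare that restricted sum with $\|G\|_{L^p}^p$; passing to the full $\sum_j\mu(R_{k,j})=\mu(\Omega_k)$ loses too much.
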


\begin{proof}
Let $F=\sum_{Q\in\mathcal{J}}f_Q$ and $G=\sum_{Q\in\mathcal {J
  }}a_1^\mu(Q)\chi_Q$ and for each $t>0$ let
\[
L_t = \{x\in \mathbb{R}^n : G(x)>t\}, \qquad 
U_t = \{y\in \mathbb{R}^n : M_\mu\chi_{L_t}(y)>\frac14\}.
\]
By \eqref{eqn:mu-max1} we have that
$\mu(U_t)\le C(\mu)\mu(L_t)$.
We can now estimate as follows:
\begin{align*}
\mu(\{x\in \mathbb{R}^n : F(x)>t\})
\le& \mu(U_t)+ \mu({U_t^c\cap \{x\in \mathbb{R}^n : F(x)>t\}})\\
\lesssim& \mu(L_t)+ \frac1t\int_{U_t^c}F(x)\,d\mu(x)\\
\lesssim& \mu(L_t)+ \frac1t\sum_{Q\in \mathcal{J}:Q\cap U^c_t\ne\emptyset}\int_{Q}f_Q(x)\,d\mu(x)\\
\lesssim& \mu(L_t)+ 
\frac1t\sum_{Q\in \mathcal{J}:Q\cap U^c_t\ne\emptyset}
a^\mu_1(Q)\mu(Q).
\end{align*}
If $Q\in \mathcal{J}$ is such that $Q\cap U^c_t\ne\emptyset$, then
$M_\mu\chi_{L_t}(z)\le \frac14$ for all $z\in Q\cap U^c_t$. In particular, we have
\[
\frac{\mu(L_t\cap Q)}{\mu(Q)}\le \frac14,
\]
and so $\mu(Q)\le \frac43\mu(L^c_t\cap Q)$ for all $Q\in \mathcal{J}$.
Thus we have that
\begin{align*}
\mu(\{x\in \mathbb{R}^n : F(x)>t\})
\lesssim&
\mu(L_t)+ 
\frac1t\sum_{Q\in \mathcal{J}}
a^\mu_1(Q)\mu(Q\cap L^c_t)\\
\lesssim&
\mu(L_t)+ 
\frac1t\sum_{Q\in \mathcal{J}}
a^\mu_1(Q)\int_{L^c_t}\chi_Q(x)\,d\mu(x)\\
\lesssim&
\mu(L_t)+ 
\frac1t\int_{L^c_t}G(x)\,d\mu(x).
\end{align*}

Given this estimate, if we 
multiply by $pt^{p-1}$ and integrate, by Fubini's theorem we get
\begin{align*}
\|F\|_{L^p(\mu)}^p
& = \int_0^\infty pt^{p-1}\mu(\{ x\in \R^n : F(x) >t \})\,dt \\
& \lesssim \int_0^\infty pt^{p-1}\mu(\{ x\in \R^n : G(x) >t \})\,dt 
 + \int_0^\infty pt^{p-2} \int_{\{ x\in \R^n : G(x)\leq t\}}
  G(x)\,d\mu(x) \\
& = \int_\subRn G(x)^p \,d\mu(x) 
+ \int_\subRn G(x) \int_{G(x)}^\infty pt^{p-2}\,dt \,d\mu(x) \\
& \lesssim \|G\|_{L^p(\mu)}^p.
\end{align*}
\end{proof}

\begin{lemma}
\label{lm.Wpq}
Let $\mu$ be a doubling measure on $\mathbb{R}^n$ and fix $1\leq
p<q<\infty$.  Then given any finite collection of cubes  $\mathcal{J}$
and any set    $\{f_Q:\,\, Q\in \mathcal J\}$   of non-negative integrable functions 
with $\supp(f_Q) \subset Q$,
\begin{equation}
\label{eq.wpq}
\Big\|\sum_{Q\in\mathcal{J}}f_Q\Big\|_{L^p(\mu)} \le \,
C(\mu,p,q,n)\,  \Big\|\sum_{Q\in\mathcal {J }}a_q^\mu(Q)\chi_Q \Big\|_{L^p(\mu)},
\end{equation}
where
$$
a_q^\mu(Q)=
\Big(
\frac{1}{\mu(Q)}
\int_Q |f_Q(x)|^q\, d\mu(x)
\Big)^{\frac1q}.
$$
\end{lemma}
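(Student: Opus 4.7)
My plan is to prove Lemma~\ref{lm.Wpq} by a direct duality argument, which is available to us because $p \ge 1$ (in contrast to Lemma~\ref{lm.Wp1}, whose distributional argument was tailored to $p<1$). Since $L^p(\mu)^\ast = L^{p'}(\mu)$, it suffices to estimate $\int(\sum_{Q\in\mathcal{J}}f_Q)\,g\,d\mu$ for arbitrary nonnegative $g\in L^{p'}(\mu)$ with $\|g\|_{L^{p'}(\mu)}\le 1$.

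The key step is to apply H\"older's inequality on each cube with the exponents $q$ and $q'$ supplied by the hypothesis, and then rewrite the resulting average of $g^{q'}$ as an integral against the $\mu$-maximal operator. For each $Q\in\mathcal{J}$,
\[
\int_Q f_Q\, g\, d\mu \le a_q^\mu(Q)\, \mu(Q)^{1/q}\Big(\int_Q g^{q'}\,d\mu\Big)^{1/q'}
= a_q^\mu(Q)\, \mu(Q)\Big(\frac{1}{\mu(Q)}\int_Q g^{q'}\,d\mu\Big)^{1/q'}.
\]
For every $x\in Q$ the average of $g^{q'}$ on $Q$ is bounded by $M_\mu(g^{q'})(x)$; integrating this pointwise bound over $Q$ turns the constant right-hand side into an integral:
\[
\int_Q f_Q\, g\, d\mu \le a_q^\mu(Q) \int_Q (M_\mu(g^{q'}))^{1/q'}\, d\mu.
\]

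Summing over $Q\in\mathcal{J}$ and applying the ordinary $L^p$--$L^{p'}$ H\"older inequality gives
\[
\int \Big(\sum_{Q}f_Q\Big)\, g\,d\mu
\le \Big\|\sum_{Q}a_q^\mu(Q)\chi_Q\Big\|_{L^p(\mu)}\,
\big\|(M_\mu(g^{q'}))^{1/q'}\big\|_{L^{p'}(\mu)}.
\]
To close the argument I invoke Proposition~\ref{prop:mu-max}: since $p<q$ we have $p'>q'$, so $r:=p'/q'>1$ and the strong $(r,r)$ inequality for $M_\mu$ yields
\[
\big\|(M_\mu(g^{q'}))^{1/q'}\big\|_{L^{p'}(\mu)}
=\|M_\mu(g^{q'})\|_{L^{p'/q'}(\mu)}^{1/q'}
\lesssim \|g^{q'}\|_{L^{p'/q'}(\mu)}^{1/q'}
= \|g\|_{L^{p'}(\mu)}\le 1.
\]
Taking the supremum over admissible $g$ produces the desired inequality~\eqref{eq.wpq}. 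The endpoint $p=1$ (where $p'=\infty$) requires no modification: $M_\mu$ maps $L^\infty(\mu)$ into itself with constant $1$, so the last estimate still gives $\|(M_\mu(g^{q'}))^{1/q'}\|_{L^\infty(\mu)}\le\|g\|_{L^\infty(\mu)}$.

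I do not foresee a serious obstacle; the whole proof hinges on recognizing that duality converts the $L^q$-average hypothesis into a maximal-function bound whose $L^{p'/q'}$-boundedness is precisely what the assumption $p<q$ delivers. If a difficulty arises, it would be a purely bookkeeping issue at $p=1$ concerning the precise form of duality used, but the pointwise estimate $M_\mu(g^{q'})\le\|g\|_\infty^{q'}$ disposes of that case by hand.
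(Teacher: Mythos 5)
Your proposal is correct and follows essentially the same duality argument as the paper: H\"older on each cube, the pointwise bound by $M_\mu(g^{q'})$, the global H\"older on $L^p(\mu)$--$L^{p'}(\mu)$, and closing via the $L^{p'/q'}(\mu)$-boundedness of $M_\mu$ (with $L^\infty$ for $p=1$). No substantive differences.
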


\begin{proof}
First suppose that $p>1$; we estimate by duality.  Then there exists
non-negative  $g\in L^{p'}(\mu)$, $\|g\|_{L^{p'}(d\mu)} = 1$,  such
that 
\begin{align*}
\Big\|\sum_{Q\in\mathcal{J}}f_Q\Big\|_{L^p(\mu)}
& = \sum_{Q\in\mathcal{J}}\int_Q f_Q(x)g(x)\, d\mu(x)\\
& \le
\sum_{Q\in\mathcal{J}}
\Big( \int_Qf_Q(x)^q\, d\mu(x) \Big)^{\frac1q}
\Big( \int_Qg(x)^{q'}\, d\mu(x) \Big)^{\frac1{q'}}\\
&=\sum_{Q\in\mathcal{J}}
a^\mu_q(Q)\mu(Q)
\Big[\frac1{\mu(Q)}\int_Qg^{q'}d\mu \Big]^{\frac1{q'}}\\
&\le \sum_{Q\in\mathcal{J}}
a^\mu_q(Q)\int_{Q}
M_{\mu}(g^{q'})(x)^{\frac1{q'}}
\, d\mu(x)\\
&\le \int_\subRn
\Big[\sum_{Q\in\mathcal{J}}
a^\mu_q(Q)\chi_Q\Big]
M_{\mu}(g^{q'})(x)^{\frac1{q'}}
\, d\mu(x)\\
&\le
\Big\|\sum_{Q\in\mathcal{J}}
a^\mu_q(Q)\chi_Q \Big\|_{L^p(\mu)}
\|M_{\mu}(g^{q'})^{\frac1{q'}} \|_{L^{p'}(\mu)}\\
&=
\Big\|\sum_{Q\in\mathcal{J}}
a^\mu_q(Q)\chi_Q \Big\|_{L^p(d\mu)}
\|M_{\mu}(g^{q'})\|_{L^{\frac{p'}{q'}}(\mu)}^{\frac1{q'}} \\
& \lesssim 
\Big\|\sum_{Q\in\mathcal{J}}
a^\mu_q(Q)\chi_Q \Big\|_{L^p(\mu)};
\end{align*}
the first and third inequalities follow from H\"older's inequality, and the last
from~\eqref{eqn:mu-max2} (since $p'>q'$) and the fact that
$\|g\|_{L^{p'}(\mu)}=1$.

Finally, when $p=1$ the proof is essentially the same except that we
use use the fact that $M_\mu$ is bounded on $L^\infty$.  This
completes the proof.
\end{proof}

\begin{lemma}\label{lm:AvgLpw}
Let $w\in A_{\infty}$,  and fix  $0<p<\infty$ and $\max(1,p)<q<\infty$.
Then given any  collection
of cubes $\{Q_k\}_{k=1}^{\infty}$ 
and  nonnegative integrable functions
$\{g_k\}$ with  $\supp(g_k)\subset Q_k $,
$$
\Big\|\sum_{k=1}^{\infty} g_k \Big\|_{L^p(w)} \le C(w,p,q,n)\,  \Big\|\sum_{k=1}^{\infty} \Big(
 \frac{1}{w(Q_k) }\int_{Q_k} g_k(x)^qw(x)\, dx
\Big)^{\frac1q} \chi_{Q_k}  \Big\|_{L^p(w)} .
$$
\end{lemma}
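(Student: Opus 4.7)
The plan is to reduce this to Lemmas~\ref{lm.Wp1} and~\ref{lm.Wpq} specialized to the measure $d\mu = w\,dx$. Since $w\in A_\infty$, the measure $w\,dx$ is doubling on $\R^n$, so the hypotheses of those two lemmas are satisfied with $\mu = w\,dx$.

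First, by monotone convergence it suffices to establish the inequality for an arbitrary finite subcollection $\{Q_1,\ldots,Q_N\}$, with a constant independent of $N$; letting $N\to\infty$ then yields the stated estimate for the countable family.

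Next I would split into two cases based on the position of $p$ relative to $1$. If $p\ge 1$, then $\max(1,p) = p$, so $1\le p < q <\infty$ and Lemma~\ref{lm.Wpq} applied with $\mu=w\,dx$ and $f_Q = g_k$ immediately gives
\[
\Big\|\sum_{k=1}^N g_k\Big\|_{L^p(w)}
\le C \Big\|\sum_{k=1}^N a_q^w(Q_k)\chi_{Q_k}\Big\|_{L^p(w)},
\]
which is exactly the desired bound. If $0<p<1$, then Lemma~\ref{lm.Wp1} applied with $\mu=w\,dx$ produces the same inequality but with $a_1^w(Q_k)=w(Q_k)^{-1}\int_{Q_k} g_k\,w\,dx$ in place of $a_q^w(Q_k)$. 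To upgrade from $a_1^w$ to $a_q^w$, I would apply Jensen's inequality with respect to the probability measure $w(Q_k)^{-1}\chi_{Q_k}w\,dx$, which gives $a_1^w(Q_k)\le a_q^w(Q_k)$ pointwise (recall $q>1$). Summing against $\chi_{Q_k}$ and taking the $L^p(w)$ quasi-norm then yields the claimed inequality in this case too.

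There is no substantive obstacle: the only conceptual point is to notice that the hypothesis $w\in A_\infty$ buys doubling of $w\,dx$, after which the lemma is a packaging of the two previous averaging lemmas, combined (in the range $p<1$) with Jensen's inequality to promote the $L^1$ average to an $L^q$ average. The constant naturally depends on $w$ (through its doubling constant, hence through $[w]_{A_\infty}$), on $p$, $q$, and on $n$, as stated.
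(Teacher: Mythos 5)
Your proof is correct and takes essentially the same approach as the paper: both reduce to the doubling-measure averaging lemmas (Lemmas~\ref{lm.Wp1} and~\ref{lm.Wpq}) with $\mu = w\,dx$, pass from finite to countable families by a limiting argument (the paper uses Fatou, you use monotone convergence), and in the range $0<p<1$ upgrade the $L^1$ average to the $L^q$ average via Jensen's inequality, which the paper states as the same pointwise inequality without naming it.
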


\begin{proof}
Since $w\in A_{\infty}$, the measure $\mu = w(x)\,dx$ is doubling.  If
$p \geq 1$, then if we 
fix an arbitrary integer $K$ and apply Lemma~\ref{lm.Wpq} to the functions
$\{g_k\}_{k=1}^K$, we immediately get
$$
\big\| \sum_{k=1}^K g_k \big\|_{L^p(w)} \le  C(w,p,q,n)
\Big\| \sum_{k=1}^K 
\Big(\frac1{w(Q_{k})} \int_{Q_{k}} g_k(x)^qw(x)\, dx\Big)^{\frac1q}
\chi_{Q_k}  \Big\|_{L^p(w)}.  
$$
The desired inequality now follows from Fatou's lemma.

When $0<p<1$, we can apply Lemma~\ref{lm.Wp1} to get the same
conclusion, using the fact that 
\[
\frac1{w(Q_{k})} \int_{Q_{k}} g_k(x)w(x)\, dx  \leq
\bigg(\frac1{w(Q_{k})} \int_{Q_{k}} g_k(x)^qw(x)\, dx\bigg)^{\frac1q}.
\]
\end{proof}

\subsection*{Estimates for $m$-CZOs}
In this section we prove three estimates on $m$-CZOs.

\begin{lemma}\label{lm:TL2w}
Let $T$ be the operator as in Theorem \ref{thm:1} and fix $w\in
A_{q}$, $q>1$. Then given any collection $f_1,\ldots,f_m$ of bounded
functions of compact support,
\[
\|T(f_1,f_2,\ldots,f_m)\|_{L^q(w)}\le C\|f_1\|_{L^q(w)}\|f_2\|_{L^\infty}\cdots \|f_m\|_{L^\infty}.
\]
\end{lemma}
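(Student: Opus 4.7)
The strategy is to combine the pointwise sparse domination theorem for $m$-CZOs (Proposition~\ref{thm:LernNaz}) with the weighted $L^q$ bound for linear sparse operators (Proposition~\ref{prop:linear-sparse}). The key observation is that when all but one of the inputs lie in $L^\infty$, the corresponding averages $\avgint_Q |f_k|$ are trivially dominated by $\|f_k\|_\infty$, which collapses the multilinear sparse operator to (a constant multiple of) a linear one acting on $f_1$.

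More precisely, first I would apply Proposition~\ref{thm:LernNaz} to obtain $3^n$ sparse families $\Sp_j$ such that
\[
|T(f_1,\ldots,f_m)(x)| \le C \sum_{j=1}^{3^n} T^{\Sp_j}(|f_1|,\ldots,|f_m|)(x).
\]
Next, for each sparse family $\Sp_j$ and each cube $Q\in\Sp_j$, I would use the pointwise estimate $\avgint_Q |f_k|\,dy_k \le \|f_k\|_\infty$ for $k=2,\ldots,m$ to obtain
\[
T^{\Sp_j}(|f_1|,\ldots,|f_m|)(x) \le \prod_{k=2}^m \|f_k\|_\infty \cdot \sum_{Q\in\Sp_j} \avgint_Q |f_1(y_1)|\,dy_1 \cdot \chi_Q(x),
\]
and the last sum is exactly $T^{\Sp_j}(|f_1|)(x)$ for the linear sparse operator associated to $\Sp_j$.

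Finally, since $w \in A_q$ with $q>1$, I would apply Proposition~\ref{prop:linear-sparse} to each $T^{\Sp_j}(|f_1|)$, obtaining
\[
\|T^{\Sp_j}(|f_1|)\|_{L^q(w)} \le C[w]_{A_q}^{\max(1,1/(q-1))} \|f_1\|_{L^q(w)}.
\]
Summing over the $3^n$ families and multiplying by $\prod_{k=2}^m \|f_k\|_\infty$ yields the claimed bound. There is no substantial obstacle here: the hypothesis $w\in A_q$ matches exactly the hypothesis required by the linear sparse-operator bound, and the kernel regularity assumption (any $N\ge 1$ suffices) is compatible with the statement of Theorem~\ref{thm:1}. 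The only mild point to verify is that $f_1,\ldots,f_m$ being bounded with compact support is enough to legitimately invoke Proposition~\ref{thm:LernNaz}, which is explicitly part of that proposition's hypotheses.
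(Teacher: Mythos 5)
Your proof is correct and follows essentially the same route as the paper: sparse domination via Proposition~\ref{thm:LernNaz}, then the trivial estimate $\avgint_Q |f_k|\,dy_k \le \|f_k\|_\infty$ for $k\ge 2$ to reduce the multilinear sparse operator to a linear one, then Proposition~\ref{prop:linear-sparse}. No meaningful differences to report.
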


\begin{proof} 
  By the domination estimate in Proposition~\ref{thm:LernNaz} it will
  suffice to prove this estimate for any multilinear sparse operator
  $T^\Sp$ and non-negative functions $f_1,\ldots,f_m$.  By the definition of the
  sparse operator we have
\[ T^\Sp(f_1,\ldots,f_m)\leq \|f_2\|_\infty\cdots\|f_m\|_\infty 
\sum_{Q\in \Sp}\avgint_Q f_1\,dy \cdot \chi_Q 
= \|f_2\|_\infty\cdots\|f_m\|_\infty T^\Sp f_1, \]
where on the right-hand side we now have a linear sparse operator.
But then by Proposition~\ref{prop:linear-sparse} we have that
$$\|T^\Sp(f_1,\cdots,f_m)\|_{L^{q}(w)}
\lesssim \|T^\Sp f_1\|_{L^q(w)}\|f_2\|_\infty\cdots\|f_m\|_\infty
\lesssim \|f_1\|_{L^q(w)}\|f_2\|_\infty\cdots\|f_m\|_\infty.$$
\end{proof}

The following lemma was first prove in \cite{GNNS17A}.  For
completeness we include its short proof.

\begin{lemma}
\label{lm.4A1}
For $1\le k\le m$ let $a_k$ be an $(N,\infty)$ atom supported in $Q_k$
and let $c_k$ be the center of $Q_k$.  Then, given any non-empty
subset $\Lambda \subset \{1,\ldots,m\}$, we have that for all $y\notin \cup_{k\in \Lambda}Q_k^*$,
\begin{equation}
\label{eq.4A1}
|T(a_1,\ldots,a_m)(y)|
\lesssim
\dfrac{\min\{\ell(Q_k) : k\in \Lambda\}^{n+N+1}}
{\big(\sum_{k\in \Lambda}|y-c_k|\big)^{n+N+1}}.
\end{equation}
In particular, we always have that
\begin{equation}
\label{eq.4AA}
|T(a_1,\ldots,a_m)|\chi_{(Q^*_1\cap \ldots\cap Q_m^*)^c}
\lesssim
\prod_{k=1}^m\big(M(\chi_{Q_k})\big)^\frac{n+N+1}{mn}.
\end{equation}
\end{lemma}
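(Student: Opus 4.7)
The plan is to prove \eqref{eq.4A1} first and deduce \eqref{eq.4AA} from it. For \eqref{eq.4A1}, fix a nonempty $\Lambda\subset\{1,\ldots,m\}$ and choose $k_0\in\Lambda$ minimizing $\ell(Q_{k_0})$. The idea is to exploit the vanishing moments of $a_{k_0}$ by Taylor expanding $K(y,y_1,\ldots,y_m)$ in its $y_{k_0}$ variable about $c_{k_0}$ to order $N$: since $a_{k_0}$ annihilates every polynomial in $y_{k_0}-c_{k_0}$ of degree $\le N$, only the remainder $R_N$ survives integration against $a_{k_0}$. The smoothness condition \eqref{eqn:kernelregularity} applied with $\alpha_{k_0}=N+1$ and the other indices zero, together with $|y_{k_0}-c_{k_0}|\lesssim\ell(Q_{k_0})$, gives
\[
|R_N(y,y_1,\ldots,y_m)|\lesssim \ell(Q_{k_0})^{N+1}\Big(\sum_{k,l=0}^m|y_k-y_l|\Big)^{-(mn+N+1)}.
\]
The Taylor evaluation point lies inside $Q_{k_0}$, but since $y\notin Q_{k_0}^*$ the relevant pairwise distances change only by constants when this point is replaced by $y_{k_0}$.

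Next I would translate the hypothesis $y\notin\bigcup_{k\in\Lambda}Q_k^*$ into a geometric lower bound. For $y_k\in Q_k$ with $k\in\Lambda$, one has $|y-y_k|\sim|y-c_k|$, and hence
\[
\sum_{k,l=0}^m|y_k-y_l|\gtrsim D+\sum_{k\notin\Lambda}|y-y_k|,\qquad D:=\sum_{k\in\Lambda}|y-c_k|.
\]
Substituting this into the remainder bound, using $|a_k|\le\chi_{Q_k}$, and integrating, the $\Lambda$-indexed variables contribute $\prod_{k\in\Lambda}|Q_k|=\prod_{k\in\Lambda}\ell(Q_k)^n$, while extending each integration over $y_k$ with $k\notin\Lambda$ to $\mathbb{R}^n$ and applying the elementary estimate
\[
\int_{\mathbb{R}^{sn}}\Big(D+\sum_{k=1}^s|u_k|\Big)^{-\alpha}du\sim D^{sn-\alpha}\qquad(\alpha>sn),
\]
with $s=m-|\Lambda|$ and $\alpha=mn+N+1$, contributes $D^{-|\Lambda|n-(N+1)}$. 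This gives
\[
|T(a_1,\ldots,a_m)(y)|\lesssim \frac{\ell(Q_{k_0})^{N+1}\prod_{k\in\Lambda}\ell(Q_k)^n}{D^{|\Lambda|n+N+1}},
\]
and $\ell(Q_k)\lesssim|y-c_k|\le D$ for each $k\in\Lambda\setminus\{k_0\}$ (valid because $y\notin Q_k^*$) absorbs $\prod_{k\in\Lambda\setminus\{k_0\}}\ell(Q_k)^n$ into $D^{(|\Lambda|-1)n}$, leaving the desired $\ell(Q_{k_0})^{n+N+1}/D^{n+N+1}$.

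For \eqref{eq.4AA}, given $y\in(Q_1^*\cap\cdots\cap Q_m^*)^c$ I would set $\Lambda:=\{k:y\notin Q_k^*\}\ne\emptyset$ and apply \eqref{eq.4A1}. Since $M(\chi_{Q_k})(y)\sim(\ell(Q_k)/|y-c_k|)^n$ for $k\in\Lambda$ and $M(\chi_{Q_k})(y)\sim 1$ otherwise, raising the target inequality to the $m/(n+N+1)$ power reduces it to $\ell(Q_{k_0})^m\prod_{k\in\Lambda}|y-c_k|\lesssim D^m\prod_{k\in\Lambda}\ell(Q_k)$; this follows from $\ell(Q_{k_0})^{|\Lambda|}\le\prod_{k\in\Lambda}\ell(Q_k)$ (by minimality of $k_0$), $\ell(Q_{k_0})^{m-|\Lambda|}\le D^{m-|\Lambda|}$, and $\prod_{k\in\Lambda}|y-c_k|\le D^{|\Lambda|}$. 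The main technical care lies in tracking the exponent split $(mn+N+1)=(|\Lambda|n+N+1)+(m-|\Lambda|)n$ and in verifying that the intermediate-point issue in the Taylor remainder does not spoil the pairwise-distance estimates; both become routine once the partition of indices by membership in $\Lambda$ is made explicit.
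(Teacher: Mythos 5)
Your proof is correct and follows essentially the same approach as the paper: Taylor-expand the kernel in the $y_{k_0}$ variable about $c_{k_0}$ to exploit the vanishing moments of $a_{k_0}$, bound the remainder using \eqref{eqn:kernelregularity}, and integrate. The only small organizational difference is in the integration step, where the paper simply bounds $|a_j|\le 1$ and extends all of $y_2,\ldots,y_m$ to $\mathbb{R}^n$ at once (giving the $D^{-(n+N+1)}$ factor directly), whereas you keep the $\Lambda$-indexed variables restricted to their cubes, pick up $\prod_{k\in\Lambda}\ell(Q_k)^n$, and then absorb the extra factors using $\ell(Q_k)\lesssim D$ for $k\in\Lambda\setminus\{k_0\}$; both routes are valid and yield \eqref{eq.4A1}, and your derivation of \eqref{eq.4AA} mirrors the paper's algebra.
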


\begin{proof}
Without loss of generality we may assume that $\Lambda=\{1,\ldots,r\}$ for some $1\le r\le m$ and that
\[
\ell(Q_1) = \min\{\ell(Q_k) : k\in \Lambda\}.
\]
Fix $y\notin \cup_{k\in \Lambda}Q_k^*$; because $a_1$ has vanishing
moments up to order $N$, we can rewrite
\begin{align}
\notag
T(a_1,\ldots,a_m)(y) =&
 \int_{\mathbb{R}^{mn}}K(y,y_1,\ldots,y_m)a_1(y_1)\cdots a_m(y_m)d\vec{y}\\
\notag
 =&
 \int_{\mathbb{R}^{mn}}\big[K(y,y_1,\ldots,y_m)-P_N(y,y_1,y_2,\ldots,y_m)\big]
 a_1(y_1)\cdots a_m(y_m)d\vec{y}\\
  \label{eq.4A2}
 =&
 \int_{\mathbb{R}^{mn}}K^1(y,y_1,y_2,\ldots,y_m)
 a_1(y_1)\cdots a_m(y_m)d\vec{y},
\end{align}
where 
\begin{equation*}
P_N(y,y_1,y_2,\ldots,y_m) = \sum_{|\alpha|\le N}\frac{1}{\alpha!}
\partial^{\alpha}_{2}K(y,c_1,y_2,\ldots,y_m)(y_1-c_1)^{\alpha}
\end{equation*}
is the Taylor polynomial of degree $N$ of $K(y,\cdot,y_2,\ldots,y_m)$ at $c_1$ and
\begin{equation}
\label{eq.4A3}
K^1(y,y_1,\ldots,y_m) = K(y,y_1,\ldots,y_m)-P_N(y,y_1,y_2,\ldots,y_m).
\end{equation}

By the smoothness condition of the kernel and the fact that
$|y-y_k|\approx |y-c_k|$
for all $k \in\Lambda$ and $y_k\in Q_k$
we have that
\begin{align*}
\big|K(y,y_1,\ldots,y_m)-P_N(y,c_1,y_2,\ldots,y_m)\big|\lesssim&
|y_1-c_1|^{N+1}\Big(
\sum_{k\in \Lambda}|y-c_k|+\sum_{j=2}^m|y-y_j|
\Big)^{-mn-N-1}.
\end{align*}
Thus,
\begin{align*}
|T(a_1,\ldots,a_m)(y)| \lesssim&
\int_{\mathbb{R}^{mn}}\frac{|y_1-c_1|^{N+1}|a_1(y_1)|\cdots |a_m(y_m)|}
 {\Big(
\sum_{k\in \Lambda}|y-c_k|+\sum_{j=2}^m|y-y_j|
\Big)^{mn+N+1}}
 d\vec{y}\\
 \lesssim&
 \int_{\mathbb{R}^{(m-1)n}}\frac{
 \ell(Q_1)^{n+N+1}}
 {\Big(
\sum_{k\in \Lambda}|y-c_k|+\sum_{j=2}^m|y_j|
\Big)^{mn+N+1}}
 dy_2\cdots dy_m\\
 \lesssim&
 \frac{\ell(Q_1)^{n+N+1}}
 {\Big(
\sum_{k\in \Lambda}|y-c_k|
\Big)^{n+N+1}},
\end{align*}
which implies \eqref{eq.4A1}.

To prove \eqref{eq.4AA},  fix $y\in (Q_1^*\cap\ldots\cap Q_m^*)^c$;
then there exists a non-empty subset $\Lambda$ of $\{1,\ldots,m\}$
such that $y\notin Q_k^*$ for all $k\in \Lambda$ and $y\in Q_l^*$ for
$l\notin \Lambda.$ Then by~\eqref{eq.4A1} we have that
\begin{multline*}
|T(a_1,\ldots,a_m)(y)|
\lesssim
\dfrac{\min\{\ell(Q_k) : k\in \Lambda\}^{n+N+1}}
{\big(\sum_{k\in \Lambda}|y-c_k|\big)^{n+N+1}}\\
\lesssim
\prod_{k\in\Lambda}
\Big(
\frac{\ell(Q_k)}{\ell(Q_k)+|y-c_k|}
\Big)^{\frac{n+N+1}{mn}}
\lesssim
\prod_{k=1}^m
\Big(
\frac{\ell(Q_k)}{\ell(Q_k)+|y-c_k|}
\Big)^{\frac{n+N+1}{mn}}.
\end{multline*}
Inequality \eqref{eq.4AA} follows from the definition of the maximal operator.
\end{proof}

%%%%
\begin{lemma}\label{lm.4A4}
Given $w\in A_q$, $1\le q<\infty$, for  $1\le k\le m$  let  $a_k$ be
an $(N,\infty)$ atom supported in $Q_k$ and let $c_k$ be the center of
$Q_k$.  
Suppose $Q_1$ is the cube such that $\ell(Q_1) =  \min\{\ell(Q_k) : 1\le k\le m\}$.
Then
\begin{equation}
\label{eq.4A5}
\|T(a_1,\ldots,a_m)\chi_{Q_1^{*}}\|_{L^{q}(w)}
\lesssim
w(Q_1)^{\frac{1}{q}}
\prod_{l=1}^m
\inf_{z\in Q_1}
M(\chi_{Q_l^{}})(z)^\frac{n+N+1}{mn}.
\end{equation}
\end{lemma}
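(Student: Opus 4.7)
The plan is to decompose $Q_1^*$ as the disjoint union of $I := \bigcap_{k=1}^m Q_k^*$ and $Q_1^*\setminus I$, and control the two resulting pieces by completely different tools: the pointwise kernel estimate \eqref{eq.4AA} from Lemma~\ref{lm.4A1} on $Q_1^*\setminus I$, and the weighted $L^q$-boundedness from Lemma~\ref{lm:TL2w} on $I$.

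On $Q_1^*\setminus I$ we have $y\in (Q_1^*\cap\cdots\cap Q_m^*)^c$, so \eqref{eq.4AA} yields
\[
|T(a_1,\ldots,a_m)(y)|\lesssim \prod_{l=1}^m M(\chi_{Q_l})(y)^{(n+N+1)/(mn)}.
\]
The elementary formula $M(\chi_Q)(x)\approx \ell(Q)^n/(\ell(Q)+|x-c_Q|)^n$, together with the minimality $\ell(Q_l)\ge \ell(Q_1)$ and the bound $|x-z|\lesssim \ell(Q_1)\le \ell(Q_l)$ for $x\in Q_1^*$, $z\in Q_1$, gives $M(\chi_{Q_l})(y)\approx \inf_{z\in Q_1}M(\chi_{Q_l})(z)$ uniformly in $y\in Q_1^*$ and in $l$. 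Pulling this essentially constant factor out, $\|\chi_{Q_1^*}\|_{L^q(w)}=w(Q_1^*)^{1/q}\lesssim w(Q_1)^{1/q}$ by doubling, which gives \eqref{eq.4A5} on this piece.

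For the piece on $I$ we may assume $I\neq \emptyset$, otherwise there is nothing to prove. In that case every $Q_k^*$ meets $Q_1^*$; because $\ell(Q_k)\ge \ell(Q_1)$, this forces $Q_1\subset CQ_k$ for every $k$, and hence $\inf_{z\in Q_1}M(\chi_{Q_k})(z)\gtrsim 1$. Consequently the right-hand side of \eqref{eq.4A5} is already $\gtrsim w(Q_1)^{1/q}$, so it suffices to show the cruder estimate $\|T(a_1,\ldots,a_m)\chi_{Q_1^*}\|_{L^q(w)}\lesssim w(Q_1)^{1/q}$. Since $w\in A_q$, choose $r\ge q$ with $r>1$ and $w\in A_r$ (take $r=q$ if $q>1$, $r=2$ if $q=1$). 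Lemma~\ref{lm:TL2w} then gives
\[
\|T(a_1,\ldots,a_m)\|_{L^r(w)}\lesssim \|a_1\|_{L^r(w)}\|a_2\|_{\infty}\cdots\|a_m\|_{\infty}\le w(Q_1)^{1/r},
\]
and H\"older's inequality together with doubling yields
\[
\|T(a_1,\ldots,a_m)\chi_{Q_1^*}\|_{L^q(w)}\le \|T(a_1,\ldots,a_m)\|_{L^r(w)}\,w(Q_1^*)^{1/q-1/r}\lesssim w(Q_1)^{1/q}.
\]

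There is no deep obstacle here; the argument is a routine combination of the two ingredients supplied by Lemmas~\ref{lm.4A1} and~\ref{lm:TL2w}. The two technical points that require care are (i) the uniform comparison $M(\chi_{Q_l})\approx \inf_{Q_1}M(\chi_{Q_l})$ on $Q_1^*$, which relies crucially on $\ell(Q_1)$ being the smallest sidelength, and (ii) the small detour through $L^r(w)$ needed to accommodate the endpoint $q=1$, which is not directly covered by Lemma~\ref{lm:TL2w}.
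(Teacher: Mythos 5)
Your proof is correct, but it takes a genuinely different route from the paper's. The paper dichotomizes on the cube geometry: either $Q_1^*\cap Q_k^*\ne\emptyset$ for every $k$, in which case it applies Lemma~\ref{lm:TL2w} on all of $\R^n$ and uses $\inf_{Q_1}M(\chi_{Q_k})\gtrsim1$; or some $Q_k^*$ misses $Q_1^*$, in which case it redoes the Taylor expansion of the kernel against a fixed set $\Lambda$ of ``far'' cubes to obtain a uniform $L^\infty$ bound \eqref{eq.4A7}, which is then combined with doubling. You instead decompose the \emph{domain} $Q_1^*$ into $I=\bigcap_k Q_k^*$ and its complement in $Q_1^*$, and on the complement you simply quote the already-proved pointwise estimate \eqref{eq.4AA} from Lemma~\ref{lm.4A1} plus the observation that $M(\chi_{Q_l})$ is essentially constant on $Q_1^*$ (because $\ell(Q_1)$ is the smallest sidelength) and so can be replaced by its infimum over $Q_1$. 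This avoids re-proving the Taylor estimate and covers, in one stroke, both the case where some $Q_k^*$ misses $Q_1^*$ and the case where all $Q_k^*$ meet $Q_1^*$ but have empty common intersection, which the paper treats by two different mechanisms. The part on $I$ reduces to the crude bound $\lesssim w(Q_1)^{1/q}$, exactly as in the paper's first case. You are also more careful than the paper about the endpoint $q=1$: since Lemma~\ref{lm:TL2w} is stated only for $q>1$, your detour through $L^r(w)$ with $r>1$, $w\in A_r$, followed by H\"older and doubling, fills in a step the paper elides with a terse ``WLOG $q>1$.'' The one small cosmetic point is that the second half of your argument actually bounds $\|T(a_1,\ldots,a_m)\chi_{Q_1^*}\|_{L^q(w)}$ (not just the piece on $I$), so strictly speaking the spatial decomposition is not needed once $I\ne\emptyset$; but this does no harm.
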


\begin{proof}
Since the $A_p$ classes are nested, we may assume without loss of
generality that $q>1$.  
  To prove \eqref{eq.4A5} we consider two cases: $Q_1^{*}\cap Q_k^{*}\ne \emptyset$ for all
  $2\le k\le m$ or this intersection is empty for at least one value
  of $k$. In the first case, since
  $\ell(Q_1) = \min\{\ell(Q_k) : 1\le k\le m\}$ we have
  $Q_1^*\subset 3Q_k^{*}$ for all $1\le k\le m$. This implies
\[
\inf_{z\in Q_1}M(\chi_{Q_k})(z)\gtrsim 1,
\]
for all $1\le k\le m$, and so Lemma \ref{lm:TL2w} yields
\begin{multline}
\|T(a_1,\ldots,a_m)\chi_{Q_1^{*}}\|_{L^q(w)}
\le
\|T(a_1,\ldots,a_m)\|_{L^q(w)}\\
\lesssim
\|a_1\|_{L^q(w)}\|a_2\|_{L^\infty}\cdots \|a_m\|_{L^\infty}
\label{eq.4A6}
\lesssim
w(Q_1)^{\frac{1}{q}}
\prod_{k=1}^m\inf_{z\in Q_1}M(\chi_{Q_k})(z)^\frac{n+N+1}{mn}.
\end{multline}

In the second case, since $Q_1^{*}\cap Q_k^{*}=\emptyset$ for some $k$, the set
\[
\Lambda = \{2\le k\le m : Q_1^{*}\cap Q_k^{*}=\emptyset\}
\]
is non-empty.   Fix any point $y\in \mathbb{R}^n$.  Then arguing as
in the previous proof we have that
\begin{equation}
\label{eq.Tay}
T(a_1,\ldots,a_m)(y)=
 \int_{\mathbb{R}^{mn}}K^1(y,y_1,y_2,\ldots,y_m)
 a_1(y_1)\cdots a_m(y_m)d\vec{y},
\end{equation}
where $K^1(y,y_1,\ldots,y_m)$ is defined by \eqref{eq.4A3}.
For $y_1\in Q_1$ we have that for some $\xi_1\in Q_1$ and for all
$y_l\in Q_l$, $1\le l\le m$, 
\begin{equation}
\label{eq.K1y}
\big|K^1(y,y_1,\ldots,y_m)\big|\le
C\ell(Q_1)^{N+1}\Big(
|y-\xi_1|+\sum_{j=2}^m|y-y_j|
\Big)^{-mn-N-1}.
\end{equation}

For all $k\in \Lambda$, since $Q_1^{*}\cap Q_k^{*}=\emptyset$, $|y-\xi_1|+|y-y_k|\ge |\xi_1-y_k| \gtrsim|c_1-c_k|$.
Therefore, for all $y_1\in Q_1^*$ and $y_k\in Q_k$, $k\in \Lambda$,
\[
\big|K^1(y,y_1,\ldots,y_m)\big|\lesssim
\ell(Q_1)^{N+1}\Big(
\sum_{k\in \Lambda}|c_1-c_k|
+\sum_{j=2}^{m}|y-y_j|
\Big)^{-mn-N-1},
\]
If we combine this inequality with \eqref{eq.Tay}, we get
\[
|T(a_1,\ldots,a_m)(y)|
\lesssim
\dfrac{\ell(Q_1)^{n+N+1}}
{\big(\sum_{k\in \Lambda}|c_1-c_k|\big)^{n+N+1}}
\lesssim
\dfrac{\ell(Q_1)^{n+N+1}}
{\big(\sum_{k\in \Lambda}[\ell(Q_1)+|c_1-c_k|+\ell(Q_k)]\big)^{n+N+1}}.
\]
Since $Q_1^*\subset 3Q_l^{*}$ for all $l\notin \Lambda,$
the last inequality gives us
\begin{equation}
\label{eq.4A7}
\|T(a_1,\ldots,a_m)\|_{L^\infty}
\lesssim
\prod_{k=1}^m\inf_{z\in Q_1}M(\chi_{Q_k})(z)^\frac{n+N+1}{mn};
\end{equation}
since $w\in A_q$ is doubling, this implies that
\begin{equation}
\label{eq.4A8}
\|T(a_1,\ldots,a_m)\chi_{Q_1^{*}}\|_{L^q(w)}
\lesssim
w(Q_1)^{\frac{1}{q}}
\prod_{k=1}^m\inf_{z\in Q_1}M(\chi_{Q_k})(z)^\frac{n+N+1}{mn}.
\end{equation}
This completes the proof.
\end{proof}

\section{Proof of Theorem \ref{thm:1}}
\label{section:proof1}

For $1\le k\le m$, let $w_k\in A_\infty$ and
fix arbitrary functions $f_k\in H^{p_k}(w_k)\cap \mathcal{O}_N(\mathbb{R}^n)$. 
By Theorem~\ref{lm.fin.dec}, we have the finite atomic decompositions
\begin{equation}
\label{eq.fk.dec}
  f_k = \sum_{j_k=1}^{N_0}\lambda_{k,j_k}a_{k,j_k},
\end{equation}
  where $\lambda_{k,j_k}\ge 0$ and $a_{k,j_k}$ are $(N,\infty)$-atoms that satisfy
  $$
  \supp(a_{k,j_k})\subset Q_{k,j_k},\quad |{a_{k,j_k}}|\le\chi_ {Q_{k,j_k}},\quad \int_{Q_{k,j_k}}x^{\alpha}a_{k,j_k}(x)dx=0$$
for all ${|\alpha|}\le N$, and
\begin{equation}\label{eq.Hpwk}
\Big\|\sum_{j_k}\lambda_{j_k}\chi_{Q_{j_k}}\Big\|_{L^{p_k}(w_k)}
\le C\|f_k\|_{H^{p_k}(w_k)}.
\end{equation}

Set $\overline{w} = \prod_{k=1}^{m}w_k^{\frac{p}{p_k}}$.
Again by Theorem~\ref{lm.fin.dec}, it will suffice to prove that
\begin{equation}\label{eq.TLpw}
\|T(f_1,\ldots,f_m)\|_{L^p(\overline{w})} \lesssim
\prod_{k=1}^m
\Big\|
\sum_{j_k}{\lambda_{k,j_k}}
\chi_{Q_{k,j_k}}
\Big\|_{L^{p_k}(w_k)}.
\end{equation}

Since $T$ is $m$-linear, we have that  for a.e. $x\in\mathbb R^n$,
  \begin{equation}
\label{eq.4A9}
  T(f_1,\ldots,f_m)(x) =
  \sum_{j_1}\cdots\sum_{j_m}\lambda_{1,j_1}\ldots \lambda_{m,j_m}
  T(a_{1,j_1},\ldots,a_{m,j_m})(x). 
  \end{equation}
Given a cube $Q,$ let $Q^*=2\sqrt{n} Q$.  
 For each $m$-tuple, $(j_1,\ldots,j_m)$,  define $R_{j_1,\ldots,j_m}$
 to be the smallest cube  among $Q_{1,j_1}^*,\ldots,Q_{m,j_m}^*$.
To estimate $\|T(f_1,\ldots,f_m)\|_{L^p(\overline{w})}$ we will
split $T(f_1,\ldots,f_m)$ into two parts:
  $$
  {|T(f_1,\ldots,f_m)(x)|}\le G_1(x) +G_2(x),
  $$
  where 
$$
G_1(x) = \sum_{j_1}\cdots\sum_{j_m}{\lambda_{1,j_1}}\ldots
  {\lambda_{m,j_m}} {|T(a_{1,j_1},\cdots,a_{m,j_m})|}\chi_{R_{j_1,\ldots,j_m}}(x)$$
  and
  $$G_2(x) = \sum_{j_1}\cdots\sum_{j_m}{\lambda_{1,j_1}}\cdots
  {\lambda_{m,j_m}} {|T(a_{1,j_1},\ldots,a_{m,j_m})|}\chi_{(R_{j_1,\ldots,j_m})^c}(x).$$

We first estimate $\|G_2\|_{L^p(\overline{w})}$.  By \eqref{eq.4AA} we
have that
\begin{align*}
|T(a_{1,j_1}, \dots , a_{m,j_m})(x)|
\chi_{(R_{j_1,\ldots,j_m})^c}(x)
\lesssim&
\prod_{k=1}^m
M(\chi_{Q_{k,j_k}})(x)^{\frac{n+N+1}{mn}};
\end{align*}
thus
\begin{equation*}
G_2 \lesssim
\sum_{j_1}\cdots\sum_{j_m}
\prod_{k=1}^m
 {\lambda_{k,j_k}}
  M(\chi_{Q_{k,j_k}})^{\frac{n+N+1}{mn}}
  =
  \prod_{k=1}^m\Big[\sum_{j_k}
 {\lambda_{k,j_k}}
  M(\chi_{Q_{k,j_k}})^{\frac{n+N+1}{mn}}
  \Big].
\end{equation*}
By condition \eqref{eq.N1}, H\"older's inequality and the weighted
Fefferman-Stein vector-valued inequality (see Remark~\ref{FS-argument}), we get
\begin{equation}\label{eq:G2Lpw}
\|G_2\|_{L^p(\overline{w})} \lesssim
\prod_{k=1}^m
\Big\|
\sum_{j_k}{\lambda_{k,j_k}}
\chi_{Q_{k,j_k}}
\Big\|_{L^{p_k}(w_k)}.
\end{equation}

\medskip

We now estimate the norm of $G_1$. Since $\overline{w}\in A_\infty$ by
Lemma \ref{lemma:Ainfty}, we can choose $q> \max(1,p)$ such that
$\overline{w}\in A_q$. Then by Lemma \ref{lm:TL2w} we have that
\begin{equation*}
\Big(
\frac{1}{\overline{w}(R_{j_1,\ldots,j_m})}
\int\limits_{R_{j_1,\ldots,j_m}}
|T(a_{1,j_1},\cdots,a_{m,j_m})|^q(x)\overline{w}(x)dx
\Big)^{\frac1q}
\lesssim
\prod_{k=1}^m
\inf_{z\in R_{j_1,\ldots,j_m}}
M(\chi_{Q_{k,j_k}})(z)^{\frac{n+N+1}{mn}}.
\end{equation*}
If we combine this inequality, Lemma \ref{lm:AvgLpw}, H\"older's
inequality and the  Fefferman-Stein vector-valued inequality imply
that (again see Remark~\ref{FS-argument}), we get the following estimate:
%%%
\begin{align*}
& \|G_1\|_{L^p(\overline{w})} \\
&\qquad \lesssim
\Big\|\sum_{j_1\ldots,j_m}\prod_{k=1}^m{\lambda_{k,j_k}}
\Big(
\frac{1}{\overline{w}(R_{j_1,\ldots,j_m})}
\int\limits_{R_{j_1,\ldots,j_m}}
|T(a_{1,j_1},\cdots,a_{m,j_m})|^q(x)\overline{w}(x)dx
\Big)^{\frac1q}
\chi_{R_{j_1,\ldots,j_m}}
\Big\|_{L^p(\overline{w})}
\\
& \qquad \lesssim
\Big\|\sum_{j_1\ldots,j_m}
\Big(
\prod_{k=1}^m{\lambda_{k,j_k}}
\Big)\cdot
\Big(
\prod_{k=1}^m
\inf_{z\in R_{j_1,\ldots,j_m}}
M(\chi_{Q_{k,j_k}})(z)^{\frac{n+N+1}{mn}}
\Big)
\chi_{R_{j_1,\ldots,j_m}}
\Big\|_{L^p(\overline{w})}\\
&\qquad \lesssim
\Big\|
\prod_{k=1}^m
\Big(
\sum_{j_k}{\lambda_{k,j_k}}
M(\chi_{Q_{k,j_k}})^{\frac{n+N+1}{mn}}
\Big)
\Big\|_{L^p(\overline{w})} \\
&\qquad \lesssim
\prod_{k=1}^m
\Big\|
\sum_{j_k}{\lambda_{k,j_k}}
M(\chi_{Q_{k,j_k}})^{\frac{n+N+1}{mn}}
\Big\|_{L^{p_k}(w_k)}\\
&\qquad \lesssim
\prod_{k=1}^m
\Big\|
\sum_{j_k}{\lambda_{k,j_k}}
\chi_{Q_{k,j_k}}
\Big\|_{L^{p_k}(w_k)}.
\end{align*}
%%% 
If we combine the estimates for $G_1$ and $G_2$, we get the desired
inequality.

\section{Proof of Theorem \ref{thm:2}}
\label{section:proof2}

The proof of Theorem~\ref{thm:2} is very similar to the proof of
Theorem~\ref{thm:1}.  Instead of estimating the norm of $T$, we will
estimate the norm of $M_\phi\circ T$, where $M_\phi$ is 
 the non-tangential maximal operator
\begin{equation*}
M_\phi f(x) = \sup_{0<t<\infty}\sup_{|y-x|<t}|\phi_t*f(y)|,
\end{equation*}
where $\phi\in C_0^\infty$ and $\supp(\phi)\subset B(0,1)$.   We will use  the
that the Hardy space can be characterized by using the non-tangential
maximal function $M_\phi$ with the norm
\[
\|f\|_{H^p(w)}\approx \|M_\phi f\|_{L^p(w)}.
\]
See~\cite{ST89}; this equivalence is guaranteed by our choice of $N_0$
sufficiently large.  Throughout this section we  fix a choice of $\phi$.

In this section, we fix the smooth approximate identity $\phi$ supported in the unit ball. 
The following lemma was first proved in \cite{GNNS17A}; it is the
essential part in the proof of Theorem \ref{thm:2} and so we repeat
the proof here for the convenience of the reader.  Hereafter, given a
cube $Q$, let $Q^{**}=4n Q$.  

\begin{lemma}
\label{lm.mphi}
For $1\le k\le m$, let $a_k$ be $(N,\infty)$ atoms with
$\supp(a_k)\subset Q_k$.
 Suppose that $Q_1$ is such that $\ell(Q_1) =  \min\{\ell(Q_k) : 1\le k\le m\}$.
Then for all $x\notin Q_1^{**}$, we have
\begin{equation}
\label{eq.mphi}
M_\phi T(a_1,\ldots,a_m)(x)\lesssim
 \prod_{l=1}^m 
M(\chi_{Q_l})(x)^{\frac{n+N+1}{m n}}
+
M(\chi_{Q_1})(x)^{\frac{n+s_{\overline{w}}+1}{n}}
\prod_{l=1}^m 
\inf_{z \in Q_1}
M(\chi_{Q_l})(z)^{\frac{N-s_{\overline{w}}}{m n}},
\end{equation}
where $T$ is the operator in Theorem \ref{thm:1}.
\end{lemma}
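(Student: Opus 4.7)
The plan is to fix $x\notin Q_1^{**}$ and estimate $|\phi_t*T(a_1,\ldots,a_m)(y)|$ uniformly over all pairs $(y,t)$ with $|y-x|<t$. Writing $A := T(a_1,\ldots,a_m)$ and letting $c_1$ denote the center of $Q_1$, I split into two regimes according to whether the ball $B(y,t)$ does or does not intersect $Q_1^*$. Because $Q_1^{**}=4nQ_1$ and $Q_1^*=2\sqrt{n}\,Q_1$, the distance from $x$ to $Q_1^*$ is bounded below by a constant multiple of $|x-c_1|$, so there is a constant $c_0=c_0(n)\in(0,1)$ such that $B(y,t)\cap Q_1^*=\emptyset$ whenever $t\le c_0|x-c_1|$.

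In the first regime $t\le c_0|x-c_1|$ the support of $z\mapsto\phi_t(y-z)$ lies in $(Q_1^*)^c\subset(Q_1^*\cap\cdots\cap Q_m^*)^c$, so by \eqref{eq.4AA} of Lemma~\ref{lm.4A1} and $\|\phi_t\|_{L^1}\lesssim 1$,
$$|\phi_t*A(y)|\lesssim\sup_{z\in B(y,t)}\prod_{l=1}^m M(\chi_{Q_l})(z)^{(n+N+1)/(mn)}.$$
Since $B(y,t)\subset B(x,2t)$ and $2t\lesssim|x-c_1|$, the factors $M(\chi_{Q_l})(z)$ on this ball are comparable to $M(\chi_{Q_l})(x)$, producing the first term of \eqref{eq.mphi}.

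In the second regime $t>c_0|x-c_1|$, I invoke the cancellation hypothesis \eqref{eq.can}: $A$ has vanishing moments up to order $s_{\overline{w}}$, so for any polynomial $P$ in $z$ of degree at most $s_{\overline{w}}$,
$$\phi_t*A(y)=\int\bigl[\phi_t(y-z)-P(z)\bigr]A(z)\,dz.$$
Taking $P$ to be the degree-$s_{\overline{w}}$ Taylor polynomial in $z$ of $z\mapsto\phi_t(y-z)$ centered at $c_1$, the standard remainder estimate gives $|\phi_t(y-z)-P(z)|\lesssim t^{-n-s_{\overline{w}}-1}|z-c_1|^{s_{\overline{w}}+1}$ on the effective support of the integrand. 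I then split the integral as $\int_{Q_1^*}+\int_{(Q_1^*)^c}$. On $Q_1^*$ I use $|z-c_1|\lesssim\ell(Q_1)$ together with the $L^1$ bound on $A\chi_{Q_1^*}$ obtained by Cauchy--Schwarz from Lemma~\ref{lm.4A4} (applied with $w=1$, $q=2$). On $(Q_1^*)^c$ I substitute the pointwise bound \eqref{eq.4AA} for $|A(z)|$ and integrate in annular shells around $Q_1$; convergence of the resulting tail integral is guaranteed by $N>s_{\overline{w}}$ (cf.~\eqref{eq.N2}). Using $t\gtrsim|x-c_1|$, the prefactor $\ell(Q_1)^{n+s_{\overline{w}}+1}/t^{n+s_{\overline{w}}+1}$ becomes $M(\chi_{Q_1})(x)^{(n+s_{\overline{w}}+1)/n}$, and the $M(\chi_{Q_l})$ factors produced by the annular integration combine into $\prod_l\inf_{z\in Q_1}M(\chi_{Q_l})(z)^{(N-s_{\overline{w}})/(mn)}$; the stronger exponent $(n+N+1)/(mn)$ arising from the $Q_1^*$ piece is absorbed into the weaker prescribed exponent using $\inf_{z\in Q_1}M(\chi_{Q_l})(z)\le 1$, yielding the second term of \eqref{eq.mphi}.

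The main technical obstacle is the bookkeeping of exponents in the large-$t$ regime: the $s_{\overline{w}}+1$ orders of smoothness consumed by the Taylor expansion against the cancellation of $A$ yield the factor $(\ell(Q_1)/|x-c_1|)^{n+s_{\overline{w}}+1}$, while the remaining $N-s_{\overline{w}}$ orders of kernel regularity are exactly what is needed both for convergence of the tail integral on $(Q_1^*)^c$ and to produce the prescribed $(N-s_{\overline{w}})/(mn)$ exponents on the infima $\inf_{z\in Q_1}M(\chi_{Q_l})(z)$. Carrying this out requires a careful annular decomposition of $(Q_1^*)^c$ about $Q_1$ and the identification $M(\chi_{Q_1})(x)^{(n+s_{\overline{w}}+1)/n}\approx(\ell(Q_1)/|x-c_1|)^{n+s_{\overline{w}}+1}$, which is routine but must be done with attention to the different scales $\ell(Q_l)$ when distributing the kernel decay among the $m$ maximal functions.
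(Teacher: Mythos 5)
Your two-case split by the size of $t$ relative to $|x-c_1|$, and the Taylor expansion against the cancellation \eqref{eq.can} in the large-$t$ regime, do match the paper's strategy. However, in both regimes you substitute the pre-packaged pointwise bound \eqref{eq.4AA} too early, and this loses information that is essential for closing the estimate. The paper instead returns to the kernel representation \eqref{eq.Tay}--\eqref{eq.K1y} in both places precisely because \eqref{eq.4AA} is not sharp enough.

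In the small-$t$ regime, the claim that $M(\chi_{Q_l})(z)$ is comparable to $M(\chi_{Q_l})(x)$ for all $z\in B(x,2t)$ is only justified for $l=1$: from $2t\lesssim |x-c_1|$ and $z\notin Q_1^*$ you indeed get $|z-c_1|\approx|x-c_1|$. But for $l\ne 1$ there is no control on $|x-c_l|$ in terms of $t$: you can have $\ell(Q_l)\gg\ell(Q_1)$, $x\notin Q_l^{**}$, $M(\chi_{Q_l})(x)$ very small, yet $c_l\in B(x,2t)$, so $\sup_{z\in B(x,2t)}M(\chi_{Q_l})(z)=1\gg M(\chi_{Q_l})(x)$, and the ratio blows up as the configuration scales. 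So the chain $|\phi_t*A(y)|\lesssim\sup_{B(x,2t)}\prod_l M(\chi_{Q_l})(z)^{\frac{n+N+1}{mn}}\lesssim\prod_l M(\chi_{Q_l})(x)^{\frac{n+N+1}{mn}}$ fails at the second step. The paper avoids this by estimating $\sup_{z\in B(x,2t)}|A(z)|$ via \eqref{eq.Tay} and the inequality $|x-c_l|\lesssim|z-\xi_1|+|z-z_l|$ for $l$ in the set $\Lambda=\{l: x\notin Q_l^{**}\}$; this pushes $|x-c_l|$ directly into the kernel denominator and produces the correct $M(\chi_{Q_l})(x)$ factors without any comparability claim.

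In the large-$t$ regime, the estimate of $\int_{(Q_1^*)^c}|z-c_1|^{s_{\overline{w}}+1}|A(z)|\,dz$ by substituting \eqref{eq.4AA} and integrating in annuli around $Q_1$ does not produce the target $\ell(Q_1)^{n+s_{\overline{w}}+1}\prod_l\inf_{z\in Q_1}M(\chi_{Q_l})(z)^{\frac{N-s_{\overline{w}}}{mn}}$. The bound \eqref{eq.4AA} only carries $|z-c_l|$ in the denominators, while the target requires $|c_1-c_l|$; these are not interchangeable once $z$ wanders near $Q_l$. Concretely, with $m=2$ and $Q_2$ far from $Q_1$ but with $\ell(Q_2)$ comparable to $\ell(Q_1)$, the contribution from the shell $\{|z-c_2|\lesssim\ell(Q_2)\}$ to your integral is of order $|c_1-c_2|^{s_{\overline{w}}+1}\bigl(\ell(Q_1)/|c_1-c_2|\bigr)^{\frac{n+N+1}{2}}\ell(Q_2)^n$, and the ratio of this to the target grows like $|c_1-c_2|^{\frac{(m-1)(s_{\overline{w}}+1)-n}{m}}$, which is unbounded whenever $(m-1)(s_{\overline{w}}+1)>n$ (for example as soon as $s_{\overline{w}}\ge n$ when $m=2$). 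The paper's proof goes back to the $K^1$ representation \eqref{eq.K1y}, introduces the set $J=\{2\le l\le m: Q_1^{**}\cap Q_l^{**}=\emptyset\}$, and lower-bounds the kernel denominator by $\ell(Q_1)+|z-c_1|+\sum_{l\in J}|c_1-c_l|$ before integrating; this inserts the needed $|c_1-c_l|$ terms and is what makes the annular integration close. Your treatment of the $\int_{Q_1^*}$ piece via Lemma~\ref{lm.4A4} is fine, and your identification of the Taylor-remainder prefactor with $M(\chi_{Q_1})(x)^{\frac{n+s_{\overline{w}}+1}{n}}$ is correct.
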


\begin{proof}
Fix $x\in (Q_1^{**})^c$, $0<t<\infty$ and $y\in \mathbb{R}^n$ such
that $|y-x|<t$. 
To prove \eqref{eq.mphi} it will suffice to show that
\begin{equation}
\label{eq.phiTy}
|\phi_t*T(a_1,\ldots,a_m)(y)| \lesssim \prod_{l=1}^m 
M(\chi_{Q_l})(x)^{\frac{n+N+1}{m n}}
+
M(\chi_{Q_1})(x)^{\frac{n+s_{\overline{w}}+1}{n}}
\prod_{l=1}^m 
\inf_{z \in Q_1}
M(\chi_{Q_l})(z)^{\frac{N-s_{\overline{w}}}{m n}},
\end{equation}
where the implicit constant does not depend on $x,y$ and $t$. We will consider two cases.

\medskip

\subsection*{Case 1:} $t>\frac1{1000n^2}|x-c_1|$. We will exploit the
cancellation in \eqref{eq.can} to show that
\begin{equation}
\label{eq.phiTy1}
|\phi_t*T(a_1,\ldots,a_m)(y)| \lesssim 
M(\chi_{Q_1})(x)^{\frac{n+s_{\overline{w}}+1}{n}}
\prod_{l=1}^m 
\inf_{z \in Q_1}
M(\chi_{Q_l})(z)^{\frac{N-s_{\overline{w}}}{m n}}.
\end{equation}
By  \eqref{eq.can} we have
\begin{multline*}
\phi_t*T(a_1,\ldots,a_m)(y) 
= \int \phi_t(y-z)T(a_1,\ldots,a_m)(z)\, dz\\
= \int \Big(\phi_t(y-z) -
\sum_{|\alpha|\le s_{\overline{w}}}
\frac{\partial^{\alpha}[\phi_t](y-c_1)}{\alpha!}(c_1-z)^{\alpha}\Big)T(a_1,\ldots,a_m)(z)dz.
\end{multline*}
Note that by Taylor's theorem,
\[
\Big|
\phi_t(y-z) -
\sum_{|\alpha|\le s_{\overline{w}}}
\frac{\partial^{\alpha}[\phi_t](y-c_1)}{\alpha!}(c_1-z)^{\alpha}
\Big|
\lesssim \frac{|z-c_1|^{s_{\overline{w}}+1}}{t^{n+s_{\overline{w}}+1}}
\]
for all $y,z\in \mathbb{R}^n$ and all $t\in (0,\infty)$. Since
$t\gtrsim |x-c_1|$ and $x\notin Q_1^{**}$, we have
\begin{align*}
|\phi_t*T(a_1,\ldots,a_m)(y)|\lesssim & 
\int \frac{|z-c_1|^{s_{\overline{w}}+1}}{t^{n+s_{\overline{w}}+1}}|T(a_1,\ldots,a_m)(z)|dz\\
\lesssim&
\Big(\frac{\ell(Q_1)}{|x-c_1|}\Big)^{n+s_{\overline{w}}+1} \frac1{\ell(Q_1)^{n+s_{\overline{w}}+1}}
\int |z-c_1|^{s_{\overline{w}}+1}|T(a_1,\ldots,a_m)(z)|dz\\
\lesssim&
M(\chi_{Q_1})(x)^{\frac{n+s_{\overline{w}}+1}{n}}
\frac1{\ell(Q_1)^{n+s_{\overline{w}}+1}}
\int |z-c_1|^{s_{\overline{w}}+1}|T(a_1,\ldots,a_m)(z)|dz.
\end{align*}
Hence, to prove \eqref{eq.phiTy1} it remains to show that
\begin{equation}
\label{eq.L1T}
\frac1{\ell(Q_1)^{n+s_{\overline{w}}+1}}
\int |z-c_1|^{s_{\overline{w}}+1}|T(a_1,\ldots,a_m)(z)|dz
\lesssim
\prod_{l=1}^m 
\inf_{z \in Q_1}
M(\chi_{Q_l})(z)^{\frac{N-s_{\overline{w}}}{m n}}.
\end{equation}
If we split the integral on the left-hand side of \eqref{eq.L1T}
over $Q_1^*$ and $(Q_1^*)^c$, we can estimate as follows:
\begin{align*}
& \int |z-c_1|^{s_{\overline{w}}+1}|T(a_1,\ldots,a_m)(z)|dz \\
& \qquad \lesssim
\int_{Q_1^*} |z-c_1|^{s_{\overline{w}}+1}|T(a_1,\ldots,a_m)(z)|dz
+\int_{(Q_1^*)^c} |z-c_1|^{s_{\overline{w}}+1}|T(a_1,\ldots,a_m)(z)|dz\\
& \qquad \lesssim
\ell(Q_1)^{s_{\overline{w}}+1}
\int_{Q_1^*} |T(a_1,\ldots,a_m)(z)|dz
+\int_{(Q_1^*)^c} |z-c_1|^{s_{\overline{w}}+1}|T(a_1,\ldots,a_m)(z)|dz.
\end{align*}
By \eqref{eq.4A5}, we can estimate the first integral in the last
inequality by
\begin{equation}
\label{eq.TQ1}
\ell(Q_1)^{s_{\overline{w}}+1}
\int_{Q_1^*} |T(a_1,\ldots,a_m)(z)|dz
\lesssim \ell(Q_1)^{n+s_{\overline{w}}+1}
\prod_{l=1}^m
\inf_{z\in Q_1}
M(\chi_{Q_l^{}})(z)^\frac{n+N+1}{mn}.
\end{equation}
To estimate second integral, we need to exploit carefully the
smoothness of the kernel. Recall the representation of
$T(a_1,\ldots,a_m)(z)$ in \eqref{eq.Tay}.  Denote
$$J=\{2\le l\le m : Q_1^{**}\cap Q_l^{**}=\emptyset\}.$$
For $z\notin Q_1^*$, $\xi_1\in Q_1$, we have
$|z-\xi_1|\approx |z-c_1|\geq \ell(Q_1)$. Also for $l\in J$ and $z_l\in Q_l^*$,
$$|z-\xi_1|+|z-z_l|\geq |\xi_1-z_l|\gtrsim |c_1-c_l|.$$ 
We now estimate $K^1(z,z_1,\ldots,z_m)$ in \eqref{eq.K1y} to get 
\begin{equation*}
|T(a_1,\ldots,a_m)(z)|
\lesssim
\int_{({\mathbb R}^n)^m}
\frac{\ell(Q_1)^{N+1}\chi_{Q_1}(z_1)\,dz_1\cdots dz_m}{
\left(
\ell(Q_1)+|z-c_1|+
\displaystyle
\sum_{l\in J}
|c_1-c_l|
+\sum_{l=2}^m|z-z_l|
\right)^{m n+N+1}}
\end{equation*}
for all $z \in (Q_1^*)^c$.
Thus, 
\begin{align*}
&
\int_{(Q_1^*)^c}|y-c_1|^{s_{\overline{w}}+1}|T(a_1,\ldots,a_m)(y)|\,dy
\\
&\qquad \lesssim
\int_{{\mathbb R}^n \times ({\mathbb R}^n)^m}
\frac{|y-c_1|^{s_{\overline{w}}+1}\ell(Q_1)^{N+1}\chi_{Q_1}(y_1)\,d\vec{y}dy}
{\displaystyle\bigg(\ell(Q_1)+|y-c_1|
+\sum_{l\in J}|c_1-c_l|
+\sum_{l=2}^m |y-y_l|\bigg)^{m n+N+1}}
\\
&\qquad \lesssim 
\ell(Q_1)^{n+s_{\overline{w}}+1}
\prod_{l\in J}
\Big(
\frac{\ell(Q_l)}{\ell(Q_1)+|c_1-c_l|}
\Big)^\frac{N-s_{\overline{w}}}{m}.
\end{align*}
Note that
$1\lesssim\inf_{z\in Q_1}M(\chi_{Q_l})(z)$
if $Q_1^{**} \cap Q_l^{**} \ne \emptyset$ and for all $l\in J$,
\[
\frac{\ell(Q_l)}{\ell(Q_1)+|c_1-c_l|}
\lesssim \inf_{z\in Q_1}M(\chi_{Q_l})(z)^\frac{1}{n}.
\]
Therefore,
\begin{equation}
\label{eq.TQ2}
\int_{(Q_1^*)^c}|y-c_1|^{s_{\overline{w}}+1}|T(a_1,\ldots,a_m)(y)|\,dy
\lesssim
\ell(Q_1)^{n+s_{\overline{w}}+1}
\prod_{l=1}^m
\inf_{z \in Q_1} M(\chi_{Q_l})(z)^{\frac{N-s_{\overline{w}}}{mn}}.
\end{equation}
Now we combine \eqref{eq.TQ1} and \eqref{eq.TQ2} we get
\eqref{eq.L1T},  which completes the proof of Case 1.

\medskip

\subsection*{Case 2:} $t\le \frac1{1000n^2}|x-c_1|$. In this case, we
will show that
\begin{equation}
\label{eq.phiTy2}
|\phi_t*T(a_1,\ldots,a_m)(y)| \lesssim 
\prod_{l=1}^m 
M(\chi_{Q_l})(x)^{\frac{n+N+1}{m n}}.
\end{equation}
Since $\supp(\phi)\subset B(0,1)$ and $|y-x|<t$,
\begin{multline} \label{eq.supTBx}
|\phi_t*T(a_1,\ldots,a_m)(y)| 
\le
\int_{B(y,t)} t^{-n}\big|
\phi\big(t^{-1}(y-z)\big)
T(a_1,\ldots,a_m)(z)
\big|\, dz\\
\lesssim
\sup_{z\in B(y,t)}|T(a_1,\ldots,a_m)(z)|
\lesssim
\sup_{z\in B(x,2t)}|T(a_1,\ldots,a_m)(z)|.
\end{multline}

Let $\Lambda = \{1\le l\le m : x\notin Q_k^{**}\}$.
For $z\in B(x,2t)$, $\xi_1\in Q_1$, we have
\[
|x-c_1|\le |x-z| + |z-c_1|\le 2t + |\xi_1-c_1|+|z-\xi_1| \le
\frac1{500n^2}|x-c_1| +\frac12|x-c_1| + |z-\xi_1|;
\]
hence,
\[
t\lesssim |x-c_1|\lesssim |z-\xi_1|.
\]
For $l\in \Lambda$ and $z_l\in Q_l$, since $x\notin Q_l^{**}$,
\[
|x-c_l|\le 2|x-z_l|\le 2|x-z|+2|z-z_l|\le 4t+ 2|z-z_k|\lesssim |z-\xi_1|+ |z-z_k|.
\]
Recall the formula for $T(a_1,\ldots,a_m)(z)$ in \eqref{eq.Tay}; we 
estimate $K^1(z,z_1,\ldots,z_m)$ in \eqref{eq.K1y} to get
\begin{equation*}
|T(a_1,\ldots,a_m)(z)|
\lesssim
\int_{(\mathbb{R}^n)^m}
\frac{\ell(Q_1)^{N+1}\chi_{Q_1}(z_1)\,dz_1\cdots dz_m}
{\Big(\sum_{l=2}^m |z-z_l|+
\sum_{k \in \Lambda} |x-c_k|\Big)^{m n+N+1}}
\end{equation*}
for all $z\in B(x,2t)$.  From this we get that
\begin{equation}
\label{eq.MQl}
\sup_{z\in B(x,2t)}|T(a_1,\ldots,a_m)(z)|
\lesssim
\prod_{l\in \Lambda}
\frac{\ell(Q_l)^{\frac{n+N+1}{|\Lambda|}}\chi_{(Q_l^{**})^c}(x)}
{|x-c_l|^{\frac{n+N+1}{|\Lambda|}}}
\cdot
\prod_{k\notin \Lambda}\chi_{Q_k^{**}}(x)
\lesssim
\prod_{l=1}^m M(\chi_{Q_l})(x)^{\frac{n+N+1}{m n}}.
\end{equation}
Combining \eqref{eq.supTBx} and \eqref{eq.MQl} gives
\eqref{eq.phiTy2}. This completes Case 2 and so completes the proof.
\end{proof}

The next lemma is an immediate consequence  of Lemma \ref{lm.4A4} and
the fact that $M_\phi$ is bounded on $L^q(w)$ if $w\in A_q$ (since it
is controlled pointwise by the Hardy-Littlewood maximal operator;
cf.~\cite{garcia-cuerva-rubiodefrancia85}). 

\begin{lemma}
\label{lm.5A4}
Given $w\in A_q$, $1\le q<\infty$, for  $1\le k\le m$  let  $a_k$ be an $(N,\infty)$ atom
supported in $Q_k$.  Suppose $Q_1$ is the cube
such that $\ell(Q_1) = \min\{\ell(Q_k) : 1\le k\le m\}$.  Then
\begin{align}
\label{eq.5A5}
\|M_\phi T(a_1,\ldots,a_m)\chi_{Q_1^{**}}\|_{L^{q}(w)}&\lesssim
w(Q_1)^{\frac{1}{q}}
\prod_{l=1}^m
\inf_{z\in Q_1}
M(\chi_{Q_l^{}})(z)^\frac{n+N+1}{mn}.
\end{align}
\end{lemma}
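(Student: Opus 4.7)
My plan would be to mirror the dichotomy in the proof of Lemma~\ref{lm.4A4}, reducing Lemma~\ref{lm.5A4} to the $L^q(w)$- and $L^\infty$-boundedness of $M_\phi$, together with the global estimates on $T(a_1,\ldots,a_m)$ already produced in that proof. The key auxiliary fact is the pointwise domination $M_\phi f(x)\lesssim Mf(x)$: since $\supp(\phi)\subset B(0,1)$, for $|y-x|<t$ one has $|\phi_t*f(y)|\lesssim t^{-n}\int_{B(y,t)}|f| \le C(2t)^{-n}\int_{B(x,2t)}|f|\,dz \lesssim Mf(x)$. Since $w\in A_q$, $M$ is bounded on $L^q(w)$, hence so is $M_\phi$; the trivial estimate $\|M_\phi g\|_\infty\le \|\phi\|_{L^1}\|g\|_\infty$ also gives $L^\infty$-boundedness with a universal constant.

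In Case~1 of the proof of Lemma~\ref{lm.4A4}, namely when $Q_1^*\cap Q_k^*\ne \emptyset$ for every $k$, we have $\inf_{z\in Q_1}M(\chi_{Q_k})(z)\gtrsim 1$ for all $k$, and Lemma~\ref{lm:TL2w} yields the \emph{global} bound
\[
\|T(a_1,\ldots,a_m)\|_{L^q(w)}\lesssim \|a_1\|_{L^q(w)}\prod_{k\ge 2}\|a_k\|_\infty\le w(Q_1)^{1/q}.
\]
Dropping the truncation to $Q_1^{**}$ and applying the $L^q(w)$-boundedness of $M_\phi$ gives
\[
\|M_\phi T(a_1,\ldots,a_m)\chi_{Q_1^{**}}\|_{L^q(w)}\le \|M_\phi T(a_1,\ldots,a_m)\|_{L^q(w)}\lesssim w(Q_1)^{1/q},
\]
which is~\eqref{eq.5A5} since each of the factors $\inf_{z\in Q_1}M(\chi_{Q_k})(z)^{(n+N+1)/(mn)}$ can be inserted for free.

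In Case~2, when $Q_1^*\cap Q_k^*=\emptyset$ for some $k$, inequality~\eqref{eq.4A7} already furnishes the global pointwise bound
\[
\|T(a_1,\ldots,a_m)\|_\infty\lesssim \prod_{k=1}^{m}\inf_{z\in Q_1}M(\chi_{Q_k})(z)^{(n+N+1)/(mn)}.
\]
Combining the $L^\infty$ boundedness of $M_\phi$ with the doubling estimate $w(Q_1^{**})\lesssim w(Q_1)$ gives
\[
\|M_\phi T(a_1,\ldots,a_m)\chi_{Q_1^{**}}\|_{L^q(w)}
\le \|\phi\|_{L^1}\|T(a_1,\ldots,a_m)\|_\infty\, w(Q_1^{**})^{1/q}
\lesssim w(Q_1)^{1/q}\prod_{k=1}^{m}\inf_{z\in Q_1}M(\chi_{Q_k})(z)^{(n+N+1)/(mn)},
\]
which is again~\eqref{eq.5A5}. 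I do not foresee any real obstacle: as the authors note, the lemma is genuinely an \emph{immediate} consequence of Lemma~\ref{lm.4A4} once one records the two universal transfer bounds ($M_\phi\lesssim M$ pointwise and $M_\phi$ bounded on $L^\infty$ by $\|\phi\|_{L^1}$). The only mild care required is recognizing that the two cases inside the proof of Lemma~\ref{lm.4A4} actually produce \emph{global} $L^q(w)$ and $L^\infty$ bounds on $T(a_1,\ldots,a_m)$ respectively, which is precisely what allows us to pass through $M_\phi$ without reproving anything.
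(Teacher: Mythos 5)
Your proof is correct and takes essentially the same approach the paper intends: the authors simply assert the lemma is ``an immediate consequence of Lemma~\ref{lm.4A4} and the fact that $M_\phi$ is bounded on $L^q(w)$'' via the pointwise domination $M_\phi \lesssim M$. Your one refinement --- observing that the two cases inside the \emph{proof} of Lemma~\ref{lm.4A4} in fact yield a \emph{global} $L^q(w)$ bound (Case~1) or a \emph{global} $L^\infty$ bound (Case~2) on $T(a_1,\ldots,a_m)$, rather than merely the truncated bound stated in \eqref{eq.4A5} --- is precisely the detail needed for $M_\phi$, being non-local, to be slipped through, so you have filled in the gap the paper leaves implicit rather than taking a different route.
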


\medskip

\begin{proof}[Proof of Theorem \ref{thm:2}]
  Fix $w_k\in A_\infty$, $1\le k\le m$, and define
  $\overline{w} = \prod_{k=1}^mw_k^{\frac{p}{p_k}}$. Fix 
  $f_k\in H^{p_k}(w_k)\cap \mathcal{O}_N(\mathbb{R}^n)$,
  $1\le k\le m$. We will show that
\begin{equation}
\label{eq.MphiT1}
\|M_\phi T(f_1,\ldots,f_m)\|_{L^p(\overline{w})}
\lesssim
\|f_1\|_{H^{p_1}(w_1)}\cdots \|f_m\|_{H^{p_m}(w_m)}.
\end{equation}
Form the atomic decompositions of the functions $f_k$ as in the proof
of Theorem \ref{thm:1} to get \eqref{eq.fk.dec} and
\eqref{eq.Hpwk}. Then to prove \eqref{eq.MphiT1}, it is enough show
that
\begin{equation}
\label{eq.MphiT2}
\|M_\phi T(f_1,\ldots,f_m)\|_{L^p(\overline{w})}
\lesssim
\prod_{k=1}^m
\Big\|
\sum_{j_k}{\lambda_{k,j_k}}
\chi_{Q_{k,j_k}}
\Big\|_{L^{p_k}(w_k)}.
\end{equation}
Since  $M_\phi\circ T$ is multi-sublinear, we can write
\[
M_\phi T(f_1,\ldots,f_m)(x)\le G_1(x) +G_2(x),
\]
where 
$$G_1(x) = \sum_{j_1}\cdots\sum_{j_m}{\lambda_{1,j_1}}\ldots
  {\lambda_{m,j_m}} {M_\phi T(a_{1,j_1},\cdots,a_{m,j_m})}\chi_{R_{j_1,\ldots,j_m}}(x)$$
  and
  $$G_2(x) = \sum_{j_1}\cdots\sum_{j_m}{\lambda_{1,j_1}}\cdots
  {\lambda_{m,j_m}} {M_\phi
    T(a_{1,j_1},\ldots,a_{m,j_m})}\chi_{(R_{j_1,\ldots,j_m})^c}(x).$$
Here $R_{j_1,\ldots,j_m}$ is the smallest cube among $Q_{1,j_1}^{**},\ldots,Q_{m,j_m}^{**}$.

A similar argument as in the proof of Theorem \ref{thm:1} with Lemma \ref{lm.5A4} in place of Lemma \ref{lm.4A4} gives
\begin{equation}
\label{eq:G1Mphi}
\|G_1\|_{L^p(\overline{w})} 
\lesssim
\prod_{k=1}^m
\Big\|
\sum_{j_k}{\lambda_{k,j_k}}
\chi_{Q_{k,j_k}}
\Big\|_{L^{p_k}(w_k)}.
\end{equation}

We now estimate the norm of $G_2$.  By Lemma \ref{lm.mphi} we get that
\[
G_2(x)\lesssim G_{21}(x)+ G_{22}(x),
\]
where
\[
G_{21}(x) =  \sum_{j_1}\cdots\sum_{j_m}{\lambda_{1,j_1}}\cdots
  {\lambda_{m,j_m}} \prod_{k=1}^mM(\chi_{Q_{k,j_k}})(x)^{\frac{n+N+1}{m n}}
\]
and
\[
G_{22}(x) =  \sum_{j_1}\cdots\sum_{j_m}{\lambda_{1,j_1}}\cdots
  {\lambda_{m,j_m}} M(\chi_{R_{j_1,\ldots,j_m}})(x)^{\frac{n+s_{\overline{w}}+1}{n}}
  \prod_{l=1}^m 
\inf_{z \in R_{j_1,\ldots,j_m}}
M(\chi_{Q_{l,j_l}})(z)^{\frac{N-s_{\overline{w}}}{m n}}.
\]
The function $G_{21}$ can be estimated by essentially the same
argument used for $G_1$ to get
\begin{equation}
\label{eq:G21Mphi}
\|G_{21}\|_{L^p(\overline{w})} 
\lesssim
\prod_{k=1}^m
\Big\|
\sum_{j_k}{\lambda_{k,j_k}}
\chi_{Q_{k,j_k}}
\Big\|_{L^{p_k}(w_k)}.
\end{equation}

To estimate  $G_{22}$, since $\frac{(n+s_{\overline{w}}+1)p}{n}>1$,
we use \eqref{eq.N2} and the Fefferman-Stein vector-valued inequality
(cf. Remark~\ref{FS-argument}) to get
\begin{align}
\|G_{22}\|_{L^p(\overline{w})} 
\lesssim&
\Big\|
\sum_{j_1}\cdots\sum_{j_m}{\lambda_{1,j_1}}\cdots
  {\lambda_{m,j_m}} \chi_{R_{j_1,\ldots,j_m}} \prod_{l=1}^m 
\inf_{z \in R_{j_1,\ldots,j_m}}
M(\chi_{Q_{l,j_l}})(z)^{\frac{N-s_{\overline{w}}}{m n}}
\Big\|_{L^p(\overline{w})} 
\notag
\\
\le&
\Big\|
\sum_{j_1}\cdots\sum_{j_m}{\lambda_{1,j_1}}\cdots
  {\lambda_{m,j_m}}  \prod_{k=1}^m 
M(\chi_{Q_{k,j_k}})^{\frac{N-s_{\overline{w}}}{m n}}
\Big\|_{L^p(\overline{w})}
\notag
\\
\le&
\prod_{k=1}^m
\Big\|
\sum_{j_k}{\lambda_{k,j_k}}
M(\chi_{Q_{k,j_k}})^{\frac{N-s_{\overline{w}}}{mn}}
\Big\|_{L^{p_k}(w_k)}
\label{eq:G22Mphi}
\lesssim
\prod_{k=1}^m
\Big\|
\sum_{j_k}{\lambda_{k,j_k}}
\chi_{Q_{k,j_k}}
\Big\|_{L^{p_k}(w_k)}.
\end{align}

If  we combine \eqref{eq:G1Mphi}, \eqref{eq:G21Mphi} and
\eqref{eq:G22Mphi}, we get \eqref{eq.MphiT2} and this completes the proof.
\end{proof}

\section{Variable Hardy spaces:  proof of Theorems~\ref{thm:3} and~\ref{thm:4}}
\label{section:variable}

In this section we prove Theorems~\ref{thm:3}
and~\ref{thm:4}.  In fact, we will prove two more general results that
include these theorems as special cases.  To do so, we first recall
some basic facts about the variable Lebsesgue spaces.  For complete
information we refer the reader to~\cite{CF13}.

Let $\Pp_0(\R^n)$ be the set of all measurable functions $\pp : \R^n
\rightarrow (0,\infty)$.  Define
\[ p_- = \essinf_{x\in \mathbb{R}^n} p(x), 
\qquad p_+ = \esssup_{x\in \mathbb{R}^n}
p(x). \]
Given $\pp\in \Pp_0(\R^n)$ define $\Lp=\Lp(\R^n)$ to be the set of all
measurable functions $f$ such that for some $\lambda>0$,
\[ \rho(f/\lambda) = \int_{\mathbb{R}^n}
\left(\frac{|f(x)|}{\lambda}\right)^{p(x)}\,dx< \infty.  \]
This becomes a quasi-Banach space with the ``norm''
$$ \|f\|_{L^{p(\cdot)}} = \inf\left\{ \lambda > 0 : \rho(f/\lambda)\leq 1
\right\}.
$$
If $p_-\geq 1$, $\Lp$ is a Banach space; if $\pp=p$ a constant, then
$\Lp=L^p$ with equality of norms.

If the maximal operator is bounded on $\Lp$ we write that $\pp \in
\B$.  A necessary condition for this to be the case is that $p_->1$.
A sufficient condition is that $1<p_-\leq p_+<\infty$ and $\pp$ is
log-H\"older continuous: i.e.,~\eqref{eqn:log-holder1}
and~\eqref{eqn:log-holder2} hold.  However, this continuity condition
is not necessary:  see~\cite{CF13} for a detailed discussion of this
problem. 

Given $\pp\in \Pp_0(\R^n)$, the variable Hardy space $H^\pp$ is
defined to be the set of all distributions $f$ such that $M_{N_0}f\in
\Lp$.  Again, we here assume $N_0>0$ is a sufficiently large constant
so that all the standard definitions of the classical Hardy spaces are
equivalent.  These spaces were examined in detail in~\cite{CW14} (see
also~\cite{NS12}). 

A very important tool for proving norm inequalities in spaces of
variable exponents is the extension of the Rubio de Francia theory of
extrapolation to the scale of variable Lebesgue spaces.  For the
history and application of this approach for linear operators,
see~\cite{CF13,CMP11}.   To prove Theorems~\ref{thm:3} and~\ref{thm:4}
we will use a multilinear version due to the first author and
Naibo~\cite{CN16}.  They only stated their proof for the bilinear
case, but the same proof immediately extends to the general multilinear setting.

\begin{theorem}\label{thm:xtpl}
  Let $\F = \big\{ (f_1,\ldots,f_m,F)\big\}$ be a family of
  $(m+1)$-tuples of non-negative, measurable functions on $\R^n$.
  Suppose that there exist indices $0<p_1,\ldots,p_m,p<\infty$
  satisfying $\frac{1}{p} = \frac{1}{p_1}+\cdots+\frac{1}{p_m}$ such
  that for all weights $w_k\in A_1$, $1\leq k\leq m$, and
  $\overline{w} = \prod_{k=1}^{m}w_k^{\frac{p}{p_k}}$, 
\begin{equation} \label{eqn:xtpl1}
\|F\|_{L^{p}(\overline{w})} \lesssim \|f_1\|_{L^{p_1}(w_1)}\cdots \|f_m\|_{L^{p_m}(w_m)}
\end{equation}
for all $(f_1,\ldots,f_m,F)$ such that $F\in L^{p}(\overline{w})$,
and where the implicit constant depends only on $n$, $p_k$ and
$[w_k]_{A_1}$, $1\leq k \leq m$.   Let
$q_1(\cdot),\ldots,q_m(\cdot),q(\cdot)\in \Pp_0$ be  such that 
\[
\frac{1}{q(\cdot)} = \frac{1}{q_1(\cdot)}+\cdots+\frac{1}{q_m(\cdot)},
\]
$p_k< (q_k)_-$,  $1\le k\le m$, and $q_k(\cdot)/p_k\in \B$.  Then
\begin{equation} \label{eqn:xtpl2}
\|F\|_{L^{q(\cdot)}} \lesssim \|f_1\|_{L^{q_1(\cdot)}}\cdots \|f_m\|_{L^{q_m(\cdot)}}
\end{equation}
provided $\|F\|_{L^{q(\cdot)}}<\infty$.  The implicit constant only depends
on $n$ and $q_k(\cdot)$, $1\leq k \leq m$. 
\end{theorem}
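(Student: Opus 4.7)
The plan is to adapt the bilinear Rubio de Francia extrapolation scheme of Cruz-Uribe--Naibo~\cite{CN16} to $m \geq 2$ factors; the structural argument is unchanged, only the bookkeeping grows. First I would reduce to a homogeneous form by raising to the $p$-th power: setting $G = F^p$, $g_k = f_k^{p_k}$, $\alpha_k = p/p_k$, $r(\cdot) = q(\cdot)/p$, and $r_k(\cdot) = q_k(\cdot)/p_k$, one has $\sum_k \alpha_k = 1$, $(r_k)_- > 1$, $r_k(\cdot) \in \B$, and $1/r(\cdot) = \sum_k \alpha_k/r_k(\cdot)$, so in particular $r_- > 1$ and $L^{r(\cdot)}$ is a Banach space. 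Hypothesis \eqref{eqn:xtpl1} then becomes
\[
\int_{\R^n} G \prod_k w_k^{\alpha_k}\, dx \lesssim \prod_k \Big(\int_{\R^n} g_k w_k\, dx\Big)^{\alpha_k}, \qquad w_k \in A_1,
\]
and the target \eqref{eqn:xtpl2} becomes $\|G\|_{L^{r(\cdot)}} \lesssim \prod_k \|g_k\|_{L^{r_k(\cdot)}}^{\alpha_k}$.

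Second, I would dualize in the variable-exponent scale: since $r_- > 1$, the associate-norm characterization of $L^{r(\cdot)}$ reduces the target to showing
\[
\int_{\R^n} G\, h\, dx \lesssim \prod_k \|g_k\|_{L^{r_k(\cdot)}}^{\alpha_k}
\]
for every non-negative $h$ with $\|h\|_{L^{r'(\cdot)}} \leq 1$. Fix such an $h$. The crux of the argument is to build non-negative weights $w_k = w_k(h)$ satisfying (a) $h \leq \prod_k w_k^{\alpha_k}$ pointwise, (b) $[w_k]_{A_1} \lesssim 1$, and (c) $\|w_k\|_{L^{r_k'(\cdot)}} \lesssim 1$, all with constants independent of $h$. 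Granting such $w_k$, the $A_1$ hypothesis followed by variable-exponent H\"older gives
\[
\int G\, h\, dx \leq \int G \prod_k w_k^{\alpha_k}\, dx \lesssim \prod_k \Big(\int g_k w_k\, dx\Big)^{\alpha_k} \leq \prod_k \bigl(\|g_k\|_{L^{r_k(\cdot)}} \|w_k\|_{L^{r_k'(\cdot)}}\bigr)^{\alpha_k} \lesssim \prod_k \|g_k\|_{L^{r_k(\cdot)}}^{\alpha_k},
\]
and taking the supremum over $h$ finishes the proof.

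The main obstacle is producing the weights satisfying (a)--(c): $h$ lives in $L^{r'(\cdot)}$ but each $w_k$ must be controlled in the distinct space $L^{r_k'(\cdot)}$. The standard tool is the Rubio de Francia iteration $\mathcal{R}_k u = \sum_{j \geq 0} 2^{-j}\|M\|_{L^{r_k'(\cdot)} \to L^{r_k'(\cdot)}}^{-j} M^j u$, which is bounded on $L^{r_k'(\cdot)}$ (using that $r_k \in \B$ together with $1 < (r_k)_- \leq (r_k)_+ < \infty$ implies $r_k'(\cdot) \in \B$, by Diening's variable-exponent duality theorem) and produces $A_1$ weights dominating its input. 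The dual relation $1/r'(\cdot) = \sum_k \alpha_k/r_k'(\cdot)$ dictates the choice of exponents $\sigma_k \geq 1$ on which to iterate: one sets $w_k = (\mathcal{R}_k h^{\sigma_k})^{1/\sigma_k}$, and the stability of $A_1$ under powers in $(0,1]$ together with the self-improvement of $\B$ under small perturbations of the exponent allow one to balance the $\sigma_k$ so that (a)--(c) hold simultaneously. This bookkeeping is exactly what is carried out in \cite{CN16} for the bilinear case, and the extension to general $m$ is transparent.
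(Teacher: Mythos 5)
Your skeleton is exactly the CN16 approach that the paper delegates to: rescale to the homogeneous form $G=F^p$, $g_k=f_k^{p_k}$ (with the arithmetic $\sum_k\alpha_k=1$, $1/r(\cdot)=\sum_k\alpha_k/r_k(\cdot)$, $r_->1$ all correct); dualize $L^{r(\cdot)}$ against $L^{r'(\cdot)}$; reduce to producing $A_1$ weights $w_k$ satisfying (a) $h\le\prod_k w_k^{\alpha_k}$, (b) $[w_k]_{A_1}\lesssim 1$, (c) $\|w_k\|_{L^{r_k'(\cdot)}}\lesssim 1$; and run the displayed chain of inequalities. All of that is sound (including the observation that Diening duality gives $r_k'(\cdot)\in\B$, so the RdF iteration $\mathcal R_k$ is bounded on $L^{r_k'(\cdot)}$), and it is the proof the paper has in mind.

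The gap is in the construction of the $w_k$. With constant exponents $\sigma_k$, the formula $w_k=(\mathcal R_k h^{\sigma_k})^{1/\sigma_k}$ cannot satisfy (c): whichever space you run $\mathcal R_k$ on, the output bound is in terms of $\|h\|_{L^{r_k'(\cdot)}}$ (up to a constant power), while the only thing you control is $\|h\|_{L^{r'(\cdot)}}\le 1$, and on $\R^n$ the spaces $L^{r'(\cdot)}$ and $L^{r_k'(\cdot)}$ are incomparable. No fixed power of $h$, and no ``small perturbation of $\B$,'' fixes this; the mismatch is structural, not a matter of openness. The construction that actually works uses a \emph{variable} power dictated by the dual identity. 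Set
\[
h_k(x)=h(x)^{\,r'(x)/r_k'(x)},\qquad w_k=\mathcal R_k h_k,
\]
where $\mathcal R_k$ is the RdF iteration on $L^{r_k'(\cdot)}$. Then the modular identity $\int h_k^{\,r_k'(x)}\,dx=\int h^{\,r'(x)}\,dx\le 1$ gives $\|h_k\|_{L^{r_k'(\cdot)}}\le 1$, hence (c) by boundedness of $\mathcal R_k$; the pointwise identity $\sum_k\alpha_k\,r'(x)/r_k'(x)=1$ (which is exactly $1/r'(\cdot)=\sum_k\alpha_k/r_k'(\cdot)$) gives $\prod_k h_k^{\alpha_k}=h$ and hence (a) after $\mathcal R_k\ge\mathrm{id}$; and (b) is the standard $A_1$ property of $\mathcal R_k$. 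There is no need to raise $A_1$ weights to powers or to perturb the exponent. With this correction the rest of your argument goes through unchanged.
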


\begin{remark}
In~\cite{CN16}, the hypothesis on the exponents $q_k(\cdot)$ was
stated as $(q_k(\cdot)/p_k)' \in \B$, where this exponent is the
conjugate exponent, defined pointwise by
$\frac{1}{p(x)}+\frac{1}{p'(x)}=1$.  It was stated in this way for
technical reasons related to the proof.  However, these two hypotheses
are equivalent:  see~\cite[Corollary~4.64]{CF13}.
\end{remark}

The one technical obstacle in applying Theorem~\ref{thm:xtpl} is
constructing the family $\F$ to satisfy the hypotheses that the
left-hand sides of \eqref{eqn:xtpl1} and \eqref{eqn:xtpl2} are
finite and that the resulting family is large enough that the desired
result can be proved via a density argument.  In our case we will use
the atomic decomposition in the weighted and variable Hardy spaces.
As we noted in Section~\ref{section:weights}, given $w\in A_\infty$
and $0<p<\infty$, every $f\in H^p(w)$ can be written as the sum
\begin{equation} \label{eqn:atm-dcp}
 f = \sum_k \lambda_k a_k, 
\end{equation}
where $\lambda_k\geq 0$ and the $a_k$ are $(N,\infty)$ atoms, provided
$N\geq s_w$.  Moreover, this series converges both in the sense of
distributions and in $H^p(w)$. 
(See~\cite[Chapter VIII]{ST89}.)  The same is true in the variable
Hardy spaces.   More precisely:  suppose $\pp \in \Pp_0$ is such that there
exists $0<p_0<p_-$ with $\pp/p_0 \in \B$.  Then given $N>
n(p_0^{-1}-1)$, if $f\in H^\pp$, there exists a sequence of
$(N,\infty)$ atoms $a_k$ and constants $\lambda_k$ such that
\eqref{eqn:atm-dcp} holds, and the series converges both in the sense
of distributions and in $H^\pp$.  (See~\cite[Theorem~6.3]{CW14}; here
we have slightly modified the definition of atoms, but the change is
immediate.)  It follows immediately from these two results that finite
sums of $(N,\infty)$ atoms, for $N$ sufficiently large, are dense in
$H^p(w)$ and $H^\pp$.  

\begin{remark}
In applying the density of finite sums of atoms, we are not making use
of the finite atomic decomposition norm (as in Theorem~\ref{lm.fin.dec} for weighted
spaces or in the corresponding result for variable Hardy spaces
in~\cite{CW14}).  We will only use that these sums are dense with
respect to the given Hardy space norm.
\end{remark}

\begin{theorem} \label{thm:Hp-Lp}
Let $q_1(\cdot),\ldots,q_m(\cdot),\qq \in \Pp_0$ be such that
$\frac{1}{\qq} = \frac{1}{q_1(\cdot)}+\cdots+\frac{1}{q_m(\cdot)}$ and
$0<(q_k)_- \leq (q_k)_+< \infty$, $1\leq k \leq m$.  Suppose further
that there exist $0<p_1,\ldots,p_m,p<\infty$ with
$\frac{1}{p}=\frac{1}{p_1}+\cdots+\frac{1}{p_m}$, $0<p_k<(q_k)_-$, and
$q_k(\cdot)/p_k \in \B$.  If $T$ is an $m$-CZO as in
Theorem~\ref{thm:1} satisfying~\eqref{eqn:kernelregularity} for all
$|\alpha|\leq N$, where
\[ N\ge \max\bigg\{\floor{mn\Big(\frac{1}{p_k}-1\Big)}_+,1\le
  k\le m\bigg\}+(m-1)n, \]
then
\[ T : H^{q_1(\cdot)}\times \cdots \times H^{q_1(\cdot)} \rightarrow
  L^\qq. \]
\end{theorem}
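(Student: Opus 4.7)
The strategy is to deduce Theorem \ref{thm:Hp-Lp} from Theorem \ref{thm:1} by applying the multilinear extrapolation theorem (Theorem \ref{thm:xtpl}). Two pieces have to be arranged: a dense class of ``test'' inputs on which $T(f_1,\ldots,f_m)$ is unambiguously defined and lies in $L^{q(\cdot)}$, and a family $\F$ of $(m+1)$-tuples on which the weighted bound of Theorem \ref{thm:1} can be recast in the form required by Theorem \ref{thm:xtpl}.

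As a dense class I take $\mathcal D$, the set of finite linear combinations of $(N,\infty)$-atoms; this is dense in each $H^{q_k(\cdot)}$ by the atomic decomposition in \cite{CW14}. Given any $w_1,\ldots,w_m\in A_1\subset A_\infty$ we have $r_{w_k}=1$, so the hypothesis on $N$ in Theorem \ref{thm:Hp-Lp} matches \eqref{eq.N1}; Theorem \ref{thm:1} then yields
\[
\|T(f_1,\ldots,f_m)\|_{L^p(\overline{w})}\lesssim \prod_{k=1}^m\|f_k\|_{H^{p_k}(w_k)}=\prod_{k=1}^m\|M_{N_0}f_k\|_{L^{p_k}(w_k)},
\qquad \overline{w}=\prod_{k=1}^m w_k^{p/p_k},
\]
for all $f_1,\ldots,f_m\in\mathcal D$. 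Accordingly, set
\[
\F=\bigl\{\,(M_{N_0}f_1,\ldots,M_{N_0}f_m,\,|T(f_1,\ldots,f_m)|)\ :\ f_1,\ldots,f_m\in\mathcal D\,\bigr\};
\]
the displayed inequality is then exactly hypothesis \eqref{eqn:xtpl1} of Theorem \ref{thm:xtpl} for $\F$.

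Before invoking extrapolation one must verify the standing finiteness hypothesis in Theorem \ref{thm:xtpl}, namely $\|T(f_1,\ldots,f_m)\|_{L^{q(\cdot)}}<\infty$ for each $(f_1,\ldots,f_m)\in\mathcal D^m$. Since every $f_k$ is a finite sum of bounded, compactly supported atoms, the classical unweighted $L^r$ bounds for multilinear CZOs control $T(f_1,\ldots,f_m)$ on any large ball, while applying Lemma \ref{lm.4A1} atom-by-atom with $\Lambda=\{1,\ldots,m\}$ gives a pointwise decay estimate of the form $|T(f_1,\ldots,f_m)(x)|\lesssim (1+|x|)^{-(n+N+1)}$ for $|x|$ large; together with $q_+<\infty$, these two facts force $T(f_1,\ldots,f_m)\in L^{q(\cdot)}$.

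With these preparations, Theorem \ref{thm:xtpl} (whose hypotheses $p_k<(q_k)_-$ and $q_k(\cdot)/p_k\in\B$ are precisely those assumed in Theorem \ref{thm:Hp-Lp}) produces
\[
\|T(f_1,\ldots,f_m)\|_{L^{q(\cdot)}}\lesssim \prod_{k=1}^m\|M_{N_0}f_k\|_{L^{q_k(\cdot)}}=\prod_{k=1}^m\|f_k\|_{H^{q_k(\cdot)}}
\]
for all $f_1,\ldots,f_m\in\mathcal D$, and Theorem \ref{thm:Hp-Lp} follows by extending $T$ continuously from $\mathcal D^m$ to all of $H^{q_1(\cdot)}\times\cdots\times H^{q_m(\cdot)}$. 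I expect the main technical obstacle to be the a priori finiteness check $\|T(f_1,\ldots,f_m)\|_{L^{q(\cdot)}}<\infty$ on $\mathcal D^m$, since it is the one place where the pointwise decay from Lemma \ref{lm.4A1} must be combined with the variable exponent norm; everything after that is a mechanical application of the extrapolation machinery.
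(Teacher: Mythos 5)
Your overall architecture matches the paper's: apply Theorem~\ref{thm:1} on $A_1$ weights, feed the resulting weighted inequality into the multilinear extrapolation Theorem~\ref{thm:xtpl}, and finish with density of finite atomic sums. You also correctly make explicit that the tuples passed to the extrapolation theorem should involve $M_{N_0}f_k$, which the paper only does implicitly by identifying $\|f_k\|_{H^{p_k}(w_k)}$ with $\|M_{N_0}f_k\|_{L^{p_k}(w_k)}$. Where you genuinely diverge is in how you handle the a~priori finiteness hypothesis $\|F\|_{\qq}<\infty$ that Theorem~\ref{thm:xtpl} requires. The paper sidesteps this entirely by truncating: it sets $F=\max\{|T(f_1,\ldots,f_m)|,R\}\chi_{B(0,R)}$ (in context this should be $\min$, a typo in the source), so that $\|F\|_\qq\le R\|\chi_{B(0,R)}\|_\qq<\infty$ trivially, runs extrapolation, and then recovers the estimate for $|T(f_1,\ldots,f_m)|$ by Fatou's lemma for variable exponent spaces as $R\to\infty$. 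You instead attempt to verify directly that $T(f_1,\ldots,f_m)\in L^\qq$ using Lemma~\ref{lm.4A1}. That route can be made to work, but your stated justification is incomplete in a way worth flagging: the local part does follow from $q_+<\infty$ together with the classical $L^r$ bound for $r\ge q_+$, but the tail bound $|T(f_1,\ldots,f_m)(x)|\lesssim(1+|x|)^{-(n+N+1)}$ implies membership in $L^\qq$ only after one checks that $q_-(n+N+1)>n$. This does hold under the hypotheses --- since $p<q_-$ (because $p_k<(q_k)_-$ gives $1/q_-\le\sum 1/(q_k)_-<\sum 1/p_k=1/p$) and the lower bound on $N$ forces $N+n+1>n/p$ --- but it is a nontrivial arithmetic step, and it is the condition on $q_-$, not $q_+$, that controls the tail. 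The paper's truncation-plus-Fatou trick avoids these computations entirely, which is why it is the cleaner route; your direct verification is a valid alternative once the $q_-$ computation is supplied.
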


\begin{remark}
Theorem~\ref{thm:3} follows at once since if $q_k(\cdot)$ is
log-H\"older continuous, then $q_k(\cdot)/p_k \in \B$.
\end{remark}

\begin{proof}
Fix an integer $K_0$ such that
\[ K_0 > \max \bigg\{ \floor{n\Big(\frac{1}{p_k}-1\Big)}_+, 1 \leq k
  \leq m \bigg\}. \]
Define the family $\F = \big\{ (f_1,\ldots,f_m,F) \big\}$, where for
each $1\leq k\leq m$, 
\[ f_k = \sum_{j=1}^L \lambda_j a_j \]
is a finite linear combination of $(K_0,\infty)$ atoms, and 
\[ F = \max\big\{ |T(f_1,\ldots,f_m)|, R\big\} \chi_{B(0,R)}, \]
where $0<R<\infty$.  

Now fix any collection of weights $w_1,\ldots,w_m \in A_1$.  Then for
$1\leq k\leq m$, $r_{w_k}=1$, so $K_0>s_{w_k}$.  Therefore, given any
$(m+1)$-tuple $(f_1,\ldots,f_m,F)\in \F$, $f_k \in H^{p_k}(w_k)$, and
by Theorem~\ref{thm:1},
\[ \|F\|_{L^p(\overline{w})} \leq
\|T(f_1,\ldots,f_m)\|_{L^p(\overline{w})} 
\lesssim \|f_1\|_{H^{p_1}(w_1)}\cdots \|f_m\|_{H^{p_m}(w_m)} <
\infty. \]

Moreover, we have that $f_k \in H^{q_k(\cdot)}$ and 
\[ \|F\|_\qq \leq R\|\chi_{B(0,R)}\|_\qq < \infty. \]

Hence, by Theorem~\ref{thm:xtpl} we have that 
\[ \|F\|_{\qq} \lesssim \|f_1\|_{H^{q_1(\cdot)}}\cdots
  \|f_m\|_{H^{q_m(\cdot)}} < \infty.  \]
By Fatou's lemma in the scale of variable Lebesgue
spaces~\cite[Theorem~2.61]{CF13}, we get
\[ \|T(f_1,\ldots,f_m)\|_{\qq} \lesssim \|f_1\|_{H^{q_1(\cdot)}}\cdots
  \|f_m\|_{H^{q_m(\cdot)}} < \infty.  \]
Since finite sums of $(K_0,\infty)$ atoms are dense in
$H^{q_k(\cdot)}$, $1\leq k \leq m$, a standard density argument shows
that this inequality holds for all $f_k \in H^{q_k(\cdot)}$, $1\leq k
\leq m$.  This completes the proof.
\end{proof}

\medskip

The proof of the following result is identical to the proof of
Theorem~\ref{thm:Hp-Lp}, except that in the definition of the family
$\F$ we replace $T$ by $M_{N_0}T$ (for $N_0$ sufficiently large) and
use Theorem~\ref{thm:2} instead of Theorem~\ref{thm:1}.
Theorem~\ref{thm:4} again follows as an immediate corollary.

\begin{theorem} \label{thm:Hp-Hp}
Given $q_1(\cdot),\ldots,q_m(\cdot),\qq$ and $p_1,\ldots,p_m,p$ as in
Theorem~\ref{thm:Hp-Lp}, let $T$ be an $m$-CZO as in
Theorem~\ref{thm:1} satisfying~\eqref{eqn:kernelregularity} for all
$|\alpha|\leq N$, where
\[ N\ge \floor{mn\Big(\frac{1}{p}-1\Big)}_+ + \max\bigg\{\floor{mn\Big(\frac{1}{p_k}-1\Big)}_+,1\le
  k\le m\bigg\}+mn. \]
Suppose further that $T$ satisfies~\eqref{eq.can} for all $|\alpha|
\leq \floor{n(1/p-1)}_+$.  Then
\[ T : H^{q_1(\cdot)}\times \cdots \times H^{q_1(\cdot)} \rightarrow
  H^\qq. \]
\end{theorem}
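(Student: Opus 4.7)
The plan is to mirror the proof of Theorem~\ref{thm:Hp-Lp}, replacing $T$ by the composition $M_{N_0}\circ T$ so that the target quasi-norm on $F$ captures the Hardy space $H^\qq$ rather than merely $\Lq$. Concretely, I would fix an integer $K_0$ large enough so that $(K_0,\infty)$ atoms serve as atoms for every $H^{p_k}(w_k)$ with $w_k\in A_1$ (for which $r_{w_k}=1$ forces $s_{w_k}=\floor{n(1/p_k-1)}_+$) and, via the variable Hardy atomic decomposition of~\cite{CW14}, for every $H^{q_k(\cdot)}$ (using $p_k<(q_k)_-$ together with $q_k(\cdot)/p_k\in\B$). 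Then I would introduce
\[
\F=\big\{(f_1,\ldots,f_m,F)\big\},
\]
where each $f_k$ is a finite linear combination of $(K_0,\infty)$ atoms and
\[
F=\min\big\{M_{N_0}T(f_1,\ldots,f_m),\,R\big\}\chi_{B(0,R)},\qquad 0<R<\infty.
\]
By construction $F$ is bounded and compactly supported, hence automatically lies in $\Lq$ and in every $L^p(\overline{w})$.

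Next I would verify the weighted hypothesis of Theorem~\ref{thm:xtpl} for weights $w_k\in A_1$. An application of H\"older's inequality with conjugate indices $\{p_k/p\}$ combined with the defining pointwise bound $\avgint_Q w_k\,dy\leq[w_k]_{A_1}w_k(x)$ shows that $\overline{w}\in A_1$, so $r_{\overline{w}}=1$ and
\[
s_{\overline{w}}=\floor{n\Big(\tfrac{1}{p}-1\Big)}_+.
\]
Therefore the cancellation hypothesis imposed in the statement of Theorem~\ref{thm:Hp-Hp} is exactly what Theorem~\ref{thm:2} demands, and the kernel regularity bound on $N$ reduces (with $r_{w_k}=1$) to the one required there. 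Invoking Theorem~\ref{thm:2} together with the equivalence $\|\cdot\|_{H^p(\overline{w})}\approx\|M_{N_0}(\cdot)\|_{L^p(\overline{w})}$ yields
\[
\|F\|_{L^p(\overline{w})}\leq\|M_{N_0}T(f_1,\ldots,f_m)\|_{L^p(\overline{w})}\lesssim\prod_{k=1}^m\|f_k\|_{H^{p_k}(w_k)}.
\]

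With this weighted estimate available uniformly in the choice of $A_1$ weights, Theorem~\ref{thm:xtpl} produces
\[
\|F\|_{L^\qq}\lesssim\prod_{k=1}^m\|f_k\|_{H^{q_k(\cdot)}}.
\]
Letting $R\to\infty$ and appealing to Fatou's lemma in variable Lebesgue spaces~\cite[Theorem~2.61]{CF13} gives $\|M_{N_0}T(f_1,\ldots,f_m)\|_{L^\qq}\lesssim\prod_{k=1}^m\|f_k\|_{H^{q_k(\cdot)}}$, which is the desired estimate $\|T(f_1,\ldots,f_m)\|_{H^\qq}\lesssim\prod_{k=1}^m\|f_k\|_{H^{q_k(\cdot)}}$ by the grand maximal function characterization of $H^\qq$. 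A standard density argument, using that finite sums of $(K_0,\infty)$ atoms are dense in each $H^{q_k(\cdot)}$ with respect to its quasi-norm, extends the inequality to arbitrary $f_k\in H^{q_k(\cdot)}$.

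The main technical point to attend to is the identification $s_{\overline{w}}=\floor{n(1/p-1)}_+$ for weights $w_k\in A_1$: this is precisely what reconciles the cancellation condition demanded by Theorem~\ref{thm:2} (vanishing moments up to order $s_{\overline{w}}$) with the weaker hypothesis~\eqref{eq.can} (vanishing moments up to $\floor{n(1/p-1)}_+$) imposed in the statement of Theorem~\ref{thm:Hp-Hp}. Once that is in hand, the remainder is a mechanical adaptation of the scheme used for Theorem~\ref{thm:Hp-Lp}.
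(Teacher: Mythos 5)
Your proposal follows exactly the route the paper intends: the paper's own proof of this theorem is a one-line pointer ("identical to the proof of Theorem~\ref{thm:Hp-Lp}, except that $T$ is replaced by $M_{N_0}T$ and Theorem~\ref{thm:2} replaces Theorem~\ref{thm:1}"), and your write-up correctly fills in that sketch. Your explicit verification that $\overline{w}\in A_1$ for $w_k\in A_1$ --- via H\"older with exponents $p_k/p$ and the pointwise $A_1$ bound --- and hence that $s_{\overline{w}}=\floor{n(1/p-1)}_+$, is the step the paper leaves implicit and is exactly what reconciles the cancellation hypothesis in the theorem statement with what Theorem~\ref{thm:2} requires. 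One small note: in the paper's definition of the family $\F$ for Theorem~\ref{thm:Hp-Lp} the truncation is written as $\max\{\,\cdot\,,R\}\chi_{B(0,R)}$, but this must be $\min\{\,\cdot\,,R\}\chi_{B(0,R)}$ for the paper's subsequent inequalities ($\|F\|_{L^p(\overline{w})}\le\|T(\ldots)\|_{L^p(\overline{w})}$ and $\|F\|_\qq\le R\|\chi_{B(0,R)}\|_\qq$) to hold; you used $\min$, which is the correct version.
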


\bibliographystyle{amsplain}

\begin{thebibliography}{10}

\bibitem{AJ80}
K.~Andersen and R.~T. John, \emph{Weighted inequalities for vector-valued
  maximal functions and singular integrals}, Stud. Math. \textbf{69} (1980),
  19--31.

\bibitem{BLYZ08}
M.~Bownik, B.~Li, D.~Yang, and Y.~Zhou, \emph{Weighted anisotropic {Hardy}
  spaces and their applications in boundedness of sublinear operators}, Indiana
  Univ. Math. J. \textbf{57} (2008), no.~7, 3065--3100.

\bibitem{CM75} R.~Coifman and Y.~Meyer, \emph{On commutators of singular integrals and bilinear singular integrals}, Trans. Amer. Math. Soc. \textbf{212} (1975), 315--331.

\bibitem{CM78} R.~Coifman and Y.~Meyer, \emph{Commutateurs d'int\'egrales singuli\'eres et op\'erateurs multilin\'eaires}, Ann. Inst. Fourier, Grenoble \textbf{28} (1978), 177--202.

\bibitem{CR16} J.~Conde-Alonso and G.~Rey, \emph{A pointwise estimate for positive dyadic shifts and some applications}, Math. Ann. \textbf{365} (2016), 1111-1135.

\bibitem{CruzUribe:2016ji}
D.~Cruz-Uribe,
 {\em Two weight inequalities for fractional integral operators and
  commutators},
 In F.~J. Martin-Reyes, editor, {\em VI International Course of
  Mathematical Analysis in Andalusia}, pp. 25--85. World Scientific,
2016.

\bibitem{CF13}
D.~Cruz-Uribe and A.~Fiorenza, \emph{Variable {Lebesgue} {Spaces}:
  {Foundations} and {Harmonic} {Analysis}}, Springer Basel, Basel, 2013.

% \bibitem{CM12}
% D.~Cruz-Uribe and F.~I. Mamedov, \emph{On a general weighted {Hardy} type
%   inequality in the variable exponent {Lebesgue} spaces}, Rev. Mat. Complut.
%   \textbf{25} (2012), no.~2, 335--367.

\bibitem{CMP11}
D.~Cruz-Uribe, J.~M. Martell, and C.~P\'erez, \emph{Weights, {Extrapolation} and
  the {Theory} of {Rubio} de {Francia}}, Springer Basel, Basel, 2011.

\bibitem{CMP12} D.~Cruz-Uribe, J.M.~Martell, C.~P\'erez, \emph{Sharp weighted estimates for classical operators}. Adv. Math. {\bf 229} (2012), no. 1, 408--441.

\bibitem{DCU-KMP17}
D.~Cruz-Uribe and K.~Moen, {\em A multilinear reverse H\"older
  inequality with applications to   multilinear weighted norm
  inequalities}, preprint, 2017, arXiv:1701.07800.

\bibitem{CN16}
D.~Cruz-Uribe and V.~Naibo, \emph{Kato-{Ponce} inequalities on weighted and
  variable {Lebesgue} spaces}, Differential Integral Equations (2016),
  801--836.

\bibitem{cruz-uribe-neugebauer95}
D.~Cruz-Uribe and C.~J. Neugebauer,
{\em The structure of the reverse {H}\"older classes},
Trans. Amer. Math. Soc., 347 (1995),  no. 8, 2941--2960. 


\bibitem{CW14}
D. Cruz-Uribe and D. Wang, \emph{Variable Hardy spaces}, Indiana Univ.
  Math. J. \textbf{63} (2014), 447--493.

\bibitem{duoandikoetxea01}
J.~Duoandikoetxea,
 {\em Fourier analysis}, volume~29 of {\em Graduate Studies in
  Mathematics},
 American Mathematical Society, Providence, RI, 2001.



\bibitem{FS72}
C.~Fefferman and E.~M. Stein, \emph{{$H^p$} spaces of several variables}, Acta
  Math. \textbf{129} (1972), no.~1, 137--193.

\bibitem{GC79}
L.~Garc\'ia-Cuerva, \emph{Weighted $H^p$ spaces}, Dissertationes Math. (Rozprawy Mat.) \textbf{162} (1979),
63 pp.

\bibitem{garcia-cuerva-rubiodefrancia85}
J.~Garc{\'{\i}}a-Cuerva and J.~L. Rubio~de Francia,
 {\em Weighted norm inequalities and related topics}, volume 116 of
  {\em North-Holland Mathematics Studies},
 North-Holland Publishing Co., Amsterdam, 1985.

\bibitem{grafakos08b}
L.~Grafakos,
 {\em Modern Fourier Analysis}, volume 250 of {\em Graduate Texts in
  Mathematics},
 Springer, New York, 2nd edition, 2008.


\bibitem{GH14}
L.~Grafakos and D.~He, \emph{Multilinear {Calder\'on-Zygmund} operators on
  {Hardy} spaces, {II}}, J. of Math. Anal. and Appl. \textbf{416} (2014),
  no.~2, 511--521.

\bibitem{GK01}
L.~Grafakos and N.~J. Kalton, \emph{Multilinear {Calder\'on-Zygmund} operators
  in {Hardy} spaces}, Collect. Math. \textbf{52} (2001), no.~2, 169--180.

\bibitem{GM04}
L.~Grafakos and J.M.~Martell, {\em Extrapolation of weighted norm
  inequalities for multivariable operators and applications},
J. Geom. Anal., \textbf{14} (2004), no.~1, 19--46.

\bibitem{GNNS17A}
L.~Grafakos, S.~Nakamura, H.V.~Nguyen, Y.~Sawano,
\emph{Conditions for Boundedness into Hardy Spaces},
preprint, 2017, arXiv:1702.02229v1.

\bibitem{GNNS17B}
L.~Grafakos, S.~Nakamura, H.V.~Nguyen, Y.~Sawano, 
\emph{Multiplier  conditions for   Boundedness    into Hardy spaces},
preprint, 2017, arXiv:08190v1.

\bibitem{GT02}
L.~Grafakos and R.~H. Torres, \emph{Multilinear {Calder\'on-Zygmund} theory},
  Adv. in Math. \textbf{165} (2002), no.~1, 124--164.

\bibitem{GrT02}
L.~Grafakos and R.~H. Torres, \emph{Maximal operator and weighted norm inequalities for multilinear singular integrals}, Indiana Univ. Math. J. \textbf{51} (2002), no.~5, 1261--1276.

\bibitem{LN15}
A.~Lerner and F.~Nazarov, \emph{Intuitive dyadic calculus: the basics}, preprint available at \textsf{arXiv:1508.05639}.

\bibitem{LOPTT09} A. Lerner, S. Ombrosi, C. P\'erez, R.H. Torres, R. Trujillo-Gonz\'olez, \emph{New maximal functions and multiple weights for the multilinear Calder\'on-Zygmund theory}. Adv. Math. {\bf 220} (2009), no. 4, 1222--1264.

\bibitem{LZ11}
G.~Lu and Y.~Zhu, \emph{Bounds of {Singular} {Integrals} on {Weighted} {Hardy}
  {Spaces} and {Discrete} {Littlewood}--{Paley} {Analysis}}, J. Geom. Anal.
  \textbf{22} (2011), no.~3, 666--684.


\bibitem{MSV08}
S.~Meda, P.~Sj{\"o}gren, and M.~Vallarino, \emph{On the $H^1-L^1$ boundedness
  of operators}, Proc. Amer. Math. Soc. \textbf{136} (2008), no.~8, 2921--2931.

\bibitem{M12}
K.~Moen, \emph{Sharp weighted bounds without testing or extrapolation}, Arch. Math. (Basel) {\bf 99} (2012), no.5, 457--466.

\bibitem{NS12}
E.~Nakai and Y.~Sawano, \emph{Hardy spaces with variable exponents and
  generalized {Campanato} spaces}, J. Funct. Anal. \textbf{262} (2012), no.~9,
  3665--3748.

\bibitem{Orlicz}
W.~Orlicz, 
\newblock {\em \"Uber konjugierte Exponentenfolgen},
\newblock { Stud. Math.}, \textbf{3}, 1931, 200--211.  


\bibitem{ST89}
J.-O. Str{\"o}mberg and A.~Torchinsky, \emph{Weighted {Hardy} spaces},
  Springer-Verlag, Berlin; New York, 1989, OCLC: 298624160.

\bibitem{Tan17}
J. Tan, \emph{Bilinear Calder\'on--Zygmund operators on products of variable Hardy spaces}, preprint.
  
\bibitem{XY12}
Q. Xue and J. Yan, \emph{Multilinear version of reversed H\"older inequality and its applications to multilinear Calder\'on-Zygmund operators}, J. Math. Soc. Japan \textbf{64} (2012), no.~4, 1053--1069.
  
\end{thebibliography}

\end{document}